\author{Frédéric Bihan\thanks{frederic.bihan@univ-smb.fr}}
\author{Tristan Humbert\thanks{tristanhumbert74@gmail.com}}
\author{Sébastien Tavenas\thanks{sebastien.tavenas@univ-smb.fr}}
\affil{Université Savoie Mont Blanc, CNRS}
\date{}
\title{New bounds for the number of connected components of fewnomial hypersurfaces} 
\theoremstyle{plain} 
\newtheorem{theo}{Theorem}[section]
\newtheorem*{theo*}{Theorem}
\newtheorem{cor}[theo]{Corollary}
\newtheorem{lem}[theo]{Lemma}
\newtheorem{prop}[theo]{Proposition}
\newtheorem{clm}[theo]{Claim}
\newtheorem{assump}[theo]{Assumption}
\newtheorem*{clm*}{Claim}
\theoremstyle{definition} 
\newtheorem{de}[theo]{Definition}
\newtheorem{ex}[theo]{Example}
\theoremstyle{remark} 
\newtheorem{rem}[theo]{Remark}
\DeclareMathOperator{\n}{\mathbb{N}}
\DeclareMathOperator{\p}{\mathbb{P}}
\DeclareMathOperator{\R}{\mathbb{R}}
\DeclareMathOperator{\RP}{\mathbb{P}\mathbb{R}}
\DeclareMathOperator{\Z}{\mathbb{Z}}
\DeclareMathOperator{\C}{\mathbb{C}}
\DeclareMathOperator{\CP}{\mathbb{P}\mathbb{C}}
\DeclareMathOperator{\A}{\mathcal{A}}
\let\S\relax
\DeclareMathOperator{\S}{\mathcal{S}}
\DeclareMathOperator{\signvar}{signvar}
\DeclareMathOperator{\RA}{\R^{\A}}
\DeclareMathOperator{\conv}{conv}
\DeclareMathOperator{\codim}{codim}
\DeclareMathOperator{\defeq}{\stackrel{\text{def}}{=}}
\DeclareMathOperator{\rk}{rk}
\DeclareMathOperator{\ch}{conv}
\DeclareMathOperator{\Log}{Log}
\DeclareMathOperator{\Diag}{Diag}
\newcommand{\bigzero}{\mbox{\normalfont\Large\bfseries 0}}
\newcommand{\mA}{{\mathrm{A}}}
\newcommand{\mAh}{\mA^{h}}
\newcommand{\mAF}{\mA_F}
\newcommand{\mAhF}{\mA^{h}_F}
\newcommand{\Ah}{\A^{h}}
\newcommand{\AF}{\A_F}
\newcommand{\AhF}{\A^{h}_F}
\newcommand{\mB}{\mathrm{B}}
\newcommand{\mC}{\mathrm{C}}
\newcommand{\mD}{\mathrm{D}}
\newcommand{\mG}{{\mathrm{G}}}
\newcommand{\mU}{{\mathrm{U}}}
\newcommand{\mV}{{\mathrm{V}}}
\newcommand{\mR}{{\mathrm{R}}}
\DeclareMathOperator{\MatrA}{\ensuremath{\mathcal{M}_{\A}}}
\DeclareMathOperator{\MatrAd}{\ensuremath{\mathcal{M}_{\A}^{*}}}
\DeclareMathOperator{\MatrF}{\ensuremath{\mathcal{M}_{\AF}}}
\begin{document}
\maketitle
\begin{abstract}
We prove that the number of connected components of a smooth hypersurface in the positive orthant of $\R^n$ defined by a real polynomial with $d+k+1$ monomials, where $d$ is the dimension of the affine span of the exponent vectors, is smaller than or equal to \(8 (d+1)^{k-1} 2^{\binom{k-1}{2}}\), improving the previously known bounds. We refine this bound for $k=2$ by showing that a smooth hypersurface defined by a real polynomial with $d+3$ monomials in $n$ variables has at most $\lfloor \frac{d-1}{2}\rfloor +3$ connected components in the positive orthant of $\R^n$. We present an explicit polynomial in $2$ variables with $5$ monomials which defines a curve with three connected components in the positive orthant, showing that our bound is sharp for $d=2$ (and any $n$).
Our results hold for polynomials with real exponent vectors.
\end{abstract}
{\bf Keywords:}
Real algebraic geometry, Fewnomial theory, $A$-discriminants, Polyhedral combinatorics
\tableofcontents

\section{Introduction}

Descartes' rule of signs implies that the number of positive roots of a real univariate polynomial is bounded by a function which only depends on its number of monomials. {\em Fewnomials Theory} developed by Khovanski{\u{\i}}~\cite{Khovanskii} generalizes this result for multivariate real polynomials and more general functions.

In its monograph~\cite{Khovanskii}, Khovanski{\u{\i}} considered smooth hypersurfaces in the positive orthant \(\R_{>0}^n\) defined by real polynomials with \(n+k+1\) monomials. He showed (\cite{Khovanskii}, Sec. 3.14, Cor. 4) that the sum of the Betti numbers (and so the number of connected components) of such a hypersurface is at most
\[
(2n^2-n+1)^{n+k}(2n)^{n-1}2^{\binom{n+k}{2}}.
\]
The breakthrough was that the bound is only depending on \(n\) and on the number of monomials (and so holds true whatever is the degree of the considered polynomial).
In fact he proved his bound for hypersurfaces defined by real polynomials with real exponent vectors, that is, for zero loci of finite real linear combination of monomials $x^a$ where $a \in \R^n$.
Even if this upper bound was expected to be far from optimal, only few quantitative improvements have been found. 
The upper bound on the number of connected components was first improved by Li, Rojas, and Wang~\cite{LRW03}, 
and Bihan, Rojas, and Sottile~\cite{BRS08}. In the latter paper the authors showed that a smooth hypersurface in \((\R_{>0})^n\) defined by a real polynomial with \(d+k+1\) monomials with real exponent vectors spanning a \(d\)-dimensional affine span has fewer than
\[
\frac{e+3}{4} 2^{\binom{k+1}{2}}2^d d^{k+1}
\]
connected components. The approach was based on Bihan and Sottile's improved bound on the number of positive solutions of a system supported on few monomials~\cite{BS07}.

In a subsequent work, Bihan and Sottile~\cite{BS09} improved again their bound. They show that the sum of Betti numbers of such an hypersurface was bounded by	
\[
\frac{e^2+3}{4} 2^{\binom{k}{2}} \sum_{i=0}^d \binom{d}{i} i^k
\]
which already gives the better upper bound
\[
(e^2+3)2^{\binom{k}{2}}d^k2^{d-3}.
\]
The approach was based on the stratified Morse theory which was developed in~\cite{Goresky}.

More recently, the authors in~\cite{FNR17} give a new bound which is in particular not anymore exponential in \(d\). Taking our notations, they showed that if \(f\) is a \(d\)-variate real polynomial of support of cardinal \(d + k+1\) and not lying in an affine hyperplane, then, for generic coefficients, the corresponding hypersurface of the positive orthant $(\R_{>0})^n$ has no more than
\[
\binom{d+k+1}{2}+ 2^{\binom{k-1}{2}}(d+2)^{k-1}
\]
connected components. Furthermore, for \(k=2\), a sharper upper bound of \(\frac{(d+3)(d+2)}{2} + \lfloor \frac{d+5}{2}\rfloor\) is given. 

However, it seems that there is a problem in their proof since they claim (Proposition 3.4 in~\cite{FNR17}) that the number of non-simplicial faces of a \(d\)-dimensional polytope with \(d+k+1\) vertices is at most \(k\). In particular, a cube is a \(3\)-dimensional polytope (giving \(k=4\)) with already \(6\)
faces (the \(2\)-dimensional faces) which are non-simplicial. The situation happens to be even more problematic, since using Lawrence polytopes we can prove (see Proposition~\ref{Lawrence}) that the number of non-defective faces can be exponential in \(k\).\footnote{In fact in a personal communication with Maurice Rojas (one of the 
authors of~\cite{FNR17}), we learned that the authors are aware of the 
error. They presented a corrected version of their result at the MEGA 
2022 conference. It seems they will be releasing a revised version of 
their paper. However, they would now have a bound in 
\(O((k+1)(d+2)^{\max(0,(k^2+3k-1)/2)}\), which is exponential in \(k^2 
\log(d)\) and is therefore significantly worse than our bound which is 
exponential in \(k\max (k,\log d)\).}

\subsubsection*{Our results}

In this paper, we improve the previous upper bounds on the number of connected components of a smooth hypersurface of the positive orthant. We follow the approach developed by Gel'fand, Kapranov, and Zelevinsky~\cite{GKZ}. Let $\A$ be a finite set in $\R^{n}$ which spans an affine space of dimension \(d\). The cardinal of \(\A\) is written as \(d+k+1\). The parameter \(k\) is called the codimension of \(\A\). A \emph{generalized polynomial} with support in $\A$ is a (finite) linear combination $f=\sum_{a \in \A} c_a x^a$. We will often say that $f$ is a polynomial for shortness (even if the exponents vectors $a$ are real vectors) and we will sometimes say that $f$ is a $\A$-polynomial to indicate that its support is contained in $\A$. We will mostly take real coefficients $c_a$ and will denote by $V_{>0}(f)$ the zero locus of $f$ in the positive orthant of $\R^n$.
We give here our main results (see the cited results to get stronger versions with more precise hypotheses). 

Our new bound on the number of connected components of $V_{>0}(f)$ in the general case is again polynomial in \(d\) when \(k\) is fixed.
\begin{theo*}[Theorem~\ref{thm_generic_bound}]
	Let $\A$ be a finite set in $\R^{n}$ of dimension \(d\) such that $\codim \A= k$. 
	For any real polynomial $f=\sum_{a\in \A}c_a x^{a}$, we have
	\begin{equation*}
	b_{0}(V_{>0}(f)) \leq  1+k+\frac{e^2+3}{8(m+k+2)} \sum_{j=0}^{k-1} \binom{d+k+2}{k-j}  2^{\binom{j}{2}} (d+1)^{j}.
	\end{equation*}
	The bound can be replaced by the weaker but simpler expression
	\begin{equation*}
		b_{0}(V_{>0}(f)) \leq 8 (d+1)^{k-1} 2^{\binom{k-1}{2}}.
	\end{equation*}
\end{theo*}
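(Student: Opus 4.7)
The plan is to combine a reduction of the ambient dimension with a Morse-theoretic bound on connected components, together with a sharpened fewnomial count obtained via Gale duality. First, I would exploit that the affine span of $\A$ has dimension $d$: choosing affine coordinates on this span yields a monomial surjection $\pi\colon(\R_{>0})^n\twoheadrightarrow(\R_{>0})^d$ together with a polynomial $\widetilde f$ in $d$ variables whose support is affinely spanning, and such that $V_{>0}(f)=\pi^{-1}(V_{>0}(\widetilde f))$. The fibers of $\pi$ are connected tori of dimension $n-d$, so $\pi$ induces a bijection on connected components and reduces the problem to bounding $b_0(V_{>0}(\widetilde f))$ for a $d$-variate polynomial with $d+k+1$ monomials of full affine dimension.

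Next, I would apply Morse theory to a generic coordinate projection $\phi=x_1\colon V_{>0}(\widetilde f)\to\R_{>0}$. Every bounded connected component of the smooth hypersurface carries a local extremum of $\phi$, while the unbounded components correspond to ends approaching the boundary of $(\R_{>0})^d$, whose number is controlled by the combinatorics of the Newton polytope of $\widetilde f$ and absorbed in the additive $1+k$ term. The local extrema are positive solutions of the square system $\widetilde f=\partial_2\widetilde f=\cdots=\partial_d\widetilde f=0$, and since all $d$ equations have support contained in $\A$, this fewnomial system involves at most $d+k+1$ distinct monomials.

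The heart of the argument is then a sharp bound on the positive solutions of this system. A direct application of the bound in~\cite{BS09} would introduce an unwanted factor $2^d$; instead I would use the Horn--Kapranov / Gale dual uniformization. The common support of cardinal $d+k+1$ allows reparametrizing positive solutions as positive points of a master-function system living in the reduced discriminant space, a space of dimension $k-1$. A Khovanski\u\i--Rolle style argument applied in this low-dimensional setting then produces the factor $(d+1)^{k-1}\,2^{\binom{k-1}{2}}$ with no exponential dependence on $d$, matching the announced simplified bound after collecting the boundary contribution.

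The main obstacle I expect is the exact accounting of unbounded components. Morse theory on the non-compact variety $V_{>0}(\widetilde f)$ does not by itself give a bound on $b_0$ in terms of critical points, so one must choose the direction of $\phi$ generically enough that each end meets a coordinate hyperplane transversally, and then bound the number of such ends by the number of facets of the Newton polytope supporting a nonempty arc of $V_{>0}(\widetilde f)$. Ensuring that each connected component is captured exactly once by the Morse count together with this boundary contribution, and then performing the binomial simplification, should yield both the precise inequality and its cleaner form $8(d+1)^{k-1}2^{\binom{k-1}{2}}$.
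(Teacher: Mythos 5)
Your overall instinct---reduce to $d$ variables, count critical points of some kind, then control boundary contributions---is sound, but the route you take via a linear Morse function on $V_{>0}(\widetilde f)$ cannot recover the stated bound, and in fact this is not the approach the paper uses.

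The key quantitative point you miss concerns the dimension of the Gale dual. The system $\widetilde f=\partial_2\widetilde f=\cdots=\partial_d\widetilde f=0$ is a square system of $d$ equations in $d$ variables with common support $\A$ of cardinality $d+k+1$; its Gale dual therefore lives in a $k$-dimensional affine space, and the Bihan--Sottile estimate (Theorem~\ref{thm:BSGale}) would give a count of order $d^{k}2^{\binom{k}{2}}$. That is essentially the bound of~\cite{BS09}, not the improvement announced in Theorem~\ref{thm_generic_bound}. The paper instead considers a \emph{Viro deformation} $f_t=\sum_a c_a t^{h_a}x^a$ and bounds the number of values $t_0$ at which a singularity appears, in $(\R_{>0})^n$ or on a face of the Newton polytope. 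The corresponding critical system~\eqref{S} is square in $n+1$ variables $(x,t)$ with the same $n+k+1$ monomials, and for a \emph{compatible} height function $h$ the augmented exponent matrix $\mAh$ has rank one higher than $\mA$, so the Gale dual dimension drops to $k-1$ (Corollary~\ref{boundSF}). This single saved dimension is where the improvement $d^{k}\rightsquigarrow d^{k-1}$, $2^{\binom{k}{2}}\rightsquigarrow 2^{\binom{k-1}{2}}$ comes from, and it is precisely what your Morse-function count cannot produce, since no auxiliary variable with a genericity condition like compatibility enters your system.

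Two further gaps: first, you never account for singularities ``at infinity,'' i.e.\ on proper faces of the Newton polytope, which the paper handles by summing over all non-defective faces (hence the binomial sum with the $\binom{d+k+2}{k-j}$ factors, controlled by Proposition~\ref{prop:number_nd_faces}); in the Morse picture these show up as critical points escaping to the boundary of the orthant and require a stratified treatment as in~\cite{BS09}, which you only gesture at. Second, your ``absorbed in the additive $1+k$ term'' assertion about unbounded components is unjustified; the paper obtains the clean additive bound $n_0,n_{+\infty}\leq k+1$ on the endpoint topology via the combinatorial Viro patchworking Theorem~\ref{patchwork}, a genuinely non-trivial ingredient rather than a boundary count of ends. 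Repairing your argument would essentially amount to redoing the paper's deformation strategy.
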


In fact we have a better estimate where $d$ is replaced by the dimension $m \leq d$ of the basis of $\A$, see Theorem~\ref{thm_generic_bound}.
The approach of the proof is close to that of~\cite{FNR17}. We see the polynomial $f=\sum_{a\in \A}c_a x^{a}$ in a family 
$f_t=\sum_{a\in \A}c_a t^{h_a}x^{a}$ parametrized by a positive parameter $t$, where $h=(h_a)_{a \in \A}$ is a sufficiently generic collection of real numbers. Such polynomials are often called \emph{Viro} polynomials. The starting polynomial $f$ is obtained by setting $t=1$.
 We analyze the set $T$ of values $t \in \left]0,+\infty \right[ $ for which the hypersurface defined by $f_{t}$ admits a singularity in the positive orthant or "at infinity". A point $x$ with non-zero coordinates is a singular point of the hypersurface defined by $f_t$ if and only if it is solution to the \emph{critical system}
 \begin{equation}
 \label{critIntro}
 f_t=x_1\frac{\partial f_t}{\partial x_1} = \cdots =x_n\frac{\partial f_t}{\partial x_n}=0.
 \end{equation}
 A singularity at infinity is a singularity in the corresponding positive orthant of the hypersurface defined by the truncation of $f_t$ to some proper face of the Newton polytope $Q$ of $f$. Only \emph{non-simplicial} or, more generally, \emph{non-defective} faces need be taken into account. Indeed the \emph{discriminant variety} corresponding to a defective face is either empty or has codimension at least two and so can be avoided by a generic path $(f_t)_{t>0}$. This leads us to consider critical systems associated to non-defective faces of $Q$. When $h=(h_a)_{a \in \A}$ is  sufficiently generic, then only one singularity appears for each $t_0 \in T$, this singularity is an ordinary double point of the corresponding hypersurface (by a result of Forsg{\aa}rd \cite{For19}) and the number of connected components of $V_{>0}(f_t)$
 varies at most by one when $t$ passes through $t_0$. We are therefore reduced to estimate the cardinality of $T$, the number of connected components
 of $V_{>0}(f_t)$ for $t>0$ small enough and for $t>0$ large enough.
 
 In order to estimate the cardinality of $T$, we use matroid theory and correct the bound on the number of non-simplicial faces of a polytope which was given in~\cite{FNR17}. Then, for each such a face, we give an upper bound for the number of positive solutions of the corresponding critical system. The bound we use are essentially those obtained in~\cite{BS07} using so-called Gale dual systems. Interestingly enough, we also get sign conditions on the coefficients $c_a$ which are necessary for the existence of positive solutions of the critical system, equivalently, for the existence of singular points in the positive orthant for the corresponding hypersurface (see Proposition \ref{signs}).
 
 In order to estimate $b_0(V_{>0}(f_t))$ for $t>0$ small or large enough, we use the version in the positive orthant of Viro's patchworking Theorem (see, for instance, \cite{Vir84}, \cite{Vir06}, \cite{Ris92}, \cite{GKZ}) which allows to compute the topological type (and thus the number of connected components) of $V_{>0}(f_t)$ for $t>0$ small enough.
Though the original Viro's patchworking Theorem is stated for integer exponent vectors $\A$, in other words, for Laurent polynomials,
it is clear from the proof that the version in the positive orthant works equally well when $\A$ is a finite subset of $\R^n$ (see Proposition \ref{P:Patchwork}).
This enables us to prove the following intermediate result which we believe could be interesting for itself.
\begin{theo*}[Theorem~\ref{patchwork}] Assume that \(h=(h_a)_{a \in \A}\in \R^{\A}\) is generic enough. There exists $t_0>0$ such that for every $t\in \left]0,t_0\right]$, $f_t = 0$ has at most \(k+1\) connected components in $(\R_{>0})^n$.
If \(n \geq 2\) and \(k \geq 2\), then the bound can even be improved to \(k\).
\end{theo*}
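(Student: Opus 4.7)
The strategy is to apply the positive-orthant version of Viro's patchworking theorem (Proposition \ref{P:Patchwork}): for generic $h$ and $t$ below some $t_0 > 0$, $V_{>0}(f_t)$ is homeomorphic to a combinatorial patchwork $V_h$ built from the regular triangulation $\Sigma_h$ of $\A$ induced by $h$ (which, for generic $h$, uses all $d + k + 1$ points of $\A$ as vertices) and the signs $\varepsilon_a = \sign(c_a)$. For each $d$-simplex $\sigma$ of $\Sigma_h$, the piece attached to $\sigma$ is, after a monomial change of variables, the positive part of an affine hyperplane in $d$ variables, hence either empty (if all $\varepsilon_a$ with $a \in \sigma$ coincide) or a single $(d-1)$-disk. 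Pieces in two adjacent $d$-simplices glue along their common facet exactly when that facet is itself mixed; thus $V_h$ is a $(d-1)$-manifold with boundary on $\partial\conv(\A)$, and $b_0(V_{>0}(f_t))$ equals the number of connected components of this mixed subcomplex of $\Sigma_h$.

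The natural approach to bounding this count is induction on $k$. The base case $k = 0$ is immediate: $\Sigma_h$ is a single $d$-simplex, so $V_h$ is empty or a single $(d-1)$-disk, giving $b_0 \leq 1 = k+1$. For the inductive step, I would pick a vertex $a^* \in \A$ and pass to the patchwork $V_{h'}$ associated to $\A' := \A \setminus \{a^*\}$ with a height function $h'$ inducing a triangulation that coincides with $\Sigma_h$ outside the star of $a^*$; this is achieved by pushing $h_{a^*}$ to $+\infty$. A direct combinatorial analysis of how $V_h$ and $V_{h'}$ differ inside the star shows that $b_0(V_h) \leq b_0(V_{h'}) + 1$. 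When $a^*$ can be chosen so that $\A'$ still spans a $d$-dimensional affine space we have $\codim \A' = k - 1$, and the induction hypothesis yields $b_0(V_h) \leq k + 1$. The degenerate situation where removing any point of $\A$ drops the affine dimension is handled by a direct reduction to a lower-dimensional patchwork.

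The main obstacle, and the source of the improvement from $k+1$ to $k$ when $n \geq 2$ and $k \geq 2$, is a topological refinement of the inductive step. Since $\conv(\A)$ is a $d$-dimensional disk with $d \geq 2$, the patchwork $V_h$ is a $(d-1)$-manifold whose boundary lies on $\partial\conv(\A)$, and the number of sign changes of $\varepsilon$ along each $1$-dimensional edge of $\conv(\A)$ must be even. Combined with a finer analysis of the star of $a^*$, this parity constraint should force at least one of the $k$ local modifications in the induction to produce no new component, yielding the sharper bound. Verifying that this saving of one component is always available—handling in particular the base $k = 2$ where the naive induction would only give $k + 1 = 3$—is the delicate technical part of the argument.
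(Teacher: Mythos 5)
Your overall framing via Proposition~\ref{P:Patchwork} is correct, and the identification $b_0(V_{>0}(f_t)) = b_0(L)$ for small $t$ matches the paper. But the core of the argument takes a genuinely different route than the paper, and the route as presented has a real gap. The paper does not induct on $k$ by deleting points of $\A$; instead it builds the \emph{dual graph} $G$ whose vertices are the chambers of $Q\setminus L$, shows (via the Jordan-type separation property of each component of $L$, items 3 and 4 of Lemma~\ref{L:basic}) that $G$ is a tree, and then inducts on the number of vertices $\kappa$ of a pure $n$-dimensional subcomplex $\mathcal{K}$ of $\tau$, peeling off a \emph{leaf chamber} at each step. Peeling a leaf is tightly controlled: it removes exactly one component of $L$ and at least one vertex of $\A$, so the induction goes through.

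The step you do not justify is the crux of your argument: that pushing $h_{a^*}\to+\infty$ and re-triangulating the star of $a^*$ yields $b_0(V_h) \leq b_0(V_{h'}) + 1$. This is not a local operation: the star of $a^*$ can be large, its re-triangulation can touch many maximal simplices at once, and the intersection of the patchwork with the link of $a^*$ can have several boundary arcs whose pairings inside the star change arbitrarily when the interior triangulation changes. There is no obvious reason the component count changes by at most one, and nothing analogous to the tree structure is in place to control it. Without a proof of this inequality the induction does not close. (Incidentally, your worry about the ``degenerate case'' where removing any point drops the affine dimension is vacuous once $k\geq 1$: an affine dependence among the points of $\A$ always exhibits some $a^*$ lying in the affine hull of the rest.)

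The improvement from $k+1$ to $k$ for $n\geq2$, $k\geq2$ is also handled very differently in the paper. It is not a parity-of-sign-changes argument along edges; rather, the paper runs the leaf-peeling induction down to the small base cases $\kappa=0,1,2$ and shows in the case $\kappa=2$ that achieving $b_0=3$ would force the dual tree $G$ to be a three-pointed star whose central chamber has $n$ vertices spanning a hyperplane, which would require three $n$-simplices with pairwise disjoint interiors sharing a common $(n-1)$-face — a geometric impossibility. Your parity heuristic, as stated, does not touch this configuration and would need substantial development before it could substitute for the paper's argument. In summary: the setup is right, but both the inductive step giving $k+1$ and the refinement to $k$ are asserted rather than proved, and neither is easy along the route you propose; the paper's dual-tree viewpoint is what makes both steps tractable.
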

Note that replacing $t$ by $1/t$ leads to the same bound for $t>0$ large enough.

Then, we focus on the codimension \(2\) case. When $k=2$ a finer analysis of the Gale dual of a critical system and arguments from matroid theory combined with Descartes bounds obtained in~\cite{BD} for the number of positive solutions of a system supported on a circuit enable us to prove the following result.
\begin{theo*}[Theorems~\ref{Fewnomial boundbis} and~\ref{boundusingcircuits}]
	If \(\A\) is a finite set of \(\R^n\) of dimension \(d\) and codimension \(2\), then $b_{0}(V_{>0}(f)) \leq \left\lfloor \frac{d-1}{2} \right\rfloor+3$.
\end{theo*}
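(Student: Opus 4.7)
The plan is to specialize the Viro-path strategy used in Theorem~\ref{thm_generic_bound} to $k=2$, keeping careful track of the combinatorics that become more tractable in codimension~$2$. I would embed the given polynomial $f$ as $f_{1}$ in the Viro family $f_{t} = \sum_{a \in \A} c_{a}\, t^{h_{a}} x^{a}$ for a sufficiently generic $h \in \R^{\A}$. By the patchworking bound (Theorem~\ref{patchwork}) applied with $k=2$, the hypersurface $V_{>0}(f_{t})$ has at most $k=2$ connected components for $t>0$ small enough (and, after replacing $t$ by $1/t$, for $t>0$ large enough); in the low-dimensional edge case $n=1$ the bound becomes $k+1=3$, but this case reduces to the univariate Descartes bound and is easy to handle separately. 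Between these two regimes the topology of $V_{>0}(f_{t})$ can change only at values of $t$ in a finite set $T$, and by Forsg\aa rd's theorem the generic choice of $h$ ensures that each such event is an ordinary double point in the positive orthant or at infinity along a non-defective face, changing $b_{0}$ by at most~$1$.

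The heart of the proof is therefore the bound
\[
b_{0}(V_{>0}(f)) \;\leq\; 2 + \bigl\lfloor |T|/2 \bigr\rfloor,
\]
obtained by running the continuous deformation in from both $t \to 0^{+}$ and $t \to +\infty$ and taking the better of the two counts. To conclude, it suffices to establish $|T| \leq d+1$, so that $\bigl\lfloor |T|/2 \bigr\rfloor \leq 1 + \lfloor (d-1)/2 \rfloor$. The set $T$ decomposes as a disjoint union over non-defective faces $\Gamma$ of the Newton polytope $Q$ (including $\Gamma = Q$) of the positive solutions of the critical system~\eqref{critIntro} associated to the truncated polynomial $f_{t}|_{\Gamma}$, and each such solution can be viewed as a positive root of a Gale dual system as developed in \cite{BS07,BD}.

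For codimension~$2$ these Gale duals live in a $2$-dimensional setting, and this is where I would use the two separate arguments reflected in the split between Theorem~\ref{Fewnomial boundbis} and Theorem~\ref{boundusingcircuits}. First, I would use matroid theory to list the possible non-defective faces of $Q$: a face $\Gamma$ is non-defective only when its vertex set has codimension $\geq 1$ inside $\A$ and contains (or is) a circuit. In codimension~$2$ such faces come from circuits of $\A$ (which have at most $d+2$ elements) and from the full support $\A$ itself. For each non-circuit face the Gale dual essentially falls back to codimension~$\leq 1$, and a direct Descartes-type count bounds its contribution. For each circuit face, I would apply the sharp Descartes bound of Bihan--Dickenstein for critical systems supported on a circuit, which replaces the crude fewnomial bound by a count linear in the size of the circuit. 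Summing these contributions over all non-defective faces and exploiting the fact that the circuits of an $\A$-configuration of codimension~$2$ are highly constrained (their supports together cover $\A$ in a controlled way), one gets the desired estimate $|T| \leq d+1$.

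The main obstacle I expect is not the per-face bound, which is essentially bookkeeping once the circuit structure is fixed, but rather the global combinatorial control: naively summing per-face Descartes bounds over all non-defective faces overcounts, because the same singular point may be attributed to several faces simultaneously, and the discriminantal condition that selects those $t_{0} \in T$ is one equation in $t$ shared by all faces. The key is to translate the problem into a single Gale dual system for the full $\A$ of codimension~$2$ and exploit the planar geometry of its Gale transform, so that the total count of sign changes along the family $f_{t}$ is bounded by an arrangement-of-curves argument rather than a sum over faces. Once this consolidation is carried out, the bound $|T| \leq d+1$, and hence $b_{0}(V_{>0}(f)) \leq \lfloor (d-1)/2 \rfloor + 3$, follows.
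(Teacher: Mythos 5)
Your overall skeleton — a Viro path $f_t$, extremal-$t$ counts via patchworking, and a count of critical values $t\in T$ — is the right one and matches the paper's framework. But the proposal breaks down exactly at the step you identify as the ``heart'': proving $|T| \leq d+1$. With a generic height function $h$ (as you propose), the best one can get from the paper's ingredients is $|T| \leq m+2$ (where $m = \dim\A'$), not $m+1$. Here is the accounting. In codimension $2$, every non-defective proper face $\A_F$ of the basis $\A'$ is a circuit, and each contributes at most one element to $T$; the top face contributes at most $\signvar(b_{u_0},\ldots,b_{u_s}) \leq s$ elements. Writing $N$ for the number of circuits of dimension $<m$ and using $N + |\A'/{\sim}| \leq m+3$ (Proposition~\ref{number of circuits}), one gets $|T| \leq N + s = N + |\A'/{\sim}| - 1 \leq m+2$, hence $b_0 \leq \lfloor m/2\rfloor + 3$, which is off by one from $\lfloor (m-1)/2\rfloor+3$ whenever $m$ is even. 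The ``arrangement-of-curves'' consolidation you gesture at is never spelled out and does not correspond to anything that would close this gap.

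The paper closes the gap precisely by \emph{not} taking $h$ generic. It picks an equivalence class $u\in\A'/{\sim}$, a representative $\alpha\in u$, and the height vector $h^\alpha$ supported at $\alpha$. This has two compounding effects that your approach misses: first, for any face $F$ not containing $\alpha$, the truncation $f_t^F$ does not depend on $t$, so only circuits through $\alpha$ (at most $N_\alpha \leq N$ of them) can contribute to $T\setminus T_{Q'}$; second, because $b\in\ker\mAh$ forces $b_\alpha=0$ and hence $b_\ell=0$ for all $\ell\sim\alpha$, the aggregated coordinate $b_u$ vanishes, so the sign-variation count for $T_{Q'}$ drops to $s-1$. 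Together with $N_\alpha + s \leq m+3-|u|$ this yields $|T|\leq m+1$ (even better if $|u|>1$). The price is that $h^\alpha$ is not compatible and Assumption~\ref{A:comb} fails, so one cannot invoke the combinatorial patchworking bound for the extremal counts $n_0, n_{+\infty}$; the paper instead proves the dedicated Proposition~\ref{non patchwork} to show $n_0, n_{+\infty}\leq 2$ for this non-generic $h$ in codimension $2$. Your proposal relies on combinatorial patchworking and therefore implicitly needs generic $h$, which is incompatible with getting the stronger count on $|T|$. So the gap is genuine: you have neither identified the degenerate-height trick that wins the final factor of one, nor the substitute extremal bound (Proposition~\ref{non patchwork}) that this trick requires.
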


This bound is optimal for \(d=2\), indeed we observe that the bivariate polynomial $f(x,y) = 1+ x^{4}-xy^{2}-x^{3}y^{2}+0.76x^{2}y^{3}$ has three connected components in its positive zero set (see Theorem~\ref{sharp n=2}).

\subsubsection*{Warm-up: the codimension \texorpdfstring{\(0\)}{0} and \texorpdfstring{\(1\)}{1} cases}
 
We detail the general strategy explained above a bit more now in the
codimension $0$ and codimension $1$ easy cases.
We decided to keep the objects intuitive in this warm-up to help the reader to understand the idea of the proof. The tools we use will be defined more formally in the next sections.

Let \(\A =\{a_0,\ldots, a_{d+k}\} \subset \R^n\) of dimension \(d\) and let \(f\) be a $\A$-polynomial. The case  \(k=0\) arises when $\A$ is the set of vertices of a simplex
and an appropriate monomial change of variables with real exponent vectors transforms the positive zero set of \(f\) into an affine hyperplane. Such a monomial change of coordinates gives rise to a diffeomorphism from the positive orthant to itself. In particular, the hypersurface defined by $f$ is never singular, and its positive part is either empty (when all coefficients of $f$ have the same sign) or diffeomorphic to the positive part of an affine hyperplane. 

A finite set $\A \subset \R^{n}$ for which there exists $a \in \A$ such that $\A \setminus \{a\}$ is contained in some affine subspace of dimension strictly smaller than $d=\dim \A$ is called \emph{a pyramid}. Note that if $\A$ has codimension $0$ then it is a pyramid. Pyramidal sets provide another important family of sets $\A$ with the property that the zero set of any polynomial $f=\sum_{a \in \A} c_{a}x^{a}$ is non-singular. This follows from the fact that if $\A$ is pyramidal then via a monomial change of coordinates $f$ can be transformed into a polynomial of the form $x_{n}+g(x_{1},\ldots,x_{n-1})$.

The case \(k=1\) has been treated in \cite{BHDR} and \cite{BRSte} following the general strategy used in this paper.
\begin{prop}
	Let \(\A =\{a_0,\ldots, a_{d+k}\} \subset \R^n\) of dimension \(d\). Let \(f\) be a $\A$-polynomial with non singular positive zero set \(V_{>0}(f)\).
	\begin{itemize}
		\item If \(k=0\) then \(V_{>0}(f)\) is either empty or connected and non-compact.
		\item If \(k=1\) then \(V_{>0}(f)\) has at most \(2\) connected components.
	\end{itemize}
\end{prop}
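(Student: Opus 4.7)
For the \(k=0\) case I would reduce to the study of an affine hyperplane via a monomial change of variables. Since \(|\A|=d+1\) and \(\dim \A=d\), the points of \(\A\) are affinely independent. Choosing a reference point \(a_0\in\A\) and factoring out the positive monomial \(x^{a_0}\), we have \(V_{>0}(f)=V_{>0}(g)\) where
\[
g(x) = c_{a_0} + \sum_{i=1}^{d} c_{a_i}\, x^{a_i-a_0}.
\]
The monomial map \(\phi\colon \R_{>0}^n\to \R_{>0}^d\), \(x\mapsto (x^{a_1-a_0},\ldots,x^{a_d-a_0})\), becomes a surjective linear map in logarithmic coordinates by affine independence of \(\A\); hence \(\phi\) is an open submersion with connected \((n-d)\)-dimensional fibers. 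Writing \(H\) for the affine hyperplane in \(\R^d\) cut out by \(g\) viewed as a function of the \(y_i=x^{a_i-a_0}\), we have \(V_{>0}(f)=\phi^{-1}(H\cap\R_{>0}^d)\). The slice \(H\cap\R_{>0}^d\) is empty precisely when all the coefficients \(c_a\) share a common sign, and otherwise is a non-empty open convex subset of \(H\), hence connected; a standard open-map argument combined with connectedness of the fibers of \(\phi\) then yields connectedness of \(V_{>0}(f)\). Non-compactness is immediate when \(n>d\) (the fibers of \(\phi\) are unbounded), and when \(n=d\geq 2\) it follows from the fact that the closure in \(\R^d\) of a non-empty affine hyperplane slice of \(\R_{>0}^d\) necessarily meets a coordinate hyperplane.

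For \(k=1\) the proposition is proved in \cite{BHDR,BRSte} by following the general strategy elaborated later in the paper. Using matroid theory, a codimension-\(1\) support is either a pyramid or a circuit. In the pyramid case, a monomial change of variables writes \(f\) in the form \(y_n + g(y_1,\ldots,y_{n-1})\), so \(V_{>0}(f)\) is the graph of \(-g\) over \(U=\{g<0\}\cap\R_{>0}^{n-1}\); one can iterate, since the support of \(g\) is again of codimension \(1\) but of strictly smaller dimension, reducing eventually either to the univariate \(k=1\) case (a trinomial, which trivially has at most two positive roots) or to the genuinely new circuit case.

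For a circuit, I would introduce the Viro deformation \(f_t=\sum_{a\in\A} c_a t^{h_a}x^a\) with \(h\) generic. By Theorem~\ref{patchwork} specialized to \(k=1\), \(V_{>0}(f_t)\) has at most two connected components for \(t>0\) sufficiently small, and the same bound holds as \(t\to+\infty\) by the \(t\leftrightarrow 1/t\) symmetry. Between these regimes \(b_0(V_{>0}(f_t))\) can change only at the finitely many parameters \(t\) for which \(f_t\) acquires a singularity in the positive orthant (a solution of the critical system~\eqref{critIntro}) or on a non-defective face of the Newton polytope, and each such wall-crossing changes \(b_0\) by at most one thanks to Forsg{\aa}rd's ordinary double-point description~\cite{For19}. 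A Gale-dual analysis, in the spirit of~\cite{BS07}, then shows that for a codimension-\(1\) circuit at most one wall-crossing can occur for generic coefficients, so \(b_0(V_{>0}(f))\leq 2\). The main obstacle I would anticipate is this Gale-dual step, which is the technical content borrowed from \cite{BHDR,BRSte}.
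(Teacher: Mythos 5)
For $k=0$ your argument matches the paper's: reduce to an affine hyperplane by a monomial change of coordinates; your added details on surjectivity of $\phi$ and connectedness of its fibers are correct. For the $k=1$ circuit case you follow the paper's strategy (Viro deformation, extremal patchworking bound $\leq 2$, at most one critical $t$ because the Viro critical system has codimension-zero support). Also note that the conclusion $b_{0}\leq 2$ does not come from adding a wall-crossing increment to the extremal bound: since $t=1$ is not a critical value, $V_{>0}(f)$ is simply homeomorphic to one of the two extremal level sets.

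The gap is in your pyramid reduction. After writing $f=y_{d}+g(y_{1},\ldots,y_{d-1})$ you correctly get $V_{>0}(f)\cong\{g<0\}\cap\R_{>0}^{d-1}$, but iterating this identification tracks $b_{0}$ of a \emph{sublevel set}, not of a zero set. When the recursion lands on the circuit base case, you must therefore bound $b_{0}(\{g<0\})$, whereas the Viro-deformation argument you invoke for circuits controls $b_{0}(V_{>0}(g))$ — a genuinely different quantity (and in fact $V_{>0}(g)$ need not even be smooth: $\partial f/\partial y_{d}\equiv 1$ makes $V_{>0}(f)$ smooth regardless of $g$, so the hypothesis of the proposition does not propagate down the recursion). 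The paper sidesteps all of this by applying the Viro deformation directly to $f$, pyramid or not: by Corollary~\ref{cor:pyramidal} the only face contributing a nontrivial critical system is the circuit face $Q'$, this system has at most one solution, and Theorem~\ref{patchwork} bounds the two extremal regimes by $2$. Your proposal would be correct if you dropped the pyramid/circuit dichotomy and ran that uniform argument on $f$.
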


Write \(f(x) = \sum_{a \in \A} c_a x^a\).  Let \(h: \A \rightarrow \R\) be a generic function and consider the associated Viro polynomial \(f_t(x) = \sum_{a \in \A} c_at^{h_a} x^a\). Assume that the positive zero set \(V_{>0}(f)\)  is non-singular.

From Viro's patchworking Theorem the topology of \(V_{>0}(f_t)\) is well understood when \(t\) is small or large enough. We want to understand the topology of \(V_{>0}(f_t)\) when \(t=1\). To do that it is sufficient to get \(t\) moving and focus when the topology changes. Using stratified Morse theory for instance (see the monograph~\cite{Goresky}), it can be seen that the topology can change only when a singularity appears, either in the positive orthant or at "infinity". As already explained before, a singularity in the positive orthant corresponds to a positive solution of the critical system \(f_t = x_1 \partial f_t / \partial x_1 = \cdots = x_n\partial f_t /\partial x_n = 0\), while a singularity at infinity corresponds to a positive solution of the critical system associated to the truncated polynomial \(f_t^F = \sum_{a \in \A \cap F} c_a t^{h(a)} x^a\) (where \(F\) is a face of \(Q = \conv(\A)\)). To highlight this fact, let us see it on an example which was given by Forsg{\aa}rd, Nisse, and Rojas (Example 3.7 in~\cite{FNR17}). Taking \(\A = \{(0,0),(1,0),(2,0),(0,1),(0,2)\}\), the circle defined by \(f_0=(x+1/2)^2+(y-2)^2 -1=0\) intersects the positive orthant whereas the curve \(f_1 = (x+3/2)^2+(y-3/2)^2-1=0\) does not intersect the positive orthant (see Figure~\ref{exFNR}). However, the parameterized polynomial \( f_t = (x+1/2+t)^2+(y-(4-t)/2)^2-1\) connects \(f_0\) to \(f_1\) and the hypersurface defined by $f_{t}$ has no singular point in the positive orthant for \(t \in [0,1]\). In fact the change of topology comes from a singularity which appears in the \(y\)-axis. Indeed, if we consider the face $F$ of $Q$ given by the convex hull of \(\{(0,0),(0,1),(0,2)\}\), we can notice that the hypersurface defined by the corresponding truncated polynomial \(f_t^F = y^2 + (t-4)y +5t^2/4 -t +13/4\) has a singular point in the positive orthant for \(t=1/2\).
\begin{figure}\label{exFNR}
	\centering
	\begin{tikzpicture}
		 \begin{scope}[thick,font=\scriptsize]
			\draw [->] (-4,0) -- (3,0);
			\draw [->] (0,-2) -- (0,4) ;
			\foreach \n in {-3,...,-1,1,2,...,2}{%
				\draw (\n,3pt) -- (\n,-3pt)   node [below] {$\n$};
			}
		\foreach \n in {-1,...,-1,1,2,...,3}{%
		\draw (-3pt,\n) -- (3pt,\n)   node [right] {$\n$};
		}
		\end{scope}
		\draw [color=red] (-0.5,2) circle (1);
		\draw [color=green] (-1.5,1.5) circle (1);
		\draw [color=blue,dashed] (-1,1.75) circle (1);
	\end{tikzpicture}
	\caption{The red circle defined by \(f_0=0\) intersects the positive orthant and not the green one defined by \(f_1=0\). The blue circle is the zero set of \(f_{1/2}\).}

\end{figure}
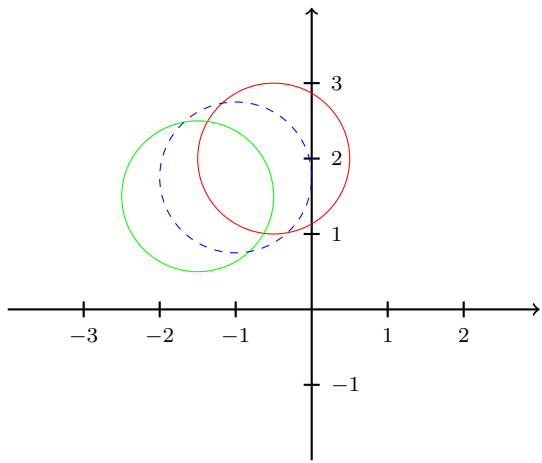

Let us go back for a moment to the codimension \(0\) case. Here, all intersections of $\A$ with faces of $Q$ have codimension $0$. In particular, the hypersurfaces defined by \(f_{t}\) and its truncations to faces of $Q$ do not have singular points in the corresponding positive orthants. Then the positive zero sets of polynomials $f_{t}$ are all homeomorphic. Consequently \(V_{>0}(f)\) is either empty or connected (and non-compact).

Let us consider now the codimension \(1\) case. Thus, the configuration \(\A\) admits a unique affine relation i.e.\ there exists a unique (up to multiplication by a scalar) vector \(b = (b_a) \in \R^{\A}\) such that \(\sum_{a \in \A} b_a =0\) and \(\sum_{a \in \A} b_a \cdot a =0\). Let \(\A^\prime\) be the support of \(b\) (defined by \(a \in \A^\prime\) if and only if \(b_a \neq 0\)). The configuration \(\A^\prime\) is known as a circuit. The convex hull $Q'$ of $\A'$ is a face of $Q$ and any triangulation of \(\A\) is induced by a triangulation of \(\A^\prime\). Furthermore, the triangulations of a circuit are also well known (see for example Proposition 1.2 in~\cite{GKZ}, in fact there are only two), and so, we know also the topology of \(V_{>0}(f_t)\), for \(t\) extremal, using again Viro's patchworking Theorem. More precisely, when \(t\) is extremal, if \(\A\) has an interior point, then \(V_{>0}(f_t)\) is either empty, or has one (compact or not) connected component (the choice only depends on the coefficients \(c_a\)). Otherwise, \(V_{>0}(f_t)\) contains \(0\), \(1\), or \(2\) non-compact connected component and no compact component. 

What happens now when we move \(t\)? Again, the topology can change only when a singularity appears. So let us consider all faces $F$ of $Q$ (including $Q$ itself) whose associated critical system might have a positive solution. It turns out that the only face $F$ of $Q$ such that $\A \cap F$ is not a pyramid is $Q ^\prime$, the convex hull of $\A'$. As noticed before, when $\A \cap F$ is a pyramid, the corresponding critical system has no solution.
So we just need to search for the positive solutions in \((x,t)\) of the system \(f_t^{Q^\prime} = x_1 \partial f_t^{Q^\prime}/ \partial x_1 = \cdots = x_n \partial f_t^{Q^\prime} / \partial x_n =0\). Let us emphasize here that we are reduced to studying the positive solutions of a system whose (exponents of) monomials form a set of codimension \(0\). However such a system has at most one positive solution \((t^\prime,x^\prime)\) (since up to monomial change of coordinates it is a linear system). Consequently there is no change in the topology in the interval \(0<t < t^\prime\),  neither in the interval \(t>t^\prime\). As \(V_{>0}(f)\) is smooth, it has to be homeomorphic to one of the two \(t\)-extremal cases. This finishes the proof of the proposition.

\subsubsection*{Outline of the paper}

We present basic tools used in this article in Section~\ref{Preliminaries}. This includes results about generalized discriminant varieties, Viro polynomials and critical systems. In Section~\ref{Critical systems}, we recall Gale duality for polynomial systems and
give bounds for the number of positive solutions of critical systems.
Section~\ref{S:Viro} is devoted to hypersurfaces constructed by the combinatorial patchworking construction. In Section~\ref{S:def} we present bounds for the number of non-defective faces of a polytope. In Section~\ref{Final bounds} we bring together all the previous results to obtain bounds for the number of connected components of an hypersurface in the positive orthant.
We conclude with an appendix containing basic results about monomial change of coordinates and technical proofs about generalized discriminant varieties.

\subsubsection*{Acknowledgements}
We thank Alicia Dickenstein, Elisenda Feliu, Maurice Rojas and Mate Telek for their interest in our work.
We want to thank Francisco Santos for pointing to us the example of Lawrence polytope as a polytope with many non-simplicial faces.

\section{Preliminaries}\label{Preliminaries}
	
\subsection{Generalized polynomials}
\begin{de} Let $\A$ be a non-empty finite subset of $\R^{n}$. We denote by \(\lvert \A\rvert\) its cardinal. The dimension $\dim(\A)$ of $\A$ is the dimension of the affine span of $\A$. The codimension of the set $\A$ is\footnote{Do not confuse with the codimension of the affine space generated by \(\A\). In fact, the terminology comes from the fact that it coincides with the codimension of the associated toric variety $X_{\A} \subset \CP^{|\A|-1}$.} the nonnegative integer $|\A|-\dim(\A)-1$.
\end{de}

We will usually denote by $d$ the dimension of \(\A\) and by $k$ its codimension.

\begin{ex}
The codimension of $\A$ is equal to $0$ if and only if $\A$ is the set of vertices of a simplex.
The elements of $\A$ are affinely dependent if and only if $\codim \A \geq 1$.
\end{ex}

For \(a \in \R^n\), we set \(x^a=x_1^{a_1}x_2^{a_2}\cdots x_n^{a_n}\).
We will call \emph{generalized polynomial} or simply \emph{polynomial} any finite linear combination of monomials $x^a$ for $a \in \R^n$.
Given a finite set $\A \subset \R^n$, the support of a polynomial $f=\sum_{a \in \A} c_a x^a$ is the subset of $\A$ of all $a$ such that $c_a \neq 0$.
Let us identify \(\R^{\A}\) with the space of real polynomials with supports contained in \(\A\), that is to say, we identify \(c=(c_a)\in \R^{\A}\) with the polynomial
\[
f(x) = \sum_{a\in \A}c_a x^{a}.
\]

Let $f \in  \R^{\A}$. We denote by \(V_{>0}(f)\) the zero set of $f$ in $(\R_{>0})^n$.
A numbering $\{a_{0},\ldots,a_{d+k}\}$ of $\A$ being given, the associated \emph{exponent matrix} is
\[
\mA = \begin{pmatrix}
	1 & 1 & \ldots & 1\\
	a_0 & a_1 & \ldots & a_{d+k}
\end{pmatrix} \in \R^{(n+1) \times (d+k+1)}.
\]
The convex hull $ \conv(\A)$ of $\A$ is a polytope that we will often denote by $Q$. The dimension of a polytope is the dimension of its affine span, so $\dim Q=\dim \A = \rk{\mA}-1$. Therefore $\dim \A=d$ if and only if the matrix \(\mA\) has rank \(d+1\). 
We will also use the matrix \(\hat{\mA} \in \R^{n \times (d+k+1)}\) which is the matrix \(\mA\) without its first row of one's.

The support of a polynomial system is the set of exponent vectors appearing with a non-zero coefficient in some equation, equivalently, this is the union of the supports of its polynomials. A real polynomial system with support in $\A=\{a_{0},\ldots,a_{d+k}\}$ can be written as
\[
\left\{
	\begin{array}{ccc}
		c_{1,0} x^{a_0} + c_{1,1} x^{a_1} + \cdots + c_{1,d+k} x^{a_{d+k}} & = &0 \\
		\vdots & \vdots  & \vdots \\
		c_{p,0} x^{a_0} + c_{p,1} x^{a_1} + \cdots + c_{p,d+k} x^{a_{d+k}} &=&0 \\
	\end{array}
\right.
\]
 where \(\mC = (c_{i,j}) \in \R^{p \times (d+k+1)}\).
 The matrix \(\mC\) is called the {\em coefficient matrix} of the system.
 
 \subsection{Gale duality and matroid theory}
 
Let \(\mU\) be a matrix in \(\R^{p \times q}\) of rank \(r\). A Gale dual of \(\mU\) is a matrix \(\mV \in \R^{q \times (q-r)}\) of full rank such that  \(\mU\cdot \mV\) equals \(0\) (the columns of $\mV$ form a basis of $\ker \mU$). We can notice that \(\mV\) is defined  up to multiplication on its right by a matrix in \(\text{GL}_{q-r}(\R)\). 

In the following, we will mainly use Gale duality for the exponent matrix \(\mA\) and for the coefficient matrix \(\mC\). These Gale dual matrices are usually denoted respectively \(\mB\) and \(\mD\).
This duality already appears in matroid theory.
For the reader unfamiliar with matroid theory, most of the paper can be read without knowledge on it. However, the viewpoint of the matroid theory gives a more enlightening vision of the considered objects. More information about matroid theory could be found in~\cite{Welsh,Oxley}\footnote{In fact, the matroids we consider in this paper are even oriented matroids. More information on them can be found in~\cite{BVSWZ} or in Section 6 of~\cite{Ziegler}. However, we let this specialization aside since we will not need it.}. We present here the links with this theory.

The affine dependences give to \(\A\) a structure of {\em matroid} (a subset \(\mathcal{I} \subset \A\) is independent in the matroid if and only if \(\mathcal{I}\) is an affinely independent subfamily of \(\A\)). This matroid will be denoted \(\MatrA\).

A \emph{circuit} is a subset of \(\A\) which is affinely dependent and minimal amongst such sets.
Equivalently, a circuit is a set which has codimension $1$ and such that all its proper subsets have codimension \(0\).

Moreover, if \(\mB\) is a matrix Gale dual to \(\mA\), then the {\em dual of the matroid \(\MatrA\)} (called \(\MatrAd\)) is exactly the rows matroid of \(\mB\) (that is to say, a subset \(\S\) of rows of \(\mB\) are independent if and only if they are linearly independent). 
 
 Let $\A$ be a finite set in $\R^{n}$ with convex hull $Q$. We will call {\em face} of $\A$ any subset $\AF=\A \cap F$ for some face $F$ of $Q$.
We can consider the restriction \(\MatrF\) of \(\MatrA\) to the subset \(\AF\). So, the codimension of \(\AF\) is just the rank of \((\MatrF)^{*}\).

A set $\A$ in $\R^{n}$ of dimension \(d\) is a {\em pyramid} if there exists $a \in \A$ such that $\A \setminus \{a\}$ is contained in some affine space of dimension \(d-1\). More generally, we will say that \(\A\) is a {\em pyramid over \(\S\) }(where \(\S \subsetneq \A\)) if \(\S\) belongs to an affine space of dimension \(d-\lvert \A\rvert +\lvert \S\rvert\). It means that any circuit of \(\A\) is in fact a circuit of \(\S\). By duality, this is equivalent to the fact that the rows \(\{b_\alpha \mid \alpha \in \A\setminus \S\}\) of \(\mB\) are \(0\). Consequently, from any set \(\A\) we can extract a unique subset \(\A' \subset \A\) such that \(\A\) is a pyramid over \(\A^\prime\) and \(\A'\) is not a pyramid (by duality, it just means we consider only the non-zero rows of \(\mB\)). The set \(\A'\) will be called {\em the basis} of \(\A\).

\begin{rem}\label{rem:npfaces}
	If \(F\) is a face of $Q$ and if \(\AF\) is non-pyramidal, then any element of \(\AF\) belongs to some circuit and so is in \(\A'\). Consequently, any non-pyramidal face \(\AF\) of $\A$ is in fact a face of $\A'$
(i.e.\ there exists a face $F'$ of the convex hull of $\A'$ such that  \(\AF=\A' \cap F'\)).
\end{rem}

The following lemma is elementary
\begin{lem}
	\label{codim-pyramid}
	Let $\A$ be a finite set in $\R^{n}$. For any proper subset $\S$ of $\A$, we have $\codim  \S \leq \codim \A$. Moreover, we have $\codim  \S= \codim \A$ if and only if $\A$ is a pyramid over $\S$.
\end{lem}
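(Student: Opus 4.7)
The proof is essentially a short exercise in unraveling the definitions, combined with one elementary observation about affine dimensions. Here is the plan.

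\textbf{Key inequality.} The main ingredient is the observation that for any two finite sets $\S \subsetneq \A$ in $\R^{n}$, we have
\[
\dim \A - \dim \S \;\leq\; |\A| - |\S|.
\]
I would establish this by adding the $|\A|-|\S|$ points of $\A \setminus \S$ to $\S$ one at a time and noting that each new point raises the dimension of the affine span by at most $1$.

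\textbf{First assertion.} Using only the definition $\codim \A = |\A| - \dim \A - 1$, a direct computation gives
\[
\codim \A - \codim \S \;=\; \bigl(|\A|-|\S|\bigr) - \bigl(\dim \A - \dim \S\bigr),
\]
and the right-hand side is nonnegative by the key inequality above. Hence $\codim \S \leq \codim \A$.

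\textbf{Equality case.} The same identity shows that $\codim \S = \codim \A$ if and only if $\dim \A - \dim \S = |\A|-|\S|$. I would then unravel the definition of pyramid: $\A$ is a pyramid over $\S$ precisely when $\S$ lies in an affine space of dimension $d - |\A| + |\S|$, where $d = \dim \A$, i.e.\ when $\dim \S \leq d - |\A| + |\S|$, which rearranges to $|\A|-|\S| \leq \dim \A - \dim \S$. Combined with the key inequality in the opposite direction, this is the desired equality, so both characterizations agree.

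\textbf{Main obstacle.} There is essentially no obstacle: the lemma is a tautology once the key inequality on affine dimensions is stated and the definition of a pyramid over $\S$ is read carefully. The only mild subtlety is checking that the definition of ``pyramid over $\S$'' given in the paper (affine span of $\S$ has dimension at most $d-|\A|+|\S|$) forces an actual equality, which is immediate from the key inequality.
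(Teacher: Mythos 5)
Your proof is correct and follows the straightforward approach the paper has in mind when it calls the lemma ``elementary'' and omits the proof: rewrite codimension as $\codim \A - \codim \S = (|\A|-|\S|) - (\dim\A - \dim\S)$, then invoke the fact that affine dimension grows by at most one per added point, and finally match the equality case against the paper's definition of ``pyramid over $\S$.'' Your remark on why the pyramid condition forces equality (rather than mere inequality) of dimensions is exactly the small subtlety worth making explicit.
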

In particular a set \(\A\) has same codimension as its basis.

\subsection{A-discriminant}
\label{S:Adiscr}
We first assume that $\A$ is a finite subset of $\Z^n$. A polynomial with support in $\A$ is a classical \emph{Laurent} polynomial.
The theory of the \(\A\)-discriminants has been studied in detail in the book of Gel'fand, Kapranov, and Zelevinsky~\cite{GKZ}. We now recall basic definitions and properties of this theory.

\begin{de}
Given any \(\A \subset \Z^n\),  the \emph{\(\A\)-discriminant variety}
	\(\nabla_{\A} \subset \C^{\A}\) is the Zariski closure of the set of all \(c \in (\C^{*})^{\A}\) such that the hypersurface \(\sum_{a \in \A} c_ax^{a}=0\) has a singular point in \((\C^{*})^n\). 
	\end{de}
When \(\nabla_{\A}\) has codimension \(1\), we define \(\Delta_{\A} \in \Z[c_a, a \in \A]\) -- the \emph{\(\A\)-discriminant} -- to be the unique (up to sign) irreducible defining polynomial of \(\nabla_{\A}\). Otherwise, when \(\nabla_{\A}\) has codimension at least \(2\) or is empty, we set \(\Delta_{\A}\) to the constant \(1\).  
	
	If \(F\) is a face of \(Q=\conv(\A)\), we define similarly \(\nabla_{\AF}\) and \(\Delta_{\AF}\). In the case where \(\Delta_{\AF} \equiv 1\), we say that the face \(F\), or the set $\A_{F}$, is \emph{defective}. 
	For every \(f \in \R^{\A}\) and every face \(F\) of \(Q\), we denote by \(f^F\) the polynomial {\em truncated} to the face \(F\):
	\[
		f^F = \sum_{a \in \AF} c_a x^a.
	\]
	Its coefficients are $(c_{a})_{a \in \A_{F}}$.
We set \(\tilde{\nabla}_{\AF} = \{ c \in \RA \mid (c_{a})_{a \in \A_{F}} \in \nabla_{\AF}\}\).
Finally, we consider
	\[
		\nabla = \bigcup_{F\text{ face of }Q} \tilde{\nabla}_{\AF}
	\]
	and we denote by \(\R_{\text{gen}}^{\A}\) the set \(\RA \setminus \nabla\).

We will need more notations and results from the book \cite{GKZ}.
The toric variety $X_{\A}$ associated with $\A=\{a_{0},\ldots,a_s\} \subset \Z^{n}$ is the Zarisky closure in $\CP^{s}$ of the image of the map $\varphi_{\A}: (\C^{*})^{n} \rightarrow \CP^{s}$ defined by $\varphi_{\A}(x)=(x^{a_{0}}:\cdots:x^{a_{s}})$.
Define $X_{\A}^{\circ}(\R_{>0}) \subset X_{\A}$ as the image of the positive orthant $(\R_{>0})^{n}$ under the map $\varphi_{\A}$ and let $X_{\A}(\R_{>0})$ denote the closure of $X_{\A}^{\circ}(\R_{>0})$  in $X_{\A}$
(see \cite{GKZ}, Chapter 11). 
Consider the stratification of $X_{\A}(\R_{>0})$ given by its intersections with the subtoric varieties $X_{F}$ given by the faces of the convex hull $Q$ of $\A$.
The moment map associated with $\A$ is the map $\mu: X_{\A} \rightarrow Q$ which extends the map $\varphi_{\A}((\mathbb{C}^*)^n) \to Q, z \mapsto \frac{\sum_{a \in \A} |z_a| \cdot a}{\sum_{a \in \A} |z_a|}$.
The restriction of $\mu$ to 
$X_{\A}^{\circ}(\R_{>0})$ induces a diffeomorphism onto the relative interior of $Q$ (see, for instance, \cite[Chapter 6]{GKZ} or
\cite{Ris92}).
\begin{prop}[\cite{GKZ}, Theorem 5.3, page 383]
\label{moment map}
The moment map $\mu$ induces a stratified map $X_{\A}(\R_{>0}) \rightarrow Q$
sending $X_{\A_{F}}(\R_{>0})$ to $F$ for each face $F$ (including $Q$). Moreover the restriction of $\mu$ to $X_{\A_{F}}^{\circ}(\R_{>0})$ is a diffeomorphism onto the relative interior $\mbox{relint}(F)$ of $F$ for each face $F$ of $Q$ (including $Q$).
\end{prop}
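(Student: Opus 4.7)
The plan is to reduce the proposition to a single claim --- that the restriction of $\mu$ to the top open stratum $X_{\A}^{\circ}(\R_{>0})$ is a diffeomorphism onto $\mathrm{relint}(Q)$ --- and then apply this claim to each face $F$ separately with $\A_F$ in place of $\A$. This reduction is sound because the Zariski closure $X_{\A_F}$ sits inside $X_{\A}$ as a subtoric variety and $X_{\A_F}(\R_{>0})$ coincides with the closure of $X_{\A_F}^{\circ}(\R_{>0})$; moreover, on the stratum $X_{\A_F}^\circ(\R_{>0})$ the formula $\mu(z) = \sum_a |z_a| a / \sum_a |z_a|$ collapses to the moment map attached to $\A_F$ because the coordinates $z_a$ with $a \notin \A_F$ vanish there. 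So I would first prove the interior diffeomorphism, then check that gluing these diffeomorphisms across faces gives a continuous stratified map on the whole compactification.

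For the interior diffeomorphism, I would pass to logarithmic coordinates $y \in \R^n$ via $x_i = e^{y_i}$. Letting $L \subset \R^n$ denote the linear span of $\{a - a_0 : a \in \A\}$, the image $\varphi_{\A}(x)$ only depends on $y$ modulo $L^{\perp}$, so $X_{\A}^{\circ}(\R_{>0})$ is canonically diffeomorphic to $\R^n / L^{\perp} \cong \R^d$. A direct computation identifies $\mu$ in these coordinates with the gradient of the log-sum-exp function
\begin{equation*}
\phi(y) \;=\; \log \sum_{a \in \A} e^{\langle a,\, y\rangle},
\end{equation*}
whose Hessian is the weighted covariance of the vectors $a$ for the Gibbs weights $p_a(y) = e^{\langle a, y\rangle}/\sum_{a'} e^{\langle a', y\rangle}$. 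This Hessian is positive definite modulo $L^{\perp}$ because the elements of $\A$ affinely span a $d$-dimensional space, so $\phi$ is smooth and strictly convex on the quotient. Standard convex analysis then shows that $\nabla \phi$ is a diffeomorphism onto its image: injectivity follows from strict convexity; the image lies in $\mathrm{relint}(Q)$ since $\nabla \phi(y)$ is a strictly positive convex combination of the $a$'s; and surjectivity onto $\mathrm{relint}(Q)$ comes from the fact that for each $q \in \mathrm{relint}(Q)$ the function $y \mapsto \phi(y) - \langle q, y\rangle$ is strictly convex and coercive modulo $L^{\perp}$ --- coercivity because the linear growth rate of $\phi$ along any direction $v \in L$ equals the support function $\max_a \langle a, v\rangle$, which strictly exceeds $\langle q, v\rangle$ when $q \in \mathrm{relint}(Q)$.

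The remaining task is continuity and stratum-compatibility at the boundary of $X_{\A}(\R_{>0})$. I would use the standard atlas on $X_{\A}$ indexed by the vertices of $Q$ (or by the maximal cones of a refinement of the normal fan of $Q$); in each such chart the homogeneous coordinates $z_a$ for $a \notin \A_F$ tend to zero as one approaches the stratum $X_{\A_F}^{\circ}(\R_{>0})$, and the barycentric formula for $\mu$ extends continuously, its limit on that stratum being the moment map attached to $\A_F$ by construction. Combined with the interior diffeomorphism applied to each $\A_F$, this delivers the stratified statement. The main obstacle I anticipate is handling the compactification cleanly when $Q$ has non-simple faces, because no single affine chart covers a whole neighbourhood of such a boundary stratum; the standard way around this, which I would follow, is to realise $X_{\A}(\R_{>0})$ via the normal fan of $Q$ (as in Chapter 11 of \cite{GKZ}), where the boundary strata come with explicit local coordinates in which the continuity and smoothness of $\mu$ on each stratum are manifest.
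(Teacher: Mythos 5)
The paper offers no proof of this proposition: it is a verbatim citation of Theorem 5.3, p.~383 of \cite{GKZ}, so there is nothing on the paper's side to compare line by line. Your sketch reconstructs the standard argument correctly, and it is in substance the argument that GKZ give (in the language of the Legendre transform / log-sum-exp potential).

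The reduction to the top stratum is sound: on $X_{\A_F}^{\circ}(\R_{>0})$ the homogeneous coordinates $z_a$ with $a\notin\A_F$ vanish, so the barycentric formula for $\mu$ restricts to the moment map attached to $\A_F$, and the assertion for the face $F$ becomes the top-cell assertion applied to $\A_F$. Your top-cell argument is also correct: in log coordinates the positive torus orbit is $\R^n/L^\perp\cong\R^d$, the moment map is $\nabla\phi$ for $\phi(y)=\log\sum_{a\in\A}e^{\langle a,y\rangle}$, and the Hessian of $\phi$ is the Gibbs covariance of the exponent vectors, whose kernel is exactly $L^\perp$ since $\A$ affinely spans a $d$-dimensional space; injectivity follows from strict convexity on the quotient, the image lies in $\mbox{relint}(Q)$ because $\nabla\phi(y)$ is a strictly positive convex combination of the $a$'s, and surjectivity comes from coercivity of $y\mapsto\phi(y)-\langle q,y\rangle$ on $L$, which uses precisely that the support function $v\mapsto\max_a\langle a,v\rangle$ strictly dominates $\langle q,\cdot\rangle$ on $L\setminus\{0\}$ when $q\in\mbox{relint}(Q)$. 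All of that checks out.

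The one place you leave genuine work undone is the continuity and stratum-compatibility of $\mu$ across the whole compactification $X_{\A}(\R_{>0})$, especially near non-simple faces of $Q$ where no single affine chart covers a neighbourhood. You correctly flag this and point to the normal-fan atlas of Chapter~11 of \cite{GKZ}, but your outline does not actually carry out the chart-by-chart verification, so at that step your argument is a pointer to the reference rather than a self-contained proof. Since the paper itself treats the whole statement as a citation, this is not a defect relative to the paper, but it is the part of your plan that would need to be written out to make the proof complete.
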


Let $f\in \R^{\A}$ be a polynomial with support $\A$. For any face $F$ of $Q$, define the {\it chart}
$C_{F}^{\circ}(f^{F}) \subset \mbox{relint}(F)$ as the image of the hypersurface defined by $f^{F}$ in  $X_{\A_{F}}^{\circ}(\R_{>0})$ by the moment map associated with $\A_{F}$. By Proposition \ref{moment map}, this hypersurface and $C_{F}^{\circ}(f^{F})$ are diffeomorphic.
Furthermore, let $C_{F}(f^{F})$ be the closure of $C_{F}^{\circ}(f^{F})$ in $F$.

\begin{cor}
\label{strata for Z}
The space $C_{Q}(f)$ is a stratified space, the strata are given by the intersections with the faces of $Q$. We have $C_{Q}(f) \cap \mbox{relint}(F)=C_{F}^{\circ}(f^{F})$ and $C_{Q}(f) \cap F=C_{F}(f^{F})$ for any face $F$ of $Q$.
\end{cor}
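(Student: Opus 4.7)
The plan is to realize $C_{Q}(f)$ as the moment-map image of a compact subset of $X_{\A}(\R_{>0})$ and then to transfer the face stratification of $X_{\A}(\R_{>0})$ via Proposition~\ref{moment map}. Concretely, let $Z^{\circ} \subset X_{\A}^{\circ}(\R_{>0})$ denote the image under $\varphi_{\A}$ of the positive zero set of $f$, so that $C_{Q}^{\circ}(f) = \mu(Z^{\circ})$ by definition, and let $\bar Z$ be the closure of $Z^{\circ}$ in the compact real toric variety $X_{\A}(\R_{>0})$. Since $\mu$ is continuous and $X_{\A}(\R_{>0})$ is compact, $\mu(\bar Z)$ is closed in $Q$ and contains $C_{Q}^{\circ}(f)$; a short argument using that every limit point of $C_{Q}^{\circ}(f)$ lifts along a convergent subsequence in $\bar Z$ shows that $\mu(\bar Z)$ is exactly the closure $C_{Q}(f)$.

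The heart of the proof is to identify $\bar Z \cap X_{\A_{F}}^{\circ}(\R_{>0})$ for each face $F$ of $Q$. I claim this intersection equals the image of $V_{>0}(f^{F})$ inside $X_{\A_{F}}^{\circ}(\R_{>0})$ under $\varphi_{\A_{F}}$. One inclusion is direct: on the torus orbit $X_{F}^{\circ}$ of $X_{\A}$, the homogeneous coordinates $z_{a}$ with $a \notin \A_{F}$ vanish, so the homogenized equation $\sum_{a \in \A} c_{a} z_{a} = 0$ restricts precisely to $f^{F} = 0$. For the reverse inclusion, starting from $y$ on the boundary stratum with $f^{F}(y) = 0$, I would move $y$ back into $X_{\A}^{\circ}(\R_{>0})$ along a one-parameter subgroup whose direction is an inward normal to $F$ in $Q$; after the appropriate rescaling, the equation $f = 0$ degenerates to $f^{F} = 0$ in the limit, exhibiting $y$ as a limit of points of $Z^{\circ}$.

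Combining these two steps with Proposition~\ref{moment map}, which asserts that $\mu$ maps $X_{\A_{F}}^{\circ}(\R_{>0})$ diffeomorphically onto $\mbox{relint}(F)$, gives
\[
C_{Q}(f) \cap \mbox{relint}(F) \;=\; \mu\bigl(\bar Z \cap X_{\A_{F}}^{\circ}(\R_{>0})\bigr) \;=\; C_{F}^{\circ}(f^{F}).
\]
Taking the closure of both sides inside $F$ yields $C_{Q}(f) \cap F = C_{F}(f^{F})$. The stratification of $C_{Q}(f)$ by the faces of $Q$ then follows from the disjoint decomposition $X_{\A}(\R_{>0}) = \bigsqcup_{F} X_{\A_{F}}^{\circ}(\R_{>0})$ together with the fact that $\mu$ respects it.

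The main obstacle is the reverse inclusion in the second paragraph: showing that every positive zero of the truncation $f^{F}$ is obtained as a limit of positive zeros of $f$ in $X_{\A}^{\circ}(\R_{>0})$. This is essentially a local Viro-patchworking computation, and care is needed because $\A$ may consist of real, not integer, exponent vectors; the requisite one-parameter deformation must be set up in the generalized framework described in Section~\ref{S:Viro}, where the real toric variety $X_{\A}(\R_{>0})$ and its orbit decomposition are defined for real $\A$.
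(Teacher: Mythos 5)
The paper gives no proof of this corollary; it is treated as a direct consequence of Proposition~\ref{moment map}, imported from~\cite{GKZ}, so there is nothing in the text to compare your argument against. Your overall plan — realize $C_Q(f)$ as the moment-map image of the closure $\bar Z$ of the lifted hypersurface inside the compact space $X_\A(\R_{>0})$, and transfer the face stratification via Proposition~\ref{moment map} — is indeed the natural framework.

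The reverse inclusion that you single out as the main obstacle is, however, a genuine gap, and it is not merely a technical Viro-patchworking annoyance that a careful one-parameter argument would resolve: it can actually fail. Take $\A=\{(0,0),(1,0),(2,0),(0,1)\}$ and $f(x,y)=1-2x+x^2+y=(x-1)^2+y$. Then $V_{>0}(f)=\emptyset$, so $C_Q(f)=\emptyset$; yet for the bottom edge $F$ (with $\A_F=\{(0,0),(1,0),(2,0)\}$) one has $f^F(x)=(x-1)^2$, which has the positive zero $x=1$, hence $C_F^\circ(f^F)\neq\emptyset$. Your proposed deformation along the inward normal is exactly what breaks here: for small $y>0$ the equation $(x-1)^2+y=0$ has no real solution near $x=1$, because the zero of $f^F$ has even local order and $f^F$ does not change sign across it. So the claimed equality $C_Q(f)\cap\mbox{relint}(F)=C_F^\circ(f^F)$ needs a transversality/genericity hypothesis ensuring that $f^F$ changes sign along its positive zero locus (as happens for $f\in\R_{\text{gen}}^{\A}$, the setting in which the corollary is actually invoked in Propositions~\ref{isotopic} and~\ref{Jens}). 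Without such a hypothesis, only the forward inclusion $C_Q(f)\cap F\subseteq C_F(f^F)$ — the continuity direction you handle correctly — survives. Your proof should either add that hypothesis explicitly and verify that the one-parameter deformation produces a genuine sign change, or scale the conclusion back to the one inclusion.
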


We have assumed at the beginning of this subsection that $\A \subset \Z^{n}$. Since we are only interested by zero sets of \(\A\)-polynomials  in the positive orthant, we might relax this assumption and consider finite sets $\A \subset \R^{n}$. Then, \(\A\)-polynomials are in general not classical Laurent polynomials, they are sometimes called {\em generalized polynomials} in the literature.
This leads to the theory of irrational toric varieties which have been studied in several papers~\cite{PES},~\cite{CGDS},~\cite{PirSot}.
Allowing $\A$ to be a subset of $\R^{n}$ produces more general results, but it also requires a slight modification of the definitions above. Assume now that $\A$ is a finite set in $\R^n$ which is not contained in $\Z^{n}$. Though the above map $\varphi_{\A}$ is no longer well-defined for real exponent vectors, its restriction to the positive $(\R_{>0})^{n}$ is well-defined and it makes sense to set again $X_{\A}^{\circ}(\R_{>0})=\varphi_{\A}((\R_{>0})^{n})$.
Notice that if \(\A\) has dimension \(n\) then \(\varphi_{\A}\) induces a diffeomorphism between \((\R_{>0})^{n}\) and $X_{\A}^{\circ}(\R_{>0})$.
The space $X_{\A}(\R_{>0})$ is then the image in $\RP^{s}$ of the closure of $X_{\A}^{\circ}(\R_{>0})$ in the usual topology on $\R_{\geq 0}^{\A}=\R_{\geq 0}^{s+1}$. Denote by \(\mu\) the map $X_{\A}(\R_{>0}) \rightarrow Q, z \mapsto \frac{\sum_{a \in \A} |z_a| \cdot a}{\sum_{a \in \A} |z_a|}$, which will be called generalized moment map. This map is a homeomorphism \cite[Theorem 2.2]{PES} (see also \cite{PirSot}, Section 4.4) which restricts to a diffeomorphism between $X_{\A}^{\circ}(\R_{>0})$ and the relative interior of $Q$.
We define charts of generalized polynomials as in the classical case: for any $\A$-polynomial and any face $F$ of $Q$, the chart $C_{F}^{\circ}(f^{F}) \subset \mbox{relint}(F)$ is the image of the hypersurface defined by $f^{F}$ in  $X_{\A_{F}}^{\circ}(\R_{>0})$ by the map \(\mu\).
It follows that Proposition~\ref{moment map} and Corollary~\ref{strata for Z} still hold true when $\A \subset \R^{n}$.

We also need to modify the definition of discriminant varieties. Clearly, the sets will not be algebraic varieties anymore. However, they will still have a ``tame topology'' since they will be definable in a o-minimal expansion of \(\R\) (see~\cite{vD98,Cos00} for some nice expositions on the o-minimal theory). Let \(S \subseteq \R\).
By adding to the ordered field of real numbers \(\overline{\R} = \langle\R, +, \times, >,0,1\rangle\) the power functions \(x^r\) (with \(r \in S\)) defined by
\begin{align*}
	x & \mapsto \begin{cases} x^r & \text{if }x>0 \\ 0 & \text{otherwise,} \end{cases}
\end{align*}
we obtain the o-minimal expansion \(\overline{\R}^S\)~\cite{Mil94}.
We choose not to directly follow the approach with {\em exponential sums} as in~\cite{For19}, see also~\cite{FNR17}, since, although \(\R_{\text{exp}}\) is also o-minimal~\cite{Wil96}, \(\overline{\R}^S\) is a reduct of \(\R_{\text{exp}}\) and so has nicer properties  (for example \(\overline{\R}^S\) is polynomially bounded~\cite{Mil94}).

In the following, we will take as for \(S\) the finite set of coordinates of the elements of \(\A\). 
Hence, the \(\A\)-polynomials are definable functions in \(\overline{\R}^S\).

\begin{de}
	Given any \(\A \subset \R^n\),  let \(\accentset{\circ}{\nabla}_{\A}\) be the set of all \(c \in {(\R^{*})}^{\A}\) such that the hypersurface
	$\sum_{a \in \A}c_{a} x^a=0$ has a singular point in \((\R_{>0})^n\). The (generalized)  \emph{positive \(\A\)-discriminant variety}
	\(\nabla_{\A} \subset \R^{\A}\) is the Euclidean closure of \(\accentset{\circ}{\nabla}_{\A}\).
\end{de}
For \(c \in \RA\) denote by \(f_c\) the function defined by \(\sum_{a\in \A} c_a x^a\). Notice that \(\nabla_{\A}\) is the closure of the projection on the first \(\lvert \A\rvert\) coordinates of the definable set
\begin{equation*}
	\mathcal{D}_{\A} = \left\{(c,x)\in (\R^*)^{\A}\times(\R_{>0})^n \Bigm\vert 
	f_c(x)=x_1\frac{\partial f_c}{\partial x_1}(x) = \cdots = x_n\frac{\partial f_c}{\partial x_n}(x) = 0\right\}.
\end{equation*} 
Hence, \(\nabla_{\A}\) is a definable set. 

We consider the Horn-Kapranov uniformization~\cite{Kap91}. This parametrization still holds for exponential sums as it was shown in~\cite[Thm 1.7]{RR}. We follow, in the real case, the standard deduction of the Horn-Kapranov uniformization. Assume that \(\A\) is not pyramidal since otherwise \(\nabla_{\A}\) is empty. Let \(\mB\) be a Gale dual of the exponent matrix \(\mA\). Any row \(\mB_a\) of \(\mB\) gives rise to a non-zero linear form $y \mapsto \langle B_a,y \rangle$ of \(\R^k\). Let \(\mathcal{H}_{\mB} \subset \R^k\) be the hyperplane arrangement given by the union of the kernels of these linear forms. We have
\begin{align*}
c \in \accentset{\circ}{\nabla}_{\A} 
& \iff \exists x \in (\R_{>0})^n, \ A \cdot \left(c_a x^a\right)_{a \in \A} = 0 \\
&\iff \exists x \in (\R_{>0})^n \exists y \in (\R^k \setminus \mathcal{H}_{\mB}) \forall a \in \A,
\ c_a = x^{-a} \langle B_a,y \rangle
\end{align*}
where \(\left(c_a x^a\right)_{a \in \A}\) is given as a column vector.
So, \(\accentset{\circ}{\nabla}_{\A}\) is the image of \((\R_{>0})^n \times (\R^k \setminus \mathcal{H}_{\mB})\) by the previous definable function.

\begin{theo}[Kapranov, Rojas-Rusek]
\label{T:Horn}
    The set \(\accentset{\circ}{\nabla}_{\A}\) is parametrized by the map
\( \Phi_{\A} : (\R_{>0})^n \times (\R^k \setminus \mathcal{H}_{\mB}) \to (\R^*)^{\A}, 
\quad \Phi_{\A}(x,y) = \left( x^{-a} \langle B_a,y\rangle \right)_{a\in \A}\).
\end{theo}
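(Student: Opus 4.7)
The plan is to unwind the critical system defining $\accentset{\circ}{\nabla}_{\A}$ into a linear statement read through Gale duality. First I would rewrite the critical system in matrix form. Since $x_i \frac{\partial f_c}{\partial x_i} = \sum_{a \in \A} a_i\, c_a x^a$ and $f_c = \sum_{a \in \A} c_a x^a$, the system
\[
f_c(x) = x_1 \tfrac{\partial f_c}{\partial x_1}(x) = \cdots = x_n \tfrac{\partial f_c}{\partial x_n}(x) = 0
\]
is exactly $\mA \cdot (c_a x^a)_{a \in \A} = 0$, reading the monomials as a column vector. Hence for $x \in (\R_{>0})^n$ the existence of a solution to the critical system is equivalent to $(c_a x^a)_{a\in\A} \in \ker \mA$.

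Next I would invoke the defining property of a Gale dual: the columns of $\mB$ form a basis of $\ker \mA$. Therefore $(c_a x^a)_{a\in\A} \in \ker \mA$ if and only if there exists $y \in \R^k$ such that $(c_a x^a)_{a\in\A} = \mB\, y$, which coordinatewise reads $c_a x^a = \langle B_a, y\rangle$ for every $a \in \A$. Solving for $c_a$ in the positive orthant (where $x^a > 0$) yields
\[
c_a = x^{-a} \langle B_a, y\rangle = \Phi_{\A}(x,y)_a.
\]

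Finally I would account for the nonvanishing constraint. By definition $\accentset{\circ}{\nabla}_{\A} \subset (\R^*)^{\A}$, so we must have $c_a \neq 0$ for every $a$, which, since $x^{-a} \neq 0$, is equivalent to $\langle B_a, y\rangle \neq 0$ for every $a \in \A$, i.e.\ to $y \notin \mathcal{H}_{\mB}$. Combining the two equivalences gives $\accentset{\circ}{\nabla}_{\A} = \Phi_{\A}\bigl((\R_{>0})^n \times (\R^k \setminus \mathcal{H}_{\mB})\bigr)$, which is precisely the claimed parametrization.

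The only point that requires a small caveat is the pyramidal case: if $\A$ is a pyramid then some row $B_a$ of $\mB$ is zero, forcing $c_a = 0$ in any would-be image of $\Phi_{\A}$, consistent with the fact (noted just before the statement) that $\nabla_{\A}$ is empty in that situation. There is no real obstacle in this proof — the bulk of the work is already encoded in the definitions of the critical system and of Gale duality — but the step that deserves care is the matrix rewriting $\mA \cdot (c_a x^a) = 0$, since the first row of $\mA$ (the row of ones) is what produces the equation $f_c = 0$ alongside the logarithmic derivatives; getting this indexing right is the one place where a slip could occur.
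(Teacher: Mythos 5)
Your proof is correct and takes essentially the same route as the paper: rewrite the critical system as $\mA \cdot (c_a x^a)_{a\in\A}=0$, apply the Gale-duality identification $\ker \mA = \mathrm{col}(\mB)$ to produce $y$, solve for $c_a$ in the positive orthant, and translate the nonvanishing condition $c_a\neq 0$ into $y\notin\mathcal{H}_{\mB}$. Your closing caveat about the pyramidal case matches the paper's standing assumption (made just before the theorem) that $\A$ is not a pyramid, so the rows $B_a$ are nonzero.
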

Note that \(\accentset{\circ}{\nabla}_{\A}\) does not change if we apply an affine transformation of $\R^n$ to $\A$. Indeed, this amounts to multiplying a $\A$-polynomial $f$ by a monomial and doing a monomial change of coordinates. These operations preserve the fact that \(V_{>0}(f)\) is singular (see Section~\ref{monomial}). Thus if $d <n$ we might send $\A$ to a $d$-coordinate subspace by an affine transformation so that the parametrization in Theorem \ref{T:Horn} becomes a parametrization
\begin{equation}
\label{E:para}
\Phi_{\A} : (\R_{>0})^d \times (\R^k \setminus \mathcal{H}_{\mB}) \to (\R^*)^{\A}  
\end{equation}
(the ambient dimension $n$ is replaced by the dimension $d$ of $\A$).
In particular, this shows that the codimension of \(\nabla_{\A}\) is at least \(1\).
We will mostly use a parametrization like in \eqref{E:para} later on.
We say that a face \(F\), or the set $\A_{F}$, is \emph{defective} if \(\nabla_{\AF}\) has codimension at least 2 (see \cite{For19}, Section 6). Then we keep the other definitions  given above for the case $\A\subset\Z^{n}$. Namely, for any face $F$, define \(\tilde{\nabla}_{\A_F} = \{ c \in \RA \mid (c_{a})_{a \in \A_{F}} \in \nabla_{\A_F}\}\). Set
	\[
		\nabla = \bigcup_{F\text{ face of }\A} \tilde{\nabla}_{\AF}
	\]
	and denote by \(\R_{\text{gen}}^{\A}\) the definable set \(\RA \setminus \nabla\).
By~\cite[Thm. 3.9]{Cos00}, \(\R_{\text{gen}}^{\A}\) is partitioned into a finite number of definable open connected components which are arcwise connected.

\begin{prop}
\label{isotopic}
If $f_{0}$ and $f_{1}$ are $\A$-polynomials in the same connected component of $\R_{\mathrm{gen}}^{\A}$, then $V_{>0}(f_{0})$ and $V_{>0}(f_{1})$ are homeomorphic.
\end{prop}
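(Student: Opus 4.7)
The plan is to realize the desired homeomorphism by a stratified trivialization along a path from $f_0$ to $f_1$. Since the excerpt already notes (citing \cite[Thm.~3.9]{Cos00}) that every connected component of $\R_{\text{gen}}^{\A}$ is arcwise connected, one can choose a continuous path $t \mapsto f_t$ in $\R_{\text{gen}}^{\A}$ joining $f_0$ to $f_1$, and indeed one can arrange it to be definable in the o-minimal structure $\overline{\R}^{S}$. The goal is then to show that the family $t \mapsto V_{>0}(f_t)$ is a locally trivial fibration over $[0,1]$.

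To control behavior ``at infinity'' in the positive orthant, I would work inside the compactification provided by the generalized moment map. After reducing to $d=n$ by an affine transformation (cf.\ the discussion around \eqref{E:para}), the positive orthant $(\R_{>0})^{n}$ is diffeomorphic to $X_{\A}^{\circ}(\R_{>0})$, which sits inside the compact space $X_{\A}(\R_{>0}) \cong Q$. For each $t$ consider the compactified trace $C_Q(f_t) \subset Q$ with its canonical stratification by charts $C_F^{\circ}(f_t^F) \subset \mathrm{relint}(F)$ (Corollary~\ref{strata for Z}). The key geometric input is that $f_t \notin \widetilde{\nabla}_{\A_F}$ for every face $F$ of $Q$: by Proposition~\ref{moment map} the generalized moment map restricts to a diffeomorphism on each $X_{\A_F}^{\circ}(\R_{>0})$, so each stratum $C_F^{\circ}(f_t^F)$ is a smooth, nonsingular submanifold of $\mathrm{relint}(F)$ throughout the path. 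Thus $(t,C_Q(f_t))$ is a definable family of closed stratified subsets of the fixed compact polytope $Q$, with a fixed poset of strata and no stratum becoming singular.

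I would then invoke the o-minimal Hardt trivialization theorem (see \cite{Cos00}), or equivalently a stratified Thom–Mather/Ehresmann isotopy lemma, to obtain a definable homeomorphism $H:[0,1]\times Q \to [0,1]\times Q$ over $[0,1]$ sending $\{0\}\times C_Q(f_0)$ to $\{t\}\times C_Q(f_t)$ and preserving the face-stratification of $Q$. Restricting $H(1,\cdot)$ to the open stratum $\mathrm{relint}(Q)$ and pulling back through $\mu^{-1}$ yields the desired homeomorphism $V_{>0}(f_0)\cong V_{>0}(f_1)$.

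The main obstacle is precisely justifying the stratified trivialization step: one has to check that avoiding every face-discriminant $\widetilde{\nabla}_{\A_F}$ is enough to guarantee the regularity conditions (Whitney, or simply the equisingularity required by Hardt's theorem in the o-minimal setting) across the incidence of a face $F$ into a larger face $F'$. Morally this is exactly what the definition of $\nabla$ is set up to ensure — a $C_{F}^{\circ}(f_{t}^{F})$ can only collide with a neighboring stratum when a discriminant is crossed — but a careful verification, using the Horn–Kapranov parametrization of Theorem~\ref{T:Horn} to describe neighborhoods of $X_{\A_F}(\R_{>0})$ in $X_{\A}(\R_{>0})$, is what the rigorous proof really rests on, and is the part I would expect to be technical and perhaps deferred to an appendix.
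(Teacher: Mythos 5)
Your overall strategy is the same as the paper's: trivialize the family of compactified traces $C_Q(f_t) \subset Q$ over a path in $\R_{\mathrm{gen}}^{\A}$, and then pull back through the moment map. But you have (and honestly flag) a gap at exactly the load-bearing step, and the way the paper closes it is a concrete idea you are missing rather than just a technical verification to ``defer.''

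Two issues with your trivialization step. First, Hardt's theorem is not quite the right tool as stated: it guarantees a finite definable partition $[0,1]=B_1\cup\cdots\cup B_r$ over which the map is definably trivial, but it does not by itself say the partition is the trivial one, so you cannot conclude $V_{>0}(f_0)\cong V_{>0}(f_1)$ without additional input ruling out transition points. Second, if you instead invoke a Thom--Mather isotopy lemma, you need the stratification by faces to satisfy Whitney (or equivalent frontier/regularity) conditions, and you correctly note you have not checked this. The paper's route sidesteps both problems: it forms the total space
\[
\hat{C}=\{(x,t)\in Q\times[0,1] \mid x\in C_Q(f_t)\},
\]
observes that $\hat{C}$ is not merely a stratified set but a \emph{manifold with corners} (with strata cut out by the faces of $Q\times[0,1]$), and then applies the first Morse Lemma for manifolds with corners from Goresky--MacPherson. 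For a manifold with corners one does not need to verify Whitney conditions separately; the verification reduces to exactly the two things you already observe, namely that each stratum of $\hat C$ is a smooth manifold (because $f_t^F\notin\nabla_{\A_F}$ for every face $F$ and every $t$) and that the projection $\pi:\hat C\to[0,1]$ restricted to each stratum is a submersion. The smoothness of the ambient stratum is established via $\varphi_{\A_F}$ and the generalized moment map, and the non-criticality of $\pi$ on each stratum follows because the gradient of $(x,t)\mapsto f_t^F(x)$ in the $x$-directions is nonzero along $\{f_t^F=0\}$. So the missing ingredient in your argument is precisely the manifold-with-corners structure of $\hat C$, which lets you bypass the regularity conditions you were worried about.
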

\begin{proof}
Let us denote by \(c_0\) and \(c_1\) the coefficients of \(f_0\) and \(f_1\).
Consider a smooth path $(c_{t})_{t \in [0,1]}$ from $c_{0}$ to $c_{1}$, where the points $c_{t}=(c_{t,a})_{a \in \A}$ for $t \in [0,1]$ are  contained in the same connected component of $\R_{\mathrm{gen}}^{\A}$.
Define \(\hat{C} = \{ (x, t) \in Q\times [0,1] \mid x \in C_Q(f_t)\}\) where $f_{t}(x)=\sum_{a \in \A} c_{t,a}x^{a}$. Let $\pi: \hat{C} \rightarrow [0,1]$ denote the projection onto the $t$-coordinate. Then for each $t \in [0,1]$ we have $\pi^{-1}(t)=C_{Q}(f_{t}) \times \{t\}$, and furthermore $\pi^{-1}(t) \cap (F \times \{t\})=C_{F}(f_{t}^{F})\times \{t\}$ for any face $F$ of $Q$. We prove that $\hat{C}$ is a compact stratified space, in fact a manifold with corners, the strata being given by the intersections with the relative interiors of faces of $Q \times [0,1]$ and that $\pi$ is a stratified function with no critical point. We are reduced to prove that each stratum is a smooth manifold and that the restriction of $\pi$ to the stratum is a smooth function without critical point (our reference for stratified manifolds and for manifolds with corners is~\cite{Goresky}).
Consider a face of the form $F \times [0,1]$, where $F$ is a face of $Q$. 
Replacing $f$ by $f^F$ we can assume without loss of generality that $F=Q$. Consider the stratum $Z$ given
by the intersection of $\hat{C}$ with $\mbox{relint}(Q) \times ]0,1[$. Using the map $\varphi_{\A}$ and the corresponding (generalized) moment map as above,
we get that $Z$ is diffeomorphic to $\{(x,t) \in \R_{>0}^d \times ]0,1[, f_t(x)=0\}$. Since $t \mapsto (c_{a,t})_a$ and $x \mapsto f_t(x)$ are smooth functions, the map  $F:\R_{>0}^d \times ]0,1[ \rightarrow \R, (x,t) \mapsto f_t(x)$ is smooth. Moreover, the gradient of $F$ at a point $(x,t) \in \R_{>0}^d \times ]0,1[$ such that $f_t(x)=0$ is non-zero since the gradient with respect to the variable $x$ is non-zero due to the fact that the path $(c_{t})_{t \in [0,1]}$ is contained in one connected component of $\R_{\mathrm{gen}}^{\A}$ (it does not intersect the $\A$-discriminant variety). Thus $Z$ is a smooth manifold. Moreover, the latter fact also implies that the restriction of $\pi$ to $Z$ has no critical point.

Finally, since $\pi$ is a stratified function with no critical point, first Morse Lemma for manifolds with corners yields that $C_{Q}(f_{0}) \times \{0\}$ and $C_{Q}(f_{1}) \times \{1\}$ are homeomorphic.
\end{proof}

Before stating Proposition~\ref{Jens}, let us observe the following fact proved in  Appendix~\ref{sec:proof_intersection_nablas}.
\begin{lem}
\label{lem:intersection_nablas}
	If \(F_1\) and \(F_2\) are two distinct faces of \(Q=\ch(\A)\), then the set \(\tilde{\nabla}_{\A_{F_1}}\cap\tilde{\nabla}_{\A_{F_2}}\) has codimension at least \(2\) in \(\RA\).
\end{lem}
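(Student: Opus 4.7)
The strategy is to reduce to the case where both $F_1$ and $F_2$ are non-pyramidal and non-defective, and then exhibit a coordinate direction in which $\tilde{\nabla}_{\A_{F_2}}$ is automatically cylindrical while $\tilde{\nabla}_{\A_{F_1}}$ is not, so that the two codimension-one definable sets must meet transversely. First, dispose of the degenerate cases: if $\A_{F_i}$ is pyramidal then, by the warm-up, $\tilde{\nabla}_{\A_{F_i}}$ is empty, and if $F_i$ is defective then $\tilde{\nabla}_{\A_{F_i}}$ already has codimension at least $2$ in $\R^{\A}$ by definition; in either situation the conclusion is immediate. So assume both $F_1, F_2$ are non-pyramidal and non-defective, so each $\tilde{\nabla}_{\A_{F_i}}$ has codimension exactly one. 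Since the vertices of any face of $Q$ are vertices of $Q$ and hence lie in $\A$, a vertex of $F_1$ not belonging to $F_2$ produces an element of $\A_{F_1}\setminus \A_{F_2}$; thus the distinctness of $F_1, F_2$ yields some $a^* \in \A_{F_1} \setminus \A_{F_2}$. Because $a^* \notin \A_{F_2}$, the set $\tilde{\nabla}_{\A_{F_2}}$ does not depend on the coordinate $c_{a^*}$ and is therefore invariant under translation in the direction $e_{a^*}$.

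The key claim is that at a generic smooth point of $\tilde{\nabla}_{\A_{F_1}}$, the tangent space does \emph{not} contain $e_{a^*}$. I would prove this via the Horn--Kapranov parametrization $\Phi_{\A_{F_1}}(x,y) = (x^{-a}\langle B_a, y\rangle)_{a \in \A_{F_1}}$ from Theorem~\ref{T:Horn}, after an affine reduction bringing the ambient dimension down to $d = \dim \A_{F_1}$. A direct Jacobian computation, using the singular-point identity $\sum_a a_i c_a x^a = x_i \partial_i f = 0$, shows that $\lambda \in \R^{\A_{F_1}}$ lies in the left null space of $J_{\Phi_{\A_{F_1}}}$ at $(x,y)$ if and only if $\lambda_a = x^a\bigl(\mu_0 + \sum_l \mu_l a_l\bigr)$ for some $\mu \in \R^{d+1}$ whose last $d$ coordinates $\mu' = (\mu_1,\ldots,\mu_d)$ lie in the kernel of the log-Hessian matrix $M_{i,l} = \sum_a a_i a_l c_a x^a$. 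Non-defectiveness of $F_1$ is equivalent to $\Phi_{\A_{F_1}}$ attaining its maximal rank $|\A_{F_1}|-1$ generically, equivalently to the left null space being one-dimensional at generic $(x,y)$, which forces $M$ to be invertible there and pins down the null space as the single line spanned by $\lambda = (x^a)_{a \in \A_{F_1}}$. Since $\lambda_{a^*} = x^{a^*} > 0$, the vector $e_{a^*}$ is not orthogonal to this line, so it does not lie in the column span of $J_{\Phi_{\A_{F_1}}}$, i.e.\ in the tangent space to $\nabla_{\A_{F_1}}$, at generic points.

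To finish, suppose for contradiction that $\tilde{\nabla}_{\A_{F_1}} \cap \tilde{\nabla}_{\A_{F_2}}$ has codimension $1$ in $\R^{\A}$. Then it contains an open codimension-one piece $V$ lying in some top-dimensional smooth stratum $S$ of $\tilde{\nabla}_{\A_{F_1}}$; along $V$, the tangent space to $S$ must coincide with the tangent space to $\tilde{\nabla}_{\A_{F_2}}$, which contains $e_{a^*}$ by cylindricity. But by the claim, the locus of points of $S$ where $e_{a^*}$ lies in the tangent space is a definable set that is nowhere dense in $S$, so it cannot contain the non-empty open set $V$, a contradiction. Hence $\tilde{\nabla}_{\A_{F_1}} \cap \tilde{\nabla}_{\A_{F_2}}$ has codimension at least $2$ in $\R^{\A}$. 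The main obstacle is the Jacobian computation in the second paragraph, in particular identifying the generic left null space and translating non-defectiveness into the invertibility of the log-Hessian matrix at generic points; the rest is bookkeeping about pyramids, defective faces, and the elementary fact that distinct faces of $Q$ intersect $\A$ in distinct subsets.
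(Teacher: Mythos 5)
Your proposal is correct and reaches the conclusion, but it takes a genuinely different route from the paper's. The paper packages the whole intersection as the zero set of a single map $\psi:(\R^*)^{\A}\times(\R_{>0})^{d_1}\times(\R_{>0})^{d_2}\to\R^{d_1+d_2+2}$ built from the block-diagonal matrix $\begin{pmatrix}\mA_1 & 0\\ 0 & \mA_2\end{pmatrix}$, and then shows directly that a well-chosen $(d_1+d_2+2)$-minor of $J\psi$ is nonzero on the relevant locus: after Cauchy--Binet and Laplace expansions this minor factors, with $P_{\A_{F_1}}(z)$ as the nontrivial factor, so that full rank holds wherever $P_{\A_{F_1}}\neq 0$ and $\psi^{-1}(0)$ has dimension at most $\lvert\A\rvert-2$ there. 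You instead decouple the two discriminants: you observe that $\tilde\nabla_{\A_{F_2}}$ is a cylinder in the direction $e_{a^*}$ for any $a^*\in\A_{F_1}\setminus\A_{F_2}$, compute the left null space of the Horn--Kapranov Jacobian for $F_1$ alone to show that $e_{a^*}$ is generically not tangent to $\tilde\nabla_{\A_{F_1}}$, and conclude by a tangent-space clash. The algebraic engine is the same in both proofs — your matrix $M=\hat\mA_1\,\Diag(c_ax^a)\,\hat\mA_1^{T}$ has determinant exactly the cuspidal form $P_{\A_{F_1}}$ (by Cauchy--Binet), and in both arguments non-defectiveness via Forsg{\aa}rd is invoked precisely to get $P_{\A_{F_1}}\not\equiv 0$. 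What the two approaches buy: the paper's direct full-rank computation avoids any appeal to generic smooth strata of the definable sets and is immediately rigorous, at the cost of heavier multilinear algebra on the joint Jacobian; your route is more conceptual and makes it transparent \emph{why} the intersection drops dimension (one set is a cylinder, the other is generically transverse to the cylinder direction), but the final contradiction step needs a bit more o-minimal bookkeeping than you spell out (one has to argue that the image of the $\{\det M=0\}$ locus, the singular loci of both $\tilde\nabla_{\A_{F_i}}$, and the boundary of $\accentset{\circ}\nabla_{\A_{F_1}}$ inside its closure all have codimension at least two, so that a putative codimension-one piece $V$ must meet the good set). Also note that the existence of $a^*\in\A_{F_1}\setminus\A_{F_2}$ is only guaranteed after possibly swapping $F_1$ and $F_2$ (if $F_1\subsetneq F_2$ then $\A_{F_1}\subset\A_{F_2}$); since your key claim concerns $F_1$ alone, this WLOG should be stated explicitly, as the paper does with ``by symmetry''.
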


It is useful to consider the following sets. For any face $F$, define
\({\overset{\frown}{\nabla}}_{\A_{F}} = \{ c \in \RA \mid (c_{a})_{a \in \A_{F}} \in \accentset{\circ}{\nabla}_{\AF}\}\) and set
	\[
		\accentset{\circ}{\nabla} = \bigcup_{F\text{ face of }\A} {\overset{\frown}{\nabla}}_{\A_{F}}.\]
It follows that \(\tilde{\nabla}_{\A_F}\) is the Euclidean closure of \({\overset{\frown}{\nabla}}_{\A_{F}}\) and $\nabla$ is the Euclidean closure of $\accentset{\circ}\nabla$.

A point \(f\) of \(\nabla\) is called {\em smooth} if there exists a unique face \(F\) of \(Q\) such that \(f \in {\overset{\frown}{\nabla}}_{\A_{F}}\)  and \(f\) is a smooth point of \({\overset{\frown}{\nabla}}_{\A_{F}} \).
By~\cite{For19} (Theorem 3.5), the hypersurface $f^F=0$ has then a unique real singular point which is a non-degenerate double point in $X_{\A_{F}}^{\circ}(\R_{>0})$ (we can assume that $X_{\A_{F}}^{\circ}(\R_{>0})=\R_{>0}^{\dim F}$ sending $\A_{F}$ to a coordinate subspace by an affine transformation if necessary).
The previous lemma ensures that smooth points are generic in \(\nabla\) as soon as there is a non-defective face.

\begin{prop}
\label{Jens}
Assume that $f_{0}, f_{1} \in \R_{\mathrm{gen}}^{\A}$ are connected by a smooth path $c_t=(f_{t})_{t \in [0,1]} \subset \R^{\A}$ which intersects $\nabla$ at only one point $f_{t_{0}}$ which is a smooth point of $\nabla$. Assume furthermore that $\frac{\partial f_{t}^{F}}{\partial t}(x_0,t_0)\neq 0$, where $x_0$ is the singular point of $f_{t_{0}}^F=0$ contained in $X_{\A_{F}}^{\circ}(\R_{>0})$.
Then we have $|b_{0}(V_{>0}(f_0))-b_{0}(V_{>0}(f_1))|\leq 1$.
\end{prop}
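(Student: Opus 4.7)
The plan is to adapt the proof of Proposition~\ref{isotopic} and replace the first Morse lemma for submersions by a Morse-theoretic analysis at the single critical point of $\pi$ on $\hat{C}$. Since $(f_t)_{t \in [0,1]}$ meets $\nabla$ only at $t_0$, every $f_t$ with $t \neq t_0$ lies in $\R_{\mathrm{gen}}^{\A}$, and Proposition~\ref{isotopic} applied on $[0, t_0 - \epsilon]$ and on $[t_0 + \epsilon, 1]$ reduces the claim to proving $|b_0(V_{>0}(f_{t_0-\epsilon})) - b_0(V_{>0}(f_{t_0+\epsilon}))| \leq 1$ for arbitrarily small $\epsilon > 0$. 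This first step is pure localisation.

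Next, form the stratified space $\hat{C} = \{(x,t) \in Q \times [t_0-\epsilon, t_0+\epsilon] : x \in C_Q(f_t)\}$ together with the projection $\pi : \hat{C} \to [t_0-\epsilon, t_0+\epsilon]$, exactly as in the proof of Proposition~\ref{isotopic}. Away from $(x_0, t_0)$ the argument there goes through verbatim: each stratum is a smooth manifold and $\pi$ has no critical point on it, because for $t \neq t_0$ the polynomial $f_t$ lies in $\R_{\mathrm{gen}}^{\A}$, and at a point $(x,t_0)$ with $x \neq x_0$ or on a face $F' \neq F$, the hypothesis that $f_{t_0}$ is a smooth point of $\nabla$ lying on a unique ${\overset{\frown}{\nabla}}_{\A_{F'}}$ excludes any other critical locus. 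Hence the only critical point of $\pi$ is $(x_0, t_0)$, located on the stratum corresponding to $\mbox{relint}(F)$.

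The third step is to show that this critical point is \emph{nondegenerate} in the Morse sense. By \cite[Theorem~3.5]{For19}, $x_0$ is an ordinary double point of $f_{t_0}^F = 0$ in $X_{\A_F}^{\circ}(\R_{>0})$, so the Hessian of the local defining function $F(x,t) := f_t^F(x)$ with respect to $x$ at $(x_0,t_0)$ is nondegenerate. Combined with $\frac{\partial f_t^F}{\partial t}(x_0, t_0) \neq 0$, this gives that $\hat{C}$ is locally a smooth hypersurface of the stratum near $(x_0,t_0)$ and that $\pi$ restricted to it has a nondegenerate critical point in the sense of classical Morse theory (if $F = Q$) or of stratified Morse theory in the sense of \cite{Goresky} (if $F \subsetneq Q$).

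Finally, crossing a single nondegenerate critical point of a smooth function on a smooth manifold changes a level set by attachment of a standard handle $D^{\lambda} \times D^{d-\lambda}$; such an attachment changes $b_0$ by $+1$ if $\lambda = 0$, by $-1$ or $0$ if $\lambda = 1$, and by $0$ otherwise. When $F = Q$ this immediately gives the bound. When $F \subsetneq Q$, one invokes the local structure theorem of stratified Morse theory: the topological change decomposes as tangential Morse data (the handle above) crossed with normal Morse data (the transverse link of the stratum), and one verifies that the induced change on the top-dimensional open stratum $C_Q(f_t) \cap \mbox{relint}(Q) \simeq V_{>0}(f_t)$ still modifies $b_0$ by at most one. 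The main technical difficulty is precisely this last case, where the singularity lies ``at infinity'' on a boundary face of $Q$ and one must trace how a connected component of $V_{>0}(f_t)$ either attaches to or detaches from $F$ as $t$ crosses $t_0$; this is where the stratified framework of \cite{Goresky} is essential.
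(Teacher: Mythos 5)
Your steps 1--3 match the paper's strategy exactly: localize around $t_0$, build $\hat{C}$ with the projection $\pi$, and identify the unique nondegenerate (stratified) critical point. The problem is step 4. A nondegenerate critical point of index $\lambda$ changes the \emph{sublevel set} by attaching a handle $D^\lambda \times D^{d-\lambda}$, and your list ``$+1$ if $\lambda=0$, $-1$ or $0$ if $\lambda=1$, $0$ otherwise'' is the correct effect of handle attachment on $b_0$ of the \emph{sublevel} set. But Proposition~\ref{Jens} concerns the \emph{level sets} $V_{>0}(f_t) \cong \pi^{-1}(t)$, which change by surgery: remove $S^{\lambda-1}\times D^{d-\lambda}$, glue in $D^{\lambda}\times S^{d-\lambda-1}$. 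For level sets the tabulation by $\lambda$ is different --- at a maximum ($\lambda=d$) a sphere component disappears and $b_0$ drops by $1$, and at $\lambda=d-1$ the surgery can split one component into two so $b_0$ goes \emph{up} by $1$ --- so ``$0$ otherwise'' is false precisely in the cases that matter, and the assertion that ``when $F=Q$ this immediately gives the bound'' does not hold as written.

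The paper avoids the surgery bookkeeping by arguing directly on the level sets: near the ordinary double point, $V_{>0}(f_{t_0\pm\varepsilon})\cap U$ is homeomorphic to a level set of the Hessian quadratic form, hence has at most two connected components, and each local component lies in at most one global component of $V_{>0}(f_{t_0\pm\varepsilon})$. Either one side meets $U$ in the empty set (definite Hessian, the other side gains exactly one compact sphere component), or both sides meet $U$ in $1$ or $2$ components, and in each case the global change in $b_0$ is at most $1$. For $F\subsetneq Q$ your sketch points in the right direction but omits the decisive transversality input: for $i=t_0\pm\varepsilon$ the hypersurface $\{f_i=0\}$ meets $X_{\A_F}^{\circ}(\R_{>0})$ transversally along $\{f_i^F=0\}$, so each component of $C_Q(f_i)\cap V\cap F$ lies in the closure of at most one component of $C_Q^{\circ}(f_i)$; this is what lets the paper transfer the count on the boundary face back to the open chart and conclude.
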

\begin{proof}
The proof is similar to the proof of Proposition~\ref{isotopic} using $\hat{C}= \{ (x, t) \in Q\times [0,1] \mid x \in C_Q(f_t)\}$.
The first condition ensures that the stratified function $\pi$ has only one critical point $(z_0,t_0)$ which is non-degenerate and contained in the stratum $Z$
corresponding to $F$. The second condition implies that $Z$ is smooth at $(z_0,t_{0})$ and thus $\hat{C}$ is a manifold with corners.
Consequently, by Proposition~\ref{isotopic} there may be a topological change in the level sets of $\pi$ only around the critical value $t_{0}$. 
Consider a neighbourhood of the critical point $(z_0,t_{0})$ of the cylindrical form $V \times I$ (where \(I = [t_0-\varepsilon,t_0+\varepsilon]\) with \(\varepsilon>0\) small enough) and let $U ={(\mu \circ \varphi_{\A})}^{-1}(V) \subset \R_{>0}^{n}$. 
If $F=Q$,  it is sufficient to look directly at the hypersurfaces $V_{>0}(f_{t}) \subset \R_{>0}^{n}$ for $(x,t) \in U \times I$. Then the proof goes as the proof of Theorem 3.14 in~\cite{FNR17}. Shrinking $V$ and $I$ if necessary, the hypersurfaces $V_{>0}(f_{t}) \cap U$  are homeomorphic to the level sets of a non degenerate quadratic form (given by the Hessian of $f_{t_{0}}$ at the singular point), from which we get $\lvert b_{0}(V_{>0}(f_{t_0-\varepsilon}) \cap U)\rvert \leq 2$ and $\lvert b_{0}(V_{>0}(f_{t_0+\varepsilon}) \cap U)\rvert\leq 2$. Let \(i \in \{t_0-\varepsilon,t_0+\varepsilon\}\). Since a connected component of $V_{>0}(f_i) \cap U$ is contained in at most one connected component of  $V_{>0}(f_i)$, there are at most two connected components of $V_{>0}(f_i)$ which intersect \(U\). Moreover, for $\sigma \in \{\pm 1\}$, if the intersection of \(V_{>0}(f_{t_0+\sigma \epsilon})\) with \(U\) is empty (for \(U\) small enough), then the signature of the above quadratic form is either \((n,0)\) or \((0,n)\) and so \(V_{>0}(f_{t_0-\sigma \epsilon})\) has only one compact connected component more (lying inside \(U\)). In other cases, the result follows from the fact that for both \(i \in \{t_0 \pm \varepsilon\}\), there are only \(1\) or \(2\) connected components of \(V_{>0}(f_i)\) which intersect \(U\).

Assume now that $F$ is a proper face of $Q$. Recall that $C_{Q}(f_{t}) \cap F=C_{F}(f_{t}^{F})$ (by Corollary~\ref{strata for Z}). Thus the previous result applied to $f_{t}^{F}$ gives
$|b_{0}(C_{Q}(f_{t_0-\varepsilon}) \cap F \cap V)-b_{0}(C_{Q}(f_{t_0+\varepsilon}) \cap F \cap V)| \leq 1$ (shrinking $V$ if necessary).
But since $\{f_{i}^{F}=0\}$, $i=t_0\pm \varepsilon$, has no singular point in $X_{\A_{F}}^{\circ}(\R_{>0})$, each hypersurface $\{f_{i}=0\} \subset X_{\A}(\R_{>0})$ intersects $X_{\A_{F}}^{\circ}(\R_{>0})$ transversally along $\{f_{i}^{F}=0\}$. Thus $C_{Q}(f_{i})$ intersects transversally  $\mbox{relint}(F)$ along $C_{F}^{\circ}(f_{i}^{F})$, and thus a connected component of $C_{Q}(f_{i}) \cap V \cap F$ is contained in the closure of at most one connected component of $C_{Q}^{\circ}(f_{i})$. As a consequence, reasoning as in the case \(F=Q\) yields $|b_{0}(C_{Q}^{\circ}(f_{0}))-b_{0}(C_{Q}^{\circ}(f_{1}))| \leq 1$ and thus the desired inequality.
\end{proof}

\begin{rem}
\label{R:no restrictive}
We will apply Proposition \ref{Jens} to Viro polynomials $f_t(x)=\sum_{a \in \A} c_at^{h_a}x^a$. The condition $\frac{\partial f_{t}^{F}}{\partial t}(x_0,t_0)\neq 0$ in Proposition \ref{Jens} is then fulfilled when all polynomials $f_t^F(x)$ seen as polynomials in $n+1$ variables $(x,t)$ define non-singular hypersurfaces in $\R_{>0}^{n+1}$, which is the case for generic coefficients $c_a$.
\end{rem}

\subsection{Viro polynomials and critical systems}\label{Viro polynomials}

In the following, we fix an ordering \(\A = \{a_0,\ldots,a_{d+k}\}\) of the elements of $\A$.
Let $c\in (\R^{*})^{\A}$. Consider a function $h:\A\rightarrow \R, \ a\mapsto h_a$. For simplicity, we will often write \(c_i\) instead of \(c_{a_i}\) and $h_{i}$ instead of $h_{a_{i}}$. Consider the matrix 
\[\mAh=\begin{pmatrix}
1 & \cdots & 1\\
| & \cdots& |\\
a_0 & \cdots & a_{d+k}\\
| & \cdots& |\\
h_{0} & \cdots & h_{d+k}\\
\end{pmatrix}. \] 
We will call \(\mAh\) the {\em \(h\)-exponent matrix} associated to \(\A\) and \(h\).
It is the exponent matrix of 
\begin{equation}
	\label{Viro}
	f_{t}(x) =\sum_{a \in \A} c_a t^{h_a}x^{a}.
\end{equation}
seen as a (generalized) polynomial in $n+1$ variables $(x,t)$. The polynomial \(f_t \in \R[x]\) (where \(t\) is a parameter) is usually called Viro polynomial.
Consider the path $(f_{t})_{t \in ]0,+\infty[}$ in the space of polynomials with supports in $\A$. Recall that we identify $f_t$ with its coefficients
$c_t=\left(c_a t^{h_a}\right)_{a\in \A}$.
We are particularly interested by the polynomial $f$ given by $t=1$
with coefficients $c=(c_{a})_{a\in \A}$.
Note that $(c_t)_{t\in ]0,+\infty[}$ is a smooth path contained in one orthant of $(\R^*)^{\A}$.

For any face \(F\) of \(Q = \ch(\A)\), consider the truncation of $f_t$ to $F$:
$$f^{F}_{t}(x) =\sum_{a\in \AF} c_at^{h_a}x^a
.$$

Denote by $\AhF$ the set  of points $(a,h_{a})$ for $a \in \AF$ and by $\mAF$ (resp., $\mAhF$) the submatrix of $\mA$ (resp., $\mAh$) obtained by removing the columns which do not correspond to elements of $\AF$. 

The critical system associated to $f^F_{t} \in \R[t][x]$ is the system
\begin{equation}\label{Stilde_F}
 \ f_t^{F}=x_1\frac{\partial f_t^{F}}{\partial x_1} = \cdots =x_n\frac{\partial f_t^{F}}{\partial x_n}=0.\tag{\(\tilde{S}_F\)}
 \end{equation}
Thus $(x,t)$ is a positive solution of \eqref{Stilde_F} if and only if $x$ is a singular point of the hypersurface defined by $f_t^{F}$ in $(\R_{>0})^n$.

\begin{rem}
\label{translation}
Critical systems associated to a polynomial $f$ and its product $x^{a} \cdot f$ by a monomial have the same solutions in $\R_{>0}^{n}$ (more generally in $(\C^{*})^{n}$).
\end{rem}

System \eqref{Stilde_F} seen as a generalized polynomial system in $n+1$ variables $(x,t)$ has exponent matrix $\mAhF$.
It is worth noting that the set of positive solutions $(x,t)$ of~\eqref{Stilde_F} has positive dimension when $\dim F <n$. Let \(m = \dim(F)\) and send $\AF$ onto the \(m\)-coordinate subspace of \(\R^n\) given by the first $m$ coordinates using an affine transformation. This amounts to do a monomial change of coordinates so that \(f_t^F\) becomes a \(m\)-variate polynomial \(g_t^F\) whose exponent vectors
are the images by the previous affine transformation of the vectors in $\AF$ and the coefficients are unchanged (see Section \ref{monomial}).
Keeping the notation $(x_1,\ldots,x_n)$ for the new coordinates, we have $g^F_{t} \in \R[t][x_1,\ldots,x_m]$ and the corresponding critical system is
equivalent to
\begin{equation}\label{S_F}
	\ g_t^{F}=x_1\frac{\partial g_t^{F}}{\partial x_1} = \cdots =x_m\frac{\partial g_t^{F}}{\partial x_m}=0,\; \mbox{where $m=\dim F$.} \tag{\(S_F\)}
\end{equation}
Thus the set of positive solutions of \eqref{Stilde_F} is the product of the set of positive solutions
of \eqref{S_F} by $(\R_{>0})^{n-m}$.
\begin{rem}\label{same Gale dual}
 The set of positive solutions of the system~\eqref{S_F} does not depend on the choice of the previous affine transformation up to diffeomorphism, see Section \ref{monomial}.
 \end{rem}
 To summarize we have the following result.
 \begin{prop}
For any face $F$ of $Q$ we have $c_t \in {\overset{\frown}{\nabla}}_{\A_{F}}$ if and only if there exists $x \in (\R_{>0})^m$
such that $(x,t)$ is positive solution of \eqref{S_F}.
\end{prop}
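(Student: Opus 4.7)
The plan is to chain three equivalences, each essentially a matter of unfolding definitions. The proposition is really a translation statement: it rephrases the set-theoretic condition $c_t \in {\overset{\frown}{\nabla}}_{\A_{F}}$ into the polynomial-system language used throughout the rest of the paper, so no deep new content is needed — only a careful bookkeeping of what each symbol means.

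First I would unfold the definition of ${\overset{\frown}{\nabla}}_{\A_{F}}$: the condition $c_t \in {\overset{\frown}{\nabla}}_{\A_{F}}$ holds iff the restriction of $c_t$ to $\A_F$, which is the tuple $(c_a t^{h_a})_{a \in \A_F}$, lies in $\accentset{\circ}{\nabla}_{\A_F}$. By the very definition of the open positive discriminant variety, this is in turn equivalent to saying that the hypersurface $\{f_t^F = 0\}$ has a singular point in the appropriate positive orthant.

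Next I would rewrite that singularity condition as a critical system. A positive point $x$ is a singular point of $\{f_t^F = 0\}$ iff $f_t^F(x) = 0$ and $\partial f_t^F/\partial x_i(x) = 0$ for every $i$. Since each coordinate $x_i$ is strictly positive, multiplying the $i$-th derivative equation by $x_i$ neither adds nor removes positive solutions, so the singularity condition is equivalent to $(x,t)$ being a positive solution of \eqref{Stilde_F}. Finally, I would invoke the monomial change of coordinates already spelled out just before the statement: applying an affine transformation sending $\A_F$ into the first $m$ coordinates of $\R^n$ (with $m=\dim F$) converts $f_t^F$ into $g_t^F$ with the same coefficients, and correspondingly converts \eqref{Stilde_F} into \eqref{S_F} decoupled from a trivial $(\R_{>0})^{n-m}$ factor consisting of the free variables $x_{m+1},\dots,x_n$. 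Since such monomial changes of coordinates are diffeomorphisms of the positive orthant, the positive solution sets of \eqref{Stilde_F} and \eqref{S_F} correspond bijectively up to this free factor, and in particular one is non-empty exactly when the other is.

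Chaining the three equivalences yields the statement. The only point that requires care — and it is not really an obstacle — is that the discriminant $\accentset{\circ}{\nabla}_{\A_F}$, defined intrinsically from the configuration $\A_F$, must coincide with the singular locus seen in the reduced $m$-variable picture after the monomial change of coordinates. This is exactly what the material on monomial changes of coordinates (Section~\ref{monomial}) together with Remark~\ref{same Gale dual} provides, namely that the reduction is canonical up to a diffeomorphism of the relevant positive orthants.
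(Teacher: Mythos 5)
Your proof is correct and follows essentially the same route as the paper: the paper does not give a separate proof of this proposition but instead labels it a summary of the immediately preceding discussion, which unfolds exactly as you describe — the definition of $\overset{\frown}{\nabla}_{\A_F}$, the identification of singular points of $\{f_t^F=0\}$ in $(\R_{>0})^n$ with positive solutions of \eqref{Stilde_F}, and the passage to \eqref{S_F} via the affine transformation sending $\A_F$ to an $m$-coordinate subspace, which produces the $(\R_{>0})^{n-m}$ product factor. Your chain of three definitional equivalences, together with the appeal to the invariance of $\accentset{\circ}{\nabla}_{\A_F}$ under monomial changes of coordinates, matches the paper's reasoning.
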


We will also need in the following that adding the \(h\)-row increases the rank by \(1\). 
\begin{de}\label{de:h}
	The function \(h\) is said {\em compatible} if for every non pyramidal face \(F\) of \(Q\) we have \(\rk(\mAhF) = \rk(\mAF)+1\), or equivalently, $\codim \AhF=\codim \AF-1$.
\end{de}
Not being compatible, means that there exists a non pyramidal face \(F\) such that the vector \((h(a))_{a \in \AF}\) is a linear combination of the rows of \(\mAF\), that is to say, since $\AF$ has positive codimension, the vector $(h(a))_{a \in \A}$ belongs to some hyperplane of \(\R^{\A}\). Consequently, \(h\) is compatible as soon as it lies outside of the union of at most \(2^{\lvert \A \rvert}\) hyperplanes, which is satisfied for \(h\) generic enough.

\section{Bounds for positive solutions of critical systems}\label{Critical systems}
\subsection{Gale duality for critical systems}\label{Gale duality}

Under the assumption that $h$ is compatible, the codimension of the support $\A_{F}^{h}$ of the critical system \eqref{S_F} for the Viro polynomial $f_{t}^{F}$ is one less than the codimension of the support of $f^{F}$.
For instance, if the support of $f^{F}$ has codimension $2$, then the codimension of the support of the critical system \eqref{S_F} is $1$. Polynomial systems with support a set of codimension $1$ have been widely studied using Gale duality (for example~\cite{Bih07,BD,BDF21}). Gale duality for polynomial systems was introduced in \cite{BS07} (see also \cite{BS08}). Our main reference here will be \cite{BDG2}, Section 2.

Consider the critical system depending on \(n+1\) variables \((x,t)\):
\begin{equation}\label{S}
	\ f_{t}=x_1\frac{\partial f_{t}}{\partial x_1} = \cdots =x_n\frac{\partial f_{t}}{\partial x_n}=0\tag{\(S\)}
\end{equation}
where \(f_t\) is a Viro polynomial of support \(\A \subseteq \R^n\), \(n\) is the dimension of \(\A\) and the exponent matrix \(\mAh\) has rank \(n+2\). We will continue to denote the matrix associated to \(\A\) by \(\mA\) and the codimension of \(\A\) by \(k\).
We saw in Section \ref{Viro polynomials} that any critical system \eqref{S_F} is a particular case of \eqref{S} (taking $\A=\A_F$).
The coefficient matrix of~\eqref{S} is the matrix $\mC$ such that~\eqref{S} can be written as
$\mC \cdot (t^{h_0}x^{a_0},\ldots,t^{h_{n+k}}x^{a_{n+k}})^{T}=0$.
There is a basic necessary condition for~\eqref{S} to have a positive solution.
Given a solution $(x,t) \in \R_{>0}^{n+1}$ of~\eqref{S} 
the column matrix $v$ of entries $t^{h_{a}}x^{a} > 0$ belongs to the kernel of $\mC$. Let $\mD$ be a matrix Gale dual to $\mC$. Thus there exists $u \in \R^{k \times 1}$ such that $v=\mD\cdot u$, and we get that $\langle D_{i}, u \rangle >0$ for $i=0,\ldots,n+k$. Thus the row vectors $D_{i}$ of $\mD$ belong to an half space passing through the origin.
\begin{lem}\label{necessary}
If~\eqref{S} has a positive solution, then there exists a non-zero vector $y \in \R^{k}$ such that
$\langle D_{i}, y \rangle >0$ for $i=0,\ldots,n+k$, that is, the vectors $D_{i}$ belong to some open half space passing through the origin.
\end{lem}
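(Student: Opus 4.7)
The statement essentially records what was already observed in the paragraph preceding it, so the plan is just to organize that observation cleanly.

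The plan is to start from a positive solution $(x,t)\in\R_{>0}^{n+1}$ of~\eqref{S} and form the column vector
\[
v \;=\; \bigl(t^{h_{0}}x^{a_{0}},\,\ldots,\,t^{h_{n+k}}x^{a_{n+k}}\bigr)^{T}\in\R_{>0}^{n+k+1}.
\]
Since $x_i>0$ and $t>0$, every coordinate of $v$ is strictly positive. The critical system~\eqref{S} is by definition the linear system $\mC\cdot v=0$ in the unknowns $v$, so $v\in\ker\mC$.

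Next I would use the defining property of Gale duality. Because $\mD$ is Gale dual to $\mC$, its columns form a basis of $\ker\mC$; equivalently, the map $u\mapsto \mD\cdot u$ from $\R^{k}$ to $\ker\mC$ is a linear isomorphism. Hence there exists $u\in\R^{k}$ with $v=\mD\cdot u$. Reading this equality coordinate by coordinate gives
\[
t^{h_{i}}x^{a_{i}} \;=\; \langle D_{i},u\rangle \qquad\text{for } i=0,\ldots,n+k,
\]
where $D_{i}$ denotes the $i$-th row of $\mD$. Since the left-hand side is strictly positive, this yields $\langle D_{i},u\rangle>0$ for all $i$.

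It only remains to check that $u$ is nonzero. If $u=0$ then $v=\mD\cdot u=0$, contradicting the fact that every entry of $v$ is positive. Setting $y:=u$ therefore gives a nonzero vector in $\R^{k}$ such that all $\langle D_{i},y\rangle$ are positive, which is exactly the assertion that the rows $D_{0},\ldots,D_{n+k}$ lie in a common open half-space through the origin. There is no serious obstacle: the only mild point is remembering to justify $u\neq0$, which follows immediately from the full column rank of $\mD$ and the positivity of $v$.
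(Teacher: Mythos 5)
Your proof is correct and follows exactly the argument the paper gives in the paragraph preceding the lemma: form the positive vector $v=(t^{h_i}x^{a_i})_i\in\ker\mC$, write $v=\mD u$ using Gale duality, and read off $\langle D_i,u\rangle>0$ for all $i$. Your added remark that $u\neq 0$ (from full column rank of $\mD$ and $v\neq 0$) is a small but welcome completion of the argument.
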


A simple computation shows that the coefficient
matrix $\mC$ of~\eqref{S} is the matrix obtained by multiplying the $i$-th column of $\mA$ by $c_i$ for $i=0,\ldots,n+k$. As a consequence, a Gale dual matrix $\mD$ of $\mC$ is obtained from a Gale dual matrix $\mB$ of $\mA$ by dividing the $i$-th row of $\mB$ by $c_{i}$, that is,
\begin{equation}\label{D_{i}}
 D_{i}=\frac{1}{c_{i}} B_{i}, \quad i=0,\ldots,n+k.
\end{equation}

A first consequence is that if \(\A\) is pyramidal, then \(\mB\) will contain a zero row. So it will also be the case for \(\mD\). Then Lemma~\ref{necessary} directly implies that the critical system has no positive solutions, and we recover the well-know result:

\begin{cor}\label{cor:pyramidal}
	If \(\A\) is pyramidal, then the system~\eqref{S} has no positive solution.
\end{cor}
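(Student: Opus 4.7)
My plan is to derive a contradiction by combining the necessary condition for positive solutions from Lemma~\ref{necessary} with the structural consequence of pyramidality on the Gale dual matrix~$\mB$. The starting observation is already packaged in the matroid-theoretic discussion preceding Remark~\ref{rem:npfaces}: being a pyramid over a proper subset $\S \subsetneq \A$ is equivalent to the vanishing of the rows $\{B_a : a \in \A \setminus \S\}$ of any Gale dual matrix $\mB$ of $\mA$. In particular, if $\A$ itself is a pyramid, then there is at least one index $i \in \{0,\ldots,n+k\}$ for which $B_i = 0$.

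From here, I would invoke formula~\eqref{D_{i}}, which expresses each row of the Gale dual $\mD$ of the coefficient matrix $\mC$ as $D_i = \tfrac{1}{c_i}\,B_i$. Applied to the distinguished index above, this gives $D_i = 0$. Assume now for contradiction that the system \eqref{S} has a positive solution. Lemma~\ref{necessary} then furnishes a non-zero vector $y \in \R^k$ such that $\langle D_j, y\rangle > 0$ for every $j=0,\ldots,n+k$. Specializing to our distinguished index yields $0 = \langle 0, y \rangle > 0$, which is absurd; hence \eqref{S} can have no positive solution.

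The argument introduces no new idea beyond the two ingredients already assembled in the excerpt, namely Lemma~\ref{necessary} and the Gale-duality dictionary that identifies pyramidality with a zero row of $\mB$. The only step deserving care is the bookkeeping check that this dictionary really applies in the form stated, since the matroid duality and the coefficient-scaling relation~\eqref{D_{i}} have to be invoked simultaneously; once that is verified, no genuine obstacle remains.
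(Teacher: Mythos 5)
Your argument is exactly the paper's: the text immediately preceding Corollary~\ref{cor:pyramidal} observes that pyramidality forces a zero row of $\mB$, hence of $\mD$ via~\eqref{D_{i}}, and that Lemma~\ref{necessary} then rules out positive solutions. Your write-up simply spells out the contradiction with the inner product, so the proposal is correct and follows the same route.
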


Let $\lambda \in \R^{(n+k+1) \times (k-1)}$ be any Gale dual matrix of $\mAh$.
Note that the kernel of $\mAh$ is contained in the kernel of $\mA$, thus any column of $\lambda$ is a linear combination of columns of $\mB$.

The \emph{Gale system} associated to~\eqref{S} is
defined by

\begin{equation}\label{G}
\prod_{i=0}^{n+k} {\langle  D_{i}, y \rangle}^{\lambda_{i,j}}=1, \quad j=1,\ldots,k-1.\tag{\(G\)}
\end{equation}

We might alternatively write $\frac{1}{c_{i}} \langle  B_{i}, y \rangle$ instead of $\langle  D_{i}, y \rangle$.
Note that \eqref{G} does not depend on the numbering of the elements of $\A$. Also, it can be noticed that, up to linear change of coordinates, the set of solutions of~\eqref{G} does not depend on the choice of the Gale dual matrices \(\mB\) and \(\mD\). Moreover, the equations of the system are homogeneous of degree zero since the columns of $\mB$ sum up to zero.

\begin{rem}
We already noticed that \eqref{S_F} is a particular case of \eqref{S}. Although \eqref{S_F} depends on a choice of an affine transformation,
Gale dual matrices of exponent and coefficients matrices are independent of this choice. This follows from the fact that applying an affine transformation to $\A_F$ amounts to multiply $A_F$ on the left by an invertible matrix, and that $A_F$ and the resulting matrix have obviously the same right kernels, see Section \ref{monomial}.
\end{rem}
Consider the positive cone generated by the rows of $\mD$
\begin{equation}\label{eq:poscone}
{\mathscr{C}}_{\mD}=\R_{>0}D_0+\cdots+\R_{>0}D_{n+k}.
\end{equation}
The dual cone of ${\mathscr{C}}_{\mD}$ is the cone
\begin{equation}\label{eq:Cnu}
{\mathscr C}_{\mD}^{\nu}=\{y \in \R^{k} \, : \, \langle D_i,y \rangle >0, i=0,\ldots, n+k\}.
\end{equation}
For any cone ${\mathscr{C}} \subset \R^n$ with apex the origin, its projectivization $\p{\mathscr{C}}$ is the quotient space ${\mathscr{C}} / {\simeq}$
under the equivalence relation $\simeq$ defined by: for all $y,y' \in {\mathscr{C}}$, 
we have  $y \simeq y'$ if and only if there exists $\alpha>0$
such that $y=\alpha y'$.

We saw that since $\mD$ is Gale dual to $\mC$, for any solution $(x,t)$ of~\eqref{S}, there exists a unique $y \in {\mathscr{C}}_{\mD}^{\nu}$ such that $t^{h_{i}}x^{a_{i}}=\langle D_{i},y \rangle$ for $i=0,\ldots,n+k$. 

Moreover, if \((x_0,t_0), (x_1,t_1)\) are two positive solutions associated to \(y_0\) and \(y_1\) in \({\mathscr{C}}_{\mD}^{\nu}\) such that \(y_0 \simeq y_1\), then there exists \(\alpha >0\) such that, for every \(a \in \A\) we have \(t_0^{h_a}x_0^a = \alpha t_1^{h_a}x_1^{a}\). But it implies that for all \(a\) in \(\A\), \((\ln t_0 -\ln t_1)h_a + \sum_{i=1}^n (\ln x_{0,i} - \ln x_{1,i})a_i = \ln \alpha\). Since \(\mAh\) is of maximal rank, we have that \(\alpha=1\) and \((x_0,t_0) = (x_1,t_1)\).

\begin{theo}[\cite{BDG2} Theorem 2.5 and~\cite{BS08} Theorem 2.1]
\label{Gale bijection}
There is a bijection preserving the multiplicities between the positive solutions of the system~\eqref{S} and the solutions of the Gale dual system~\eqref{G} in $\p{\mathscr{C}}_{\mD}^{\nu}$.
\end{theo}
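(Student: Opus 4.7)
My plan is to construct explicit maps in both directions between the positive solutions of~\eqref{S} and the solutions of~\eqref{G} in $\p{\mathscr C}_{\mD}^{\nu}$, check that they are mutually inverse, and then deduce the preservation of multiplicities from the fact that both constructions are real-analytic local diffeomorphisms.

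The forward map is essentially already built in the discussion preceding the statement. Starting from a positive solution $(x,t)$ of~\eqref{S}, I would observe that the positive vector $v=(t^{h_{i}}x^{a_{i}})_{i}$ lies in $\ker\mC$, hence equals $\mD y$ for a unique $y\in\R^{k}$, and $\langle D_{i},y\rangle=t^{h_{i}}x^{a_{i}}>0$ puts $y$ inside ${\mathscr C}_{\mD}^{\nu}$. To verify that $[y]$ solves~\eqref{G}, I would use the defining relation $\mAh\cdot\lambda=0$: the exponents $\sum_{i}\lambda_{i,j}h_{i}$ and $\sum_{i}\lambda_{i,j}a_{i}$ both vanish, and therefore
\[
  \prod_{i=0}^{n+k}\langle D_{i},y\rangle^{\lambda_{i,j}}=\prod_{i=0}^{n+k}\bigl(t^{h_{i}}x^{a_{i}}\bigr)^{\lambda_{i,j}}=t^{0}x^{0}=1.
\]
Injectivity modulo positive scaling is exactly the rank argument already sketched in the excerpt, using $\rk\mAh=n+2$.

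The inverse map is the delicate step. Given $[y]\in\p{\mathscr C}_{\mD}^{\nu}$ satisfying~\eqref{G}, I pick a representative $y$ and set $v_{i}=\langle D_{i},y\rangle>0$. The goal is to solve the monomial system $t^{h_{i}}x^{a_{i}}=v_{i}$ in $(\R_{>0})^{n+1}$. After taking logarithms this becomes a linear system in $(\log x,\log t)$ with coefficient matrix $\widehat{\mAh}^{T}$, where $\widehat{\mAh}$ is $\mAh$ with its row of ones removed. Since $\mAh$ has $n+2$ rows of rank $n+2$, the matrix $\widehat{\mAh}$ has $n+1$ rows and full row rank $n+1$, so the system has a unique solution as soon as the compatibility conditions $\prod_{i}v_{i}^{\mu_{i}}=1$ hold for every $\mu\in\ker\widehat{\mAh}$. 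I expect this to be the hardest point: $\ker\widehat{\mAh}$ has dimension $k$, but the relations supplied by~\eqref{G} (i.e.\ by the columns of $\lambda$) only span $\ker\mAh$, of dimension $k-1$. Any $\mu^{*}\in\ker\widehat{\mAh}\setminus\ker\mAh$ satisfies $\sum_{i}\mu^{*}_{i}\neq 0$, so the rescaling $y\mapsto\alpha y$ multiplies $\prod_{i}v_{i}^{\mu^{*}_{i}}$ by $\alpha^{\sum_{i}\mu^{*}_{i}}$; a unique $\alpha>0$ therefore makes this product equal to $1$. The rescaled $\alpha y$ still lies in ${\mathscr C}_{\mD}^{\nu}$, so by the previous paragraph $v=\mD(\alpha y)\in\ker\mC$ and the linear system yields a unique $(x,t)\in(\R_{>0})^{n+1}$ satisfying~\eqref{S}.

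The two maps are inverse by construction, giving the bijection. For the multiplicity statement, I would note that both constructions consist of monomial maps, positive rescalings and the linear (log-linear) change of coordinates $\widehat{\mAh}^{T}$, so each is a real-analytic diffeomorphism near any solution; the local intersection multiplicities of~\eqref{S} at $(x,t)$ and of~\eqref{G} at $[y]$ coincide. As a cleaner alternative one may perturb the coefficients $c_{i}$ so that every isolated solution splits into simple ones on both sides and invoke continuity of the bijection in the $c_{i}$. The main obstacle, as flagged above, is the compatibility argument in the inverse map: the projective quotient built into $\p{\mathscr C}_{\mD}^{\nu}$ is precisely what absorbs the single extra linear relation that~\eqref{G} does not supply directly.
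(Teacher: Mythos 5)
Your proposal is correct, and it reconstructs the standard Gale-duality argument: the paper itself does not prove this theorem, citing it instead from \cite{BDG2} and \cite{BS08}, and what you have written is essentially the proof one finds in those sources. You identified precisely the one non-obvious point in the inverse direction: \eqref{G} supplies only $k-1$ multiplicative compatibility conditions (spanning $\ker\mAh$), while $\ker\widehat{\mAh}$ has dimension $k$, and the missing relation is exactly absorbed by the positive rescaling of $y$, which is available because $\sum_i\mu^*_i\neq 0$ for any $\mu^*\in\ker\widehat{\mAh}\setminus\ker\mAh$ and which does not disturb \eqref{G} since the columns of $\lambda$ sum to zero. The only place where your argument is somewhat schematic is the multiplicity claim: asserting that monomial/log-linear changes of coordinates are analytic diffeomorphisms shows the solution \emph{sets} correspond, but preservation of intersection multiplicity further requires tracking how the defining ideals (or the Jacobians) transform under the correspondence, which is not a pure change-of-coordinates statement since the two systems are not simply pullbacks of one another; your fallback of perturbing the $c_i$ to split each solution into simple ones and passing to the limit is the cleaner way to close this and is the route taken in the cited references.
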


We expand further the case $\codim \A=1$ since it will be useful later. Since
$\rk \mAh=\rk \mA+1$, the support $\mAh$ of the associated Viro critical system has codimension \(0\).
Most part of Example \ref{codimension 1} is known (see for instance  \cite{BHDR} and \cite{RR}).

\begin{ex}\label{codimension 1}
Assume $\codim \A=1$. Let $\lambda$ be any non zero vector in the kernel of $\mA$  and let  $c_{t}=(c_at^{h_{a}}) \in (\R^{*})^{\A}$. The corresponding critical system~\eqref{S} 
can be written $\mA \cdot (c_{a}t^{h_{a}}x^a)_{a}^{T}=0$, and $(x,t)$ is a positive solution of~\eqref{S} 
if and only if there exists $y \in \R$ such that 
\begin{equation}\label{Gale k=1}
c_{a}t^{h_{a}}x^{a}=\lambda_{a} y \text{ with } \frac{\lambda_{a}}{c_{a}} \cdot  y >0 \quad \text{for all } a \in \A.
\end{equation}
So we recover the bijective map of Theorem~\ref{Gale bijection}. It follows that~\eqref{S} 
has no positive solution when some $\lambda_{a}$ vanish, which is already known since in that case $\A$ is a pyramid and is thus defective (see also Corollary \ref{cor:pyramidal}). Assume that no $\lambda_{a}$ vanish, in other words, $\A$ is a circuit. It follows that if~\eqref{S} 
has a positive solution then all $c_{a} \lambda_{a}$ are either positive, or negative, which is equivalent to ${\mathscr{C}}_D^{\nu} \neq \emptyset$.
Moreover, if ${\mathscr{C}}_D^{\nu} \neq \emptyset$ then $\prod_{a \in \A} {(\frac{\lambda_{a}}{c_{a}t^{h_{a}}}\cdot y)}^{\lambda_{a}}=1$, which implies that
$\prod_{a \in \A} {(\frac{\lambda_{a}}{c_{a}t^{h_{a}}})}^{\lambda_{a}}=1$ since the coefficients $\lambda_{a}$ sum up to zero. Note that $\sum_{a \in \A} h_{a} \lambda_{a} \neq 0$ for otherwise $\lambda$ would belong to $\ker \mAh=\{0\}$. Thus $t$ is equal to the positive real number
\begin{equation}
\label{tcritical}
t_{\A}=\left(\prod_{a \in \A}
\left(\frac{\lambda_a}
{c_a}\right)^{\lambda_a} \right)^{\frac{1}{\sum_{a \in \A} h_{a} \lambda_{a}}}.
\end{equation} 
Then, fixing $t=t_{\A}$,
we see that any equality of~\eqref{Gale k=1} is a consequence of the others. Thus we can forget one equality of~\eqref{Gale k=1}, say the equality given by $a=a_{n+1}$. Choose another one, say the equality given by $a=a_{0}$, to get rid off $y$ and see that $x$ is a positive solution of a system of the form $x^{a_{i}-a_{0}}=d_i$, where $d_i$ are positive numbers for $i=1, \ldots, n$. The latter system is a system supported on a set of codimension zero, in other words it is a linear system up to a monomial change of coordinates. Thus it has a unique positive solution, and this solution has multiplicity one. It follows then from Theorem \ref{Gale bijection} that this solution is a double point of the hypersurface $\sum_{a \in \A} c_a {t}^{h_{a}}x^{a}=0$ when $ t=t_{\A}$. To resume, the hypersurface 
$\sum_{a \in \A} c_at^{h_{a}}x^{a}=0$ has no positive singular point if  the sign compatibility ${\mathscr{C}}_D^{\nu} \neq \emptyset$ is not satisfied or if $t \neq t_{\A}$. If the sign compatibility is satisfied, then the hypersurface given by $t=t_{\A}$ has only one singular point which is a double point.
\end{ex}

We now show that this behaviour continues to be true for supports of larger codimension when the coefficients are generic enough.

\begin{prop}
\label{simple}
Assume $\rk \mAh=\rk \mA+1=n+2$ and let $k=\codim \A$.  Then, all positive solutions of the critical system~\eqref{S} 
are simple solutions for generic enough coefficients $c_{a}$. Moreover, two distinct solutions have distinct coordinates $t$.
\end{prop}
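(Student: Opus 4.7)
The plan is to combine a parametric transversality argument for the first claim with a Horn--Kapranov dimension count plus a generic Viro path argument for the second.

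For the simple-solutions part, I would consider the evaluation map
\[
F : \R^{\A}\times\R_{>0}^{n}\times\R_{>0}\;\longrightarrow\;\R^{n+1},\qquad F(c,x,t)=\left(f_t(x),\,x_1\tfrac{\partial f_t}{\partial x_1}(x),\ldots,x_n\tfrac{\partial f_t}{\partial x_n}(x)\right),
\]
whose zero set is the parameterized space of positive solutions of~\eqref{S}. A direct computation gives $\partial F/\partial c_a = t^{h_a}x^a A_a$, where $A_a=(1,a_1,\ldots,a_n)^{T}$ is the $a$-th column of $\mA$. Since $\rk\mA=n+1$, the vectors $A_a$ already span $\R^{n+1}$, so $DF$ is surjective everywhere on $F^{-1}(0)$. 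Hence $F^{-1}(0)$ is a smooth definable submanifold of dimension $n+k+1=\dim\R^{\A}$, and applying the o-minimal Sard theorem~\cite{Cos00} to the projection $F^{-1}(0)\to\R^{\A}$, $(c,x,t)\mapsto c$, shows that the set of critical values is a definable subset of strictly smaller dimension. Since $c$ is a regular value precisely when the $(n+1)\times(n+1)$ Jacobian in $(x,t)$ is non-singular at every zero of $F_c$, for $c$ outside a proper definable subset every positive solution of~\eqref{S} is simple.

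For the distinct-$t$ part I would estimate the dimension of the multi-singularity locus
\[
\Sigma_{2}=\{\,c\in\R^{\A}\mid f_c=0\text{ has at least two distinct positive singular points}\,\}.
\]
By Theorem~\ref{T:Horn}, a coefficient $c$ together with a positive singular point of $f_c$ is encoded as $\Phi_{\A}(x,y)=(x^{-a}\langle B_a,y\rangle)_a$ for some $(x,y)\in\R_{>0}^{n}\times(\R^{k}\setminus\mathcal{H}_{\mB})$. Writing a second singular point as $x_{2}=x_{1}e^{u}$ with $u\in\R^{n}\setminus\{0\}$, the coincidence $\Phi_{\A}(x_{1},y_{1})=\Phi_{\A}(x_{1}e^{u},y_{2})$ reduces to the $x_{1}$-independent system
\[
\langle B_a,y_1\rangle=e^{-\langle u,a\rangle}\langle B_a,y_2\rangle,\qquad a\in\A.
\]
These $n+k+1$ definable equations in the $n+2k$ variables $(u,y_1,y_2)$ are generically independent: at $u=0$ they force $y_1=y_2$ (which is excluded from the non-diagonal part), and a first-order expansion in $u$ shows the rank is maximal on an open dense subset where $u\neq 0$. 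Hence the set of non-diagonal double singularities has dimension at most $n+(k-1)=n+k-1$, and projecting to the $c$-coordinate yields $\dim\Sigma_{2}\leq n+k-1$, so $\Sigma_{2}$ has codimension at least $2$ in $\R^{\A}$.

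Finally, the Viro map $\psi:\R^{\A}\times\R_{>0}\to\R^{\A}$, $\psi(c^{0},t)=(c^{0}_{a}t^{h_a})_{a}$, is a definable submersion that is a diffeomorphism in $c^{0}$ for each fixed $t$; hence $\psi^{-1}(\Sigma_{2})$ has dimension at most $(n+k+2)-2=n+k$, and its image under the projection to the $c^{0}$-factor is a definable subset of $\R^{\A}$ of dimension at most $n+k<n+k+1$. For $c^{0}$ outside this proper definable subset, the Viro path $t\mapsto(c^{0}_{a}t^{h_a})_{a}$ misses $\Sigma_{2}$ for every $t>0$, which is precisely the statement that distinct positive solutions of~\eqref{S} have distinct $t$-coordinates. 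The main technical hurdle is the codimension estimate $\codim\Sigma_{2}\geq 2$, which reduces to a careful Jacobian computation for the coincidence equations; alternatively, this bound is essentially contained in~\cite{For19}, Theorem~3.5 (recalled in Subsection~\ref{S:Adiscr}), which identifies smooth points of $\nabla$ with polynomials having a \emph{unique} nondegenerate singularity, so that $\Sigma_{2}$ lies in the singular locus of $\accentset{\circ}{\nabla}_{\A}$.
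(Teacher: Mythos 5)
Your proposal takes a genuinely different route from the paper. For the simplicity of solutions, you run a parametric transversality / o-minimal Sard argument on the evaluation map $F(c,x,t)$; the observation that $\partial F/\partial c_a = t^{h_a}x^a A_a$ spans $\R^{n+1}$ because $\rk\mA=n+1$ is correct, and the identification of ``$c$ is a regular value of $F^{-1}(0)\to\R^{\A}$'' with ``all positive solutions are simple'' is sound. The paper instead works in the Gale-dual system: it shows a degenerate solution is a zero of $\det J$, then establishes (Claim~\ref{clm:PA}) the factorization of $\det J$ into Forsg{\aa}rd's cuspidal form $P_{\A}$ times explicit non-vanishing factors and argues the bad set has positive codimension. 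Your argument is shorter and more elementary, and also reveals transparently where the hypothesis $\rk\mA=n+1$ is used. For the distinct-$t$ part, the paper gives a direct algebraic argument: it shows that $t$ is recovered from a Gale solution $y$ via \eqref{tcritk=k}, and that two Gale solutions with the same $t$ must coincide in $\p{\mathscr{C}}_D^{\nu}$. You instead bound $\codim\Sigma_2$ and finish with the submersion argument on the Viro map $\psi$; that final step is correct.

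There is, however, a gap in your primary justification of $\codim\Sigma_2\geq 2$. You assert that the $n+k+1$ coincidence equations $\langle B_a,y_1\rangle = e^{-\langle u,a\rangle}\langle B_a,y_2\rangle$ in the $n+2k$ variables $(u,y_1,y_2)$ are ``generically independent'' away from $u=0$, but this is not established and is not obvious. The map is homogeneous of degree $1$ in $(y_1,y_2)$ (so the Euler direction lies in the kernel of the Jacobian at every zero), and for fixed generic $u$ with $n\geq k$ the $2k$ variables $(y_1,y_2)$ are over-determined by the $n+k+1$ linear equations, so nontrivial solutions occur only over a thin set of $u$'s; the rank can and does drop on the locus you care about, and the naive expected-dimension count does not automatically apply. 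Your alternative route --- $\Sigma_2$ sits in the non-smooth locus of $\accentset{\circ}{\nabla}_{\A}$ by Forsg{\aa}rd's Theorem~3.5, and the non-smooth locus of a definable set of dimension at most $n+k$ has dimension at most $n+k-1$, hence codimension at least $2$ in $\R^{\A}$ --- is the sound one. I would promote it to the main argument (and spell out the o-minimal fact that the non-smooth locus of a definable set has strictly smaller dimension), and delete or carefully substantiate the Jacobian rank claim.
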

\begin{proof}
We use Theorem \ref{Gale bijection}. 
If \(\A\) is defective, then the \(\A\)-discriminant variety is of codimension at least \(2\). Consequently, since the path \((c_at^{h_a})_{t\in]0,\infty[}\) has dimension \(1\), it avoids the discriminant variety
for generic enough coefficients $c_{a}$.
Moreover, we might assume that 
${\mathscr{C}}_D^{\nu} \neq \emptyset$ and that \(\A\) is not defective for otherwise the system~\eqref{S} 
has no positive solution at all. Then the Gale system~\eqref{G} consists of $k-1$ homogeneous equations (of degree \(0\))
\begin{equation*}
\phi_{j}(y)=1,  \quad j=1,\ldots,k-1,
\end{equation*}
where
$\phi_{j}(y)= \prod_{i=0}^{n+k} {\langle  D_{i}, y \rangle}^{\lambda_{i,j}}$,
$y=(y_{1},y_{2},\ldots,y_{k}) \in {\mathscr C}_D^{\nu}$, and $\lambda=(\lambda_{i,j})$ is a matrix Gale dual to $\mAh$. 

Let us consider \(J \in M_{k}(\R)\) defined for any \(1 \leq j \leq k\) by \(J_{k,j} = y_j\)  and \(J_{i,j} = \frac{\partial \phi_i}{\partial y_j}\) when \(1\leq i \leq k-1\). By Euler's homogeneous function Theorem, since each \(\phi_i\) is homogeneous of degree \(0\),  we know that \(\sum_{j=1}^k y_j \frac{\partial \phi_i}{\partial y_j} = 0\) for all \(i\). Consequently, the column vector \((y_1,\ldots,y_k)^T\) is in the kernel of the \(k-1\) first rows of \(J\) but not in the kernel of the last row.

In particular, a solution \(y \in {\mathscr{C}}_D^{\nu}\) of~\eqref{G} is a critical point if and only if the first \(k-1\) rows of \(J(y)\) are linearly dependent, which is equivalent to \(\det(J(y))=0\).

In~\cite{For19}, the author defined the cuspidal form \(P_{\A}\) of a finite set \(\A\):
\[
P_{\A}(y_1,\ldots,y_k) = \sum_{\substack{\sigma \subset [0,n+k] \\ \lvert \sigma \rvert =n}} \det(\hat{\mA}^{\sigma})^2 \prod_{\ell \in \sigma} \langle B_\ell,y\rangle
\]
where \(\hat{\mA}\) is the matrix \(\mA\) without its top row of \(1\)'s and \(\hat{\mA}^{\sigma}\) is the sub-matrix we get from \(\hat{\mA}\) by selecting the columns in \(\sigma\). In~\cite{For19} (Theorem 6.1), the author proved that this polynomial \(P_{\A}\) is identically zero if and only if \(\A\) is defective.

The proof of the following identity being mostly computational (succession of Laplace expansions and changes of variables), it is postponed to Appendix~\ref{app:PA}.
\begin{clm}\label{clm:PA}
	Assume the coefficients \(c_i\) are all non-zero. Then, for all \(y \in {{\mathscr{C}}_D^{\nu}}\), we have
	\[
	\det(J) = \gamma \left(\prod_{a \in \A} \langle B_a,y \rangle^{-1+\sum_{p=1}^{k-1}\lambda_{a,p}}\right) \cdot P_{\A}(y) \cdot \left( \sum_{j=1}^k y_i^2\right) \cdot \left( \sum_{a \in \A} h_a \langle B_a, y\rangle \right)
	\]
	where \(\gamma\) is a non-zero real constant which does not depend on \(y\).
\end{clm}

One can notice that the last factor can also be rewritten as a product of matrices: \((h_0,\ldots,h_{n+k}) \cdot B \cdot (y_1,\ldots,y_k)^T\). Since \(\rk(\mAh)=\rk(\mA)+1\), we know that the vector \(h\) is not in the left-kernel of \(\mB\), which means that the last factor is not identically zero. Thus by assumptions, \(\det(J)\) is not identically zero.

Note also that the vanishing of \(\det(J)\) does not depend on the coefficients $c_{a}$ (more precisely, they only appear in the factorized form in the constant \(\gamma\)). Writing $D_{i}=\frac{B_{i}}{c_i}$, we see that~\eqref{G} is equivalent to
\begin{equation}\label{equiv}
\psi_j(y) \defeq \prod_{i=0}^{n+k} {\langle  B_{i}, y \rangle}^{\lambda_{i,j}}
=\prod_{i=0}^{n+k}c_{i}^{\lambda_{i,j}}, \quad j=1,\ldots,k-1.
\end{equation}
The function \(\psi = (\psi_1,\ldots,\psi_{k-1}) \) does not depend on the $c_{a}$.

We know that \(\mathscr{C}_{\mD}^\nu\) is a non-empty open cone of apex the origin in \(\R^k\). So the set \(\{y \in \mathscr{C}_{\mD}^\nu \mid \det(J(y))=0\}\) is still a cone of apex \(0\) but of dimension at most \(k-1\). Since the functions \(\psi_j\) are homogeneous of degree \(0\), the image \(\psi(\{y \in \mathscr{C}_{\mD}^\nu \mid \det(J(y))=0\})\) is of dimension at most \(k-2\) in \(\R^{k-1}\).  Finally, since \((c_i)_i \mapsto (\prod_{i=0}^{n+k} c_i^{\lambda_{i,j} })_j \) is a submersion from \((\R^{*})^{n+k+1}\) to \(\R^{k-1}\) (because \(\rk \lambda =k-1\)), the set of polynomials \((c_a)\) which admit a point \(y \in \mathscr{C}_{\mD}^\nu\) verifying~\eqref{equiv} and vanishing \(\det(J)\) has codimension at least one. 
It follows then from Theorem~\ref{Gale bijection} that all positive solutions of the critical system~\eqref{S} are simple for generic enough coefficients $c_{a}$. 

Now let $y$ be a solution of \eqref{G} contained in ${\mathscr{C}}_D^{\nu}$. By Theorem \ref{Gale bijection} there exists a unique $(x,t) \in \R_{>0}^{n+1}$ such that
$$
t^{h_{i}}x^{a_{i}}=\langle  D_{i}, y \rangle, \; i=0,\ldots, n+k.
$$
Choose $u \in \ker \mA \setminus \ker \mAh$. Then, ${\sum_{i} h_{i} u_{i}} \neq 0$ and ${\sum_{i} a_{i} u_{i}} =0$ and thus $t$ is determined by $y$ via the equality

\begin{equation}\label{tcritk=k}
t^{\sum_{i} h_{i} u_{i}}=\prod_{i}\langle  D_{i}, y \rangle^{u_{i}}.
\end{equation}

Assume now that $y$ and $y'$ are two solutions of~\eqref{G} (or equivalently~\eqref{equiv}) contained in ${\mathscr{C}}_D^{\nu}$ such that the corresponding values $t$ and $t'$ are equal. Then, $y$ and $y'$ have the same image by the map sending $y \in {\mathbb{P}\mathscr{C}}_D^{\nu}$ to $(\prod_{i}\langle  B_{i}, y \rangle^{\mB_{i,j}})_{j=1,\ldots,k}$. But clearly the previous map is injective since $\mB$ has maximal rank and we conclude that \(y\simeq y^{\prime}\).

Consequently, if \((x,t)\) and \((x^{\prime},t)\) are two solutions of~\eqref{S}, then there are corresponding solutions \(y\) and \(y^{\prime}\) of the Gale dual system~\eqref{G}. But, it implies that \(y\simeq y^{\prime}\), and so \(x=x^{\prime}\).
\end{proof}

\subsection{Bounds in the
general case}\label{SS:Critical systems}
Consider the critical system~\eqref{S}. We present estimates on its number of positive solutions according to the dimension and the codimension of $\A$. We also present a necessary condition on the signs of coefficients $c_a$ for this number of positive solutions to be non-zero in any codimension and dimension. 

In the following, we will need to bound the number of positive values \(t\) such that the system~(\ref{S}) has a positive solution. This amounts therefore to bounding the number of positive solutions of a polynomial system with given support. This topic has been widely studied since Khovanski\v{\i}'s work~\cite{Khovanskii} on fewnomials. The approach to get the current best bound also goes via Gale duality. In~\cite{BS07}, the authors showed the following upper bound on the associated Gale system (see for example~\cite{Sottile}):
\begin{theo}\label{thm:BSGale}
	Let \(p_1(y),\ldots,p_{m+l}(y)\) be degree \(1\) polynomials on \(\R^l\) that, together with the constant \(1\), span the space of degree \(1\) polynomials. For any linearly independent vectors \(\{ \beta_1,\ldots,\beta_l\} \subset \R^{m+l}\), the number of solutions to 
	\[
		p(y)^{\beta_j} =1 \quad \text{ for } j=1,\ldots,l,
	\]
	in the positive chamber \(\{ y \in \R^l \mid \forall i\leq m+l,\ p_i(y) >0\}\) is less than
	\[	
		\frac{e^2+3}{4}2^{\binom{l}{2}}m^l.
	\]
\end{theo}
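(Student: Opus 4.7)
The plan is to prove this by induction on the number $l$ of equations, following the Khovanskii--Rolle strategy of Bihan and Sottile \cite{BS07}. First I would take logarithms and replace the system in the positive chamber $\mathcal{P} = \{y \in \R^l : p_i(y) > 0 \text{ for all } i\}$ by the equivalent real-analytic system
$$F_j(y) := \sum_{i=1}^{m+l} \beta_{j,i} \log p_i(y) = 0, \quad j = 1, \ldots, l.$$
The condition that the $p_i$ span the degree-$1$ polynomials together with the constant ensures that no non-trivial linear combination of the $\log p_i$'s is itself the logarithm of a monomial expression, which prevents degeneracies in the coming reduction.

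For the base case $l = 1$, the function $F_1$ is real-analytic on the interval-like chamber and its derivative $F_1' = \sum_i \beta_{1,i}\, p_i'/p_i$ is a rational function whose numerator has degree at most $m$; Rolle's theorem then gives at most $m+1$ solutions, which is within the announced bound $\tfrac{e^2+3}{4}\,m$.

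For the inductive step, I would apply the following Khovanskii--Rolle principle: if $C$ is the real-analytic curve cut out in $\mathcal{P}$ by $F_1 = \cdots = F_{l-1} = 0$, then the number of zeros of $F_l$ on $C$ is bounded by the number of critical points of $F_l|_C$ plus the number of unbounded components of $C$ in $\mathcal{P}$. The critical points are the common zeros of $F_1, \ldots, F_{l-1}$ and the Jacobian $J = \det(\partial F_j/\partial y_k)_{j,k=1}^l$. After clearing the $\prod p_i$ denominators, $J$ becomes a polynomial which factors through a system of the same ``Gale'' shape $\tilde p(y)^{\tilde\beta_j} = 1$, $j = 1, \ldots, l-1$, but now in $l-1$ variables and with roughly $2m$ linear forms (the doubling reflects the partial derivatives producing new $1/p_i$ factors). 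Induction then contributes $\tfrac{e^2+3}{4}\cdot 2^{\binom{l-1}{2}}(2m)^{l-1} = \tfrac{e^2+3}{4}\cdot 2^{\binom{l}{2}}\, m^{l-1}$, which already exhibits the correct exponent pattern.

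The boundary contribution from unbounded components of $C$ is handled by descending onto the facets of $\mathcal{P}$: each such facet is again a positive chamber where one of the $p_i$'s has been replaced by a linear form of one less variable, yielding the same structure with parameters $(l-1, m)$ and producing a recursion on the bound itself. Summing the resulting series over all levels of the induction is what generates the constant $\tfrac{e^2+3}{4}$, via an identification with a partial sum of $\sum 2^{\binom{k}{2}}/k!$ type. The principal difficulty is twofold: first, setting up the Khovanskii--Rolle reduction so that the Jacobian system is genuinely of the same form as the input (requiring linear independence of $\beta_1,\ldots,\beta_l$ and the spanning condition on $p_i$); and second, the bookkeeping of boundary contributions so that the constants telescope exactly to $\tfrac{e^2+3}{4}$ rather than a slightly worse value. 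Once these two ingredients are in place, adding the Jacobian count and the boundary count and invoking the induction hypothesis yields the claimed inequality.
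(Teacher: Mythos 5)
The paper does not prove this theorem; it is quoted verbatim from Bihan and Sottile \cite{BS07} (with a pointer to \cite{Sottile} for an exposition) and used as a black box in Corollary~\ref{boundSF}. So there is no in-paper proof to compare against, and I assess your sketch against the argument in \cite{BS07}. You correctly identify the Khovanskii--Rolle framework: replace the Gale system by the logarithms $F_j(y)=\sum_i \beta_{j,i}\log p_i(y)$, bound zeros on a curve by critical points plus unbounded branches, and handle the latter by descending to faces of the chamber.

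The inductive step, however, has a structural gap. Writing $L$ for the $(m+l)\times l$ matrix of $y_k$-coefficients of the $p_i$'s, the Cauchy--Binet formula gives
\[
\prod_{i=1}^{m+l} p_i(y)\cdot \det\left(\frac{\partial F_j}{\partial y_k}\right)_{j,k=1}^{l} \;=\; \sum_{\substack{S\subset[m+l]\\ |S|=l}} \det(\beta_S)\,\det(L_S)\,\prod_{i\notin S} p_i(y),
\]
which is a \emph{single polynomial of degree at most $m$} in $y$. It is not a Gale system $\tilde p(y)^{\tilde\beta_j}=1$, $j\leq l-1$, with roughly $2m$ linear forms, as you assert. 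Consequently, the system that Khovanskii--Rolle hands you, $\{F_1=\cdots=F_{l-1}=J=0\}$, is a mixed transcendental/polynomial system in the same $l$ variables, to which your inductive hypothesis for Gale systems does not apply. The actual \cite{BS07} argument iterates Khovanskii--Rolle to replace the transcendental $F_j$ one at a time by polynomial Jacobians until an all-polynomial system remains in the chamber; the factor $2^{\binom{l}{2}}m^l$ is the Bezout-type product of the degrees of those Jacobians, and the constant $\tfrac{e^2+3}{4}$ collects the accumulated unbounded-branch and face contributions. As written, your induction presupposes a reduction to a smaller Gale system that the Khovanskii--Rolle step does not produce, so it would not close.
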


In our case, it will be sufficient to bound the number of positive solutions of the system~(\ref{G}) which is an equivalent system with \(m = n+1\) and \(l = k-1\).

\begin{cor}\label{boundSF}
	Assume $\rk \mAh=\rk \mA+1=n+2$ and let $k=\codim \A$. The number of positive solutions of a system~\eqref{S} is bounded by 
	\[	
	\frac{e^2+3}{4}2^{\binom{k-1}{2}}(n+1)^{k-1}.
	\]
\end{cor}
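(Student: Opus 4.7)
My plan is to combine Theorem~\ref{Gale bijection} with the fewnomial bound of Theorem~\ref{thm:BSGale}. By Theorem~\ref{Gale bijection}, the positive solutions of the critical system~\eqref{S} are in one-to-one correspondence with the solutions of the Gale system~\eqref{G} lying in the projectivized positive cone $\p{\mathscr{C}}_{\mD}^{\nu}$, so the quantity we want to bound is the number of solutions of~\eqref{G} in $\p{\mathscr{C}}_{\mD}^{\nu}$. The strategy is then to present~\eqref{G} in a form to which Theorem~\ref{thm:BSGale} applies with $l=k-1$ and $m=n+1$.

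Since the columns of $\lambda$ have coordinate sum zero (because the first row of $\mAh$ is the all-ones row and $\mAh\cdot\lambda=0$), the $k-1$ equations of~\eqref{G} are homogeneous of degree zero in $y\in\R^k$. Every linear form $\langle D_i,y\rangle$ is strictly positive on ${\mathscr{C}}_{\mD}^{\nu}$, so after reordering the elements of $\A$ if needed, I would dehomogenize by fixing $\langle D_0,y\rangle=1$. This identifies $\p{\mathscr{C}}_{\mD}^{\nu}$ with the open convex subset of the affine hyperplane $H=\{y\in\R^k\mid\langle D_0,y\rangle=1\}\simeq\R^{k-1}$ cut out by the inequalities $\langle D_i,y\rangle>0$ for $i=1,\dots,n+k$, and rewrites~\eqref{G} as
\[
\prod_{i=1}^{n+k}\langle D_i,y\rangle^{\lambda_{i,j}}=1,\qquad j=1,\dots,k-1.
\]
This is exactly of the form treated in Theorem~\ref{thm:BSGale} with $l=k-1$ and $m+l=n+k$, so $m=n+1$.

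It then remains to check the two hypotheses of Theorem~\ref{thm:BSGale}. For the spanning condition, since $\mB$ has rank $k$ the forms $\{\langle D_i,\cdot\rangle\}_{i=0}^{n+k}$ span the $k$-dimensional dual of $\R^k$; their restrictions to $H$, together with the constant function $1$ coming from $\langle D_0,\cdot\rangle|_H$, then span the $k$-dimensional space of degree-$\leq 1$ functions on $H$, because a linear form vanishing on the affine hyperplane $H$ must be zero. For linear independence of the exponent vectors $\beta_j=(\lambda_{i,j})_{i=1}^{n+k}$, I would choose the distinguished index $0$ to correspond to an index whose standard basis vector $e_0$ does not lie in the $(k-1)$-dimensional column span of $\lambda$ in $\R^{n+k+1}$; such an index always exists because $n+k+1>k-1$, and then deleting the $0$th row of $\lambda$ preserves column rank. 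Plugging $l=k-1$ and $m=n+1$ into Theorem~\ref{thm:BSGale} yields the bound $\frac{e^{2}+3}{4}\,2^{\binom{k-1}{2}}(n+1)^{k-1}$.

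The only mildly delicate point is the verification of the two hypotheses in the previous paragraph, but both reduce immediately to the rank conditions $\rk\mB=k$ and $\rk\lambda=k-1$ that come from the standing assumption $\rk\mAh=n+2$; the rest of the argument is a direct chain of implications once Theorem~\ref{Gale bijection} and Theorem~\ref{thm:BSGale} are in hand.
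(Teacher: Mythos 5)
Your proof is correct and follows essentially the same route as the paper: Gale duality via Theorem~\ref{Gale bijection}, dehomogenization of the degree-zero homogeneous Gale system on an affine slice where one chosen linear form equals $1$, then Theorem~\ref{thm:BSGale} with $l=k-1$ and $m=n+1$ (the paper performs the dehomogenization via a $\mathrm{GL}_{k}$ change of coordinates, which is the same operation in different clothing). You additionally verify the spanning and linear-independence hypotheses of Theorem~\ref{thm:BSGale}, which the paper glosses over — a worthwhile check, though for the linear independence of the $\beta_j$ one can note more directly that no standard basis vector $e_i$ lies in the column span of $\lambda$, since that span is $\ker\mAh$ and every column of $\mAh$ begins with a $1$, so the choice of distinguished index is in fact unconstrained.
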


\begin{proof}
	By Theorem~\ref{Gale bijection}, this is enough to bound the number of solutions in \(\mathbb{P}\mathscr{C}_{\mD}^\nu\) of a system~\eqref{G}.
	
	The matrix \(\mD\) has rank \(k\), so there exist \(\alpha \in \A\) and \(\mG \in \mathrm{GL}_k(\R)\) such that the row \((\mD \cdot \mG)_{\alpha}\) is exactly the row vector \(e_k = (0,\ldots,0,1)\). Up to reordering the elements of \(\A\), assume that \(\alpha = a_0\). We consider the linear change of variables \(y^\prime = \mG^{-1} y\). The condition \(\langle D_a, y \rangle >0\) for all \(a\) becomes \(\langle D_a, \mG y^\prime \rangle >0\) for all \(a\). It implies in particular that \(D_{a_0}\cdot \mG\cdot  y^\prime = y^\prime_{k} >0\). So the number of solutions does not change by restricting the set to the affine chart given by \(y^\prime_{k} =1\).
	
	Consequently, \(y\) is a solution of~\eqref{G} in \(\mathbb{P}\mathscr{C}_{\mD}^\nu\) if and only if the element \((\tilde{y}_1,\ldots,\tilde{y}_{k-1}) \defeq (y_1^\prime/ y_k^\prime,\ldots,y_{k-1}^\prime/y_{k}^\prime) \in \R^{k-1}\) is a solution of 
	\begin{equation}\label{Gaffine}
		\prod_{i=1}^{n+k} p_i(\tilde{y})^{\lambda_{i,j}} =1, \quad j=1,\ldots,k-1
	\end{equation}
	where \(p_i(\tilde{y}) = \langle D_i \cdot \mG, (\tilde{y}_1,\ldots,\tilde{y}_{k-1},1)\rangle\), 
	which verifies \(p_i(\tilde{y}) >0\) for \(i=1,\ldots,n+k\) (notice that we removed one factor since \(p_0 \equiv 1\)).
	
	We can now apply the bound of Theorem~\ref{thm:BSGale} to get the desired result.
\end{proof}

We now present a necessary condition for the critical system to have positive solutions which depends on the signs of the coefficients $c_a$.
A subset \(J\) of \(\A\) is called
{\em coface} of \(\A\) if there exists a face $F$ of $Q$ such that $\A \setminus J = \AF$.

\begin{prop}\label{signs}
If there exists a non-empty coface $J$ of $\A$ such that $\{c_{a} \mid a \in J\} \subset \R_{>0}$
or $\{c_{a} \mid a \in J\} \subset \R_{<0}$ then ${\mathscr{C}}_D^{\nu}$ is empty and thus the critical system~\eqref{S} has no solution.
\end{prop}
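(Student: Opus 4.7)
The plan is to show that the hypothesis on signs of the $c_{a}$ for $a \in J$ forces an explicit positive linear dependence among the rows $D_{a}$ of $\mD$ indexed by $J$, and then to observe that such a dependence is incompatible with the existence of a covector lying strictly on one side of all the $D_{a}$'s.

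The key input is the classical Gale duality correspondence between faces of $Q=\conv(\A)$ and positive circuits in the Gale dual $\mB$: if $\A_{F}=\A \setminus J$ is a face of $Q$, then the origin of $\R^{k}$ lies in the relative interior of the convex hull of $\{B_{a}\mid a \in J\}$. In particular there exist strictly positive reals $\mu_{a}>0$ for $a\in J$ such that
\begin{equation*}
\sum_{a\in J} \mu_{a} B_{a}\;=\;0.
\end{equation*}
I will invoke this without reproof, as it is a standard fact about Gale transforms (see, e.g., \cite{GKZ}).

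Using the identity $D_{a}=\tfrac{1}{c_{a}}B_{a}$ from \eqref{D_{i}}, the relation above can be rewritten as
\begin{equation*}
\sum_{a \in J} (\mu_{a}c_{a})\,D_{a}\;=\;0.
\end{equation*}
Under the assumption that the $c_{a}$ for $a\in J$ all share a common sign $\sigma \in \{+1,-1\}$, every coefficient $\mu_{a}c_{a}$ has sign $\sigma$. After multiplying by $\sigma$ we obtain a relation $\sum_{a\in J} \nu_{a}D_{a}=0$ with $\nu_{a}>0$ for all $a\in J$.

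Now suppose, aiming for a contradiction, that $\mathscr{C}_{\mD}^{\nu}$ is non-empty, i.e.\ there exists $y \in \R^{k}$ with $\langle D_{i},y\rangle>0$ for every $i=0,\ldots,n+k$. Then in particular $\langle D_{a},y\rangle>0$ for each $a\in J$, whence $\sum_{a\in J}\nu_{a}\langle D_{a},y\rangle>0$, contradicting the relation $\sum_{a\in J}\nu_{a}D_{a}=0$. Hence $\mathscr{C}_{\mD}^{\nu}=\emptyset$, and Lemma~\ref{necessary} (or Theorem~\ref{Gale bijection}) immediately gives that the critical system~\eqref{S} has no positive solution.

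The only substantive step is the invocation of the Gale-duality correspondence between faces and positive circuits of the dual; once that is accepted the rest is a one-line sign argument. No serious obstacle is anticipated.
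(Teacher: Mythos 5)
Your proof is correct, and it is the Gale-dual phrasing of the argument the paper gives in the primal picture. The paper produces an affine functional $\ell$ vanishing on $\AF=\A\setminus J$ and positive on $J$, encodes it as a row vector $R\cdot\mA$, and observes that a point $y\in{\mathscr{C}}_D^{\nu}$ gives a strictly positive kernel vector $v=\mD y$ of $\mC$, so that $0=R\mC v=\sum_{a\in J} c_a\,\ell(a)\,v_a$ — impossible when all $c_a$, $a\in J$, share a sign. You instead start from the classical characterization of cofaces as positive circuits of the Gale dual (recorded in the paper as Remark~\ref{rem:cofaceGale}, with a reference to \cite{GS}), extract a positive dependence $\sum_{a\in J}\mu_a B_a=0$, rewrite it via $B_a=c_aD_a$ as a positive dependence among the rows $D_a$, and contradict the half-space condition directly. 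These are literally dual formulations of the same linear-algebra fact: the vector $(\mu_a)_{a\in J}$ you use and the vector $(\ell(a))_{a\in J}$ in the paper both represent the same positive element of the left kernel of $\mB$ supported on $J$. The paper's route is slightly more self-contained — it does not need to import the coface/positive-circuit correspondence, and indeed it \emph{derives} that correspondence as Remark~\ref{rem:cofaceGale} from this very proof — whereas your route is arguably a touch more transparent to a reader fluent in Gale diagrams. Both are sound.
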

\begin{proof}
Let $\ell: \R^n \rightarrow \R$ be some affine function. Then there is a row vector $R$ such that
$R \cdot A$ is the row vector with entries $\ell(a)$ for $a \in \A$.
Assume that ${\mathscr{C}}_D^{\nu}$ is non-empty. Then there exists a vector column \(v\)
with positive coordinates such that $C \cdot v=0$. But then  \(R \cdot C \cdot v=0\) so \(R \cdot C\) has to be the zero vector or to contain a positive and a negative entry. Recall that $C$ is the matrix obtained by multiplying the $i$-th column of $\mA$ by $c_i$ for $i=0,\ldots,n+k$. It follows that
the \(a\)-entry of \(R\cdot \mC\) equals $c_a \cdot \ell(a)$, and we get the desired result noting that if $J$ is a coface then there exists an affine function
$\ell: \R^n \rightarrow \R$ such that $\ell$ vanishes on the face $\AF=\A \setminus J$ and is positive on $J$.
\end{proof}

\begin{rem}\label{rem:cofaceGale}
    A direct consequence of the previous proof is the following well-known result (see for example Theorem~1 p.88 in~\cite{GS}): \(J\) is a coface of \(\A\) if and only if the origin belongs to the cone \(\sum_{a \in J} \R_{>0} B_a\) where \((B_a)_{a \in \A}\) are the rows of a Gale dual matrix \(\mB\) of \(\mA\). 

   Indeed, the fact that the origin belongs to the cone \(\sum_{a\in J} \R_{>0} B_a\) is equivalent to the existence of a row vector \(\mu \in \R_{\ge 0}^n\) in the left-kernel of \(\mB\) (and so in the row-span of \(\mA\)) such that \(\mu_a >0\) if and only if \(a \in J\). It means that there exists an affine function \(\ell\) such that \(\ell(a) > 0\) when \( a \in J\) and \(\ell(a)=0\) otherwise.
\end{rem}

\subsection{Bounds in small codimension}
\label{S:smallcodim}
We continue to assume that $\rk \mAh =\rk \mA+1$ which implies $\codim \Ah=\codim \A -1$.
Consequently, when $\codim \A =1$ (resp., $2$) the support $\mAh$ of the corresponding Viro critical system has codimension zero (resp., $1$). We focus on these two cases in this section. 

Start with the case where the codimension of \(\A\) equals $1$.
We say then that $c=(c_{a}) \in \R^{\A}$ (or the corresponding polynomial $f$) is \emph{sign compatible} with $\A$ if either $c_a \cdot \lambda_{a} >0$ for all $a \in \A$, or $c_a \cdot \lambda_{a} <0$ for all  $a \in \A$, where the $\lambda_{a}$'s are the coefficients in a given non zero
affine relation $\sum_{a \in \A} \lambda_{a} \cdot a=0$ on  $\A$. Note that this definition does not depend on the choice of such an affine relation.

The following result is well-known (see~\cite{GKZ},~\cite{BHDR}, and~\cite{RR}).

\begin{prop}\label{facecodim1}
Assume that $\A$ has codimension \(1\) and $\rk \mAh =\rk \mA+1$. If $c=(c_{a}) \in \R^{\A}$ is not sign compatible with $\A$ then~\eqref{S} has no positive solution. If $c$ is sign compatible with $\A$ then~\eqref{S} has exactly one positive solution, which is a simple solution.
\end{prop}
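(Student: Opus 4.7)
The plan is to invoke Theorem~\ref{Gale bijection} in the degenerate case where the Gale dual system is vacuous, so that the argument collapses onto the computations already present in Example~\ref{codimension 1}. First I would observe that the hypothesis $\rk \mAh = \rk \mA + 1$ combined with $\codim \A = 1$ (so $\rk \mA = n+1$ and $|\A| = n+2$) forces $\rk \mAh = n+2 = |\A|$. Thus $\ker \mAh = \{0\}$ and any Gale dual matrix of $\mAh$ has $k-1=0$ columns, so the associated Gale dual system~\eqref{G} imposes no equations at all. By Theorem~\ref{Gale bijection}, the positive solutions of~\eqref{S} are therefore in bijection with the entire projectivization $\mathbb{P}{\mathscr{C}}_D^\nu$.

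Next I would exploit the fact that ${\mathscr{C}}_D^\nu$ lives in $\R^k = \R$: it is either empty or an open half-line, so $\mathbb{P}{\mathscr{C}}_D^\nu$ contains at most one point. A Gale dual of $\mA$ is, up to scalar, the column vector $\mB = (\lambda_a)_{a \in \A}$ whose entries form the affine relation spanning $\ker \mA$, and the identity $D_a = \lambda_a/c_a$ relating $\mB$ and $\mD$ gives ${\mathscr{C}}_D^\nu = \{y \in \R : (\lambda_a/c_a)\,y > 0 \text{ for all } a \in \A\}$. This cone is non-empty iff every $\lambda_a$ is non-zero and all ratios $\lambda_a/c_a$ share a common strict sign, which is precisely the sign-compatibility condition. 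This settles the counting: zero positive solutions when $c$ is not sign compatible with $\A$, and exactly one when it is.

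To conclude that the unique solution in the sign-compatible case is simple, I would appeal directly to the explicit description given in Example~\ref{codimension 1}. Its $t$-coordinate equals the positive real $t_\A$ of~\eqref{tcritical} (the denominator-exponent $\sum_a h_a \lambda_a$ is non-zero precisely because $\lambda \notin \ker \mAh$), and at $t = t_\A$ eliminating $y$ through one chosen equation reduces the remaining equalities to a system of the form $x^{a_i - a_0} = d_i$ with $d_i > 0$. This system is supported on a codimension-zero set, hence linear after a monomial change of coordinates, and so admits a single simple positive solution. The main conceptual hurdle, and the only one I anticipate, is correctly interpreting the degenerate Gale system with $k-1 = 0$ equations as vacuous so that Theorem~\ref{Gale bijection} applies to give a bijection with the whole $\mathbb{P}{\mathscr{C}}_D^\nu$; beyond that no new input is needed.
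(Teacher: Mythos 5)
Your proof is correct and takes essentially the same approach as the paper: both reduce to the observation that $\codim\Ah=0$ and rest on the cone condition together with Example~\ref{codimension 1}. The only cosmetic difference is that you package the counting through Theorem~\ref{Gale bijection} in its degenerate $k-1=0$ form (identifying positive solutions with $\mathbb{P}\mathscr{C}_{\mD}^{\nu}$, which is a single point or empty according to sign compatibility), whereas the paper passes more directly through Lemma~\ref{necessary} and the fact that~\eqref{S} becomes a linear system after a monomial change of coordinates.
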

\begin{proof}
First, by Lemma~\ref{necessary}, we have that $(c_{a})$ is sign compatible with $\A$ if and only if the cone~\eqref{eq:Cnu}
is non-empty, which is a necessary condition for~\eqref{S} to have a positive solution. 

Since $\rk \mAh=\rk \mA+1$, we get that 
$\codim \Ah=0$ and thus, up to monomial change of coordinates, the system~\eqref{S} is equivalent to a linear system (with constant term). It is then easy to see that this linear system has one, and only one, positive solution (which is simple) precisely when $(c_{a})$ is sign compatible with $\A$, see Example \ref{codimension 1} for more details.
\end{proof}

\begin{rem}
If $\codim \A=1$ but $\A$ is not a circuit, then $\A$ is a pyramid and thus no polynomial \(f\) is sign compatible. Then~\eqref{S} has no positive solution (this fact could also be deduced from Corollary~\ref{cor:pyramidal}).
\end{rem}

Notice that in the case of codimension \(1\), the criterion given by Proposition~\ref{signs} becomes a characterization.

\begin{lem}\label{signs1}
	If $\codim \A=1$ then the necessary condition given in Proposition~\ref{signs} is equivalent to the sign compatibility of $f$ with $\A$ which is in turn equivalent to ${\mathscr{C}}_D^{\nu} \neq \emptyset$ .
\end{lem}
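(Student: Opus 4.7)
The plan is to prove the two equivalences separately. First I would establish that sign compatibility of $f$ with $\A$ is equivalent to $\mathscr{C}_D^{\nu} \neq \emptyset$: in codimension $1$, the Gale dual matrix $\mB$ of $\mA$ has a single column $\lambda$, and the rows of $\mD$ are the scalars $D_a = \lambda_a/c_a$. Then $\mathscr{C}_D^{\nu}\subset \R$ is nonempty precisely when all $D_a$ share a strict sign, equivalently when all products $c_a \lambda_a$ do, which is exactly the definition of sign compatibility.

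Proposition~\ref{signs} supplies one direction of the remaining equivalence: the existence of a nonempty coface $J$ with $\{c_a : a\in J\}$ one-signed forces $\mathscr{C}_D^{\nu} = \emptyset$. For the converse, I would assume $f$ fails to be sign compatible and construct a one-signed coface. Here Remark~\ref{rem:cofaceGale} is the key tool: in codimension $1$ it says that $J \subset \A$ is a coface iff there exist strictly positive weights $\tilde\mu_a > 0$ on $J$ with $\sum_{a \in J} \tilde\mu_a \lambda_a = 0$. The pyramid case is immediate, since the apex $a_0$ satisfies $\lambda_{a_0}=0$, so $J = \{a_0\}$ is a coface. In the circuit case, let $\A^{\pm} = \{a : \pm \lambda_a > 0\}$ and $\A_{\pm} = \{a : \pm c_a > 0\}$. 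If all $c_a$ share one sign, then $J = \A$ is a coface via the defining affine relation $\sum_a \lambda_a = 0$. Otherwise the failure of sign compatibility forces the unordered partitions $\{\A^+,\A^-\}$ and $\{\A_+,\A_-\}$ to disagree, and a short case distinction shows that either $\A_+$ or $\A_-$ meets both $\A^+$ and $\A^-$, yielding the required coface.

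The only real obstacle is this last circuit case analysis, which I expect to settle with a four-fold split on the intersections $\A^{\pm}\cap\A_{\pm}$: since the two partitions agree (possibly after swap) iff $f$ is sign compatible, at least one $\lambda$-sign class must interleave both $c$-signs, and taking $J$ to be the appropriate $c$-sign class produces the coface. I briefly considered invoking Gordan's theorem directly on $\mD$, which would produce a nonzero $\mu \geq 0$ with $\sum_a \mu_a D_a = 0$; but the support of this $\mu$ need not be $c$-one-signed, so the same sign analysis would still be required. The direct route via the two partitions is therefore cleaner.
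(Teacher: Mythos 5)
Your proposal is correct and follows essentially the same route as the paper: both reduce the problem to the characterization of cofaces via Remark~\ref{rem:cofaceGale} (in codimension one, a coface is precisely a subset whose $\lambda$-values contain both signs, or a singleton with $\lambda=0$ in the pyramid case), combined with the elementary identification of $\mathscr{C}_D^\nu\neq\emptyset$ with sign compatibility via $D_a=\lambda_a/c_a$. The paper compresses the final step into ``considering all cofaces with two elements gives the result,'' whereas you make the sign bookkeeping explicit with the $\A^\pm$, $\A_\pm$ partitions and handle the pyramid case separately — but the underlying argument is the same.
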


\begin{proof}
	Let \(\sum_{a \in \A} \lambda_a \cdot a = 0\) be any non-zero affine relation on the elements of \(\A\). Then the column matrix \(\mB=(\lambda_a)_a\) is a Gale dual matrix of \(\mA\). Then \(J \subset \A\) is a coface if and only if the origin belongs to the cone \(\sum_{a \in J} \R_{>0}\lambda_a\) (see Remark~\ref{rem:cofaceGale}), which means that \(\{\lambda_a \mid a\in J\}\) contains at least one strictly negative real number and at least one strictly positive one. Considering all cofaces with two elements gives then the result.
\end{proof}

We now turn to the codimension $2$ case. So assume $\codim \A=2$. 
We already know (Corollary~\ref{cor:pyramidal}) that if \(\A\) is pyramidal, then the critical system has no solution. So let us assume that \(\A\) is a non-pyramidal subset of $\R^n$.

We begin with a basic lemma.

\begin{lem}\label{colinear}
Let $\S$ be any non empty subset of \(\A\). 

\(\S\) is a flat of \(\MatrAd\) of rank \(1\) (i.e.\ all $B_{i}$ for $i \in \S$ are colinear and there does not exist $j \in \A \setminus \S$ such that $B_{j}$ is colinear to some (and thus any) $B_{i}$ with $i \in \S$) if and only if \(\A \setminus \S\) is a circuit.
\end{lem}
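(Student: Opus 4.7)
My plan is to prove this directly via Gale duality, bypassing a generic invocation of circuit-cocircuit duality in matroid theory. The starting observation is that since $\codim \A = 2$, the Gale dual matrix $\mB$ has exactly two columns and $\ker \mA$ is its column span. So every $v \in \ker \mA$ can be written uniquely as $v = \mB u$ with $u \in \R^2$, and $v_a = \langle B_a, u \rangle$. Consequently $\mathrm{supp}(v) = \{a \in \A : B_a \notin u^\perp\}$, and its complement in $\A$ is the set of indices $a$ for which $B_a$ lies on the line $u^\perp \subset \R^2$.

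I would then use the characterization that $T \subseteq \A$ is a circuit of $\MatrA$ if and only if $T$ is the support of a non-zero element of $\ker \mA$ and this support is minimal non-empty. For the forward direction, given a rank-$1$ flat $\S$, I would let $L \subset \R^2$ be the line spanned by the $B_a$ for $a \in \S$ (they are non-zero since $\A$ is non-pyramidal), pick $u$ with $u^\perp = L$, and verify $\mathrm{supp}(\mB u) = \A \setminus \S$. Minimality would follow from the fact that any other $\mB u' \in \ker \mA$ supported in $\A \setminus \S$ must satisfy $u' \perp L$, hence $u' \in \R u$, so $\mB u'$ is proportional to $\mB u$.

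For the converse, writing the circuit $T = \A \setminus \S$ as $T = \mathrm{supp}(\mB u)$ for some $u \neq 0$, I would observe that $\S = \{a \in \A : B_a \in u^\perp\}$: the rows $B_a$ for $a \in \S$ all lie on the line $u^\perp$, hence are colinear; and for any $b \in \A \setminus \S$, $B_b \notin u^\perp$, so $B_b$ is not colinear with the rows indexed by $\S$. This is exactly the definition of $\S$ being a rank-$1$ flat. A last sanity check: $\S$ is non-empty because a circuit in a matroid of rank $d+1$ has size at most $d+2 < d+3 = |\A|$.

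I do not anticipate a real obstacle; the only point requiring a bit of care is appealing to non-pyramidality of $\A$ to guarantee that the $B_a$ for $a \in \S$ actually span the full line $u^\perp$ and not just the origin, so that the constraint of annihilating $\S$ pins down $u'$ up to a scalar.
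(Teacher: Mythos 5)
Your proof is correct, and it takes a genuinely different route from the paper's. The paper proves the lemma in one line by invoking the general matroid-theoretic fact (Oxley, Proposition~2.1.6) that $T$ is a circuit of a matroid if and only if its complement is a hyperplane of the dual matroid, and then observing that hyperplanes of $\MatrAd$ are precisely rank-$1$ flats because $\MatrAd$ has rank $2$. You instead unwind this in the concrete Gale-dual picture: since $\codim\A=2$, kernel vectors of $\mA$ are exactly $\mB u$ for $u\in\R^2$, supports correspond to $\{a : B_a\notin u^\perp\}$, and non-pyramidality guarantees every $B_a$ is non-zero, so the rows indexed by a rank-$1$ flat span a genuine line $u^\perp$ and pin $u$ down up to scale; minimality of the support and closedness of $\S$ then follow by direct linear algebra. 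The trade-off is the usual one: the paper's argument is shorter and immediately generalizes to arbitrary codimension, while yours is self-contained, requires no matroid background (which the paper explicitly tries to make optional for the reader), and makes visible exactly where non-pyramidality and the bound $|T|\le d+2<|\A|$ enter. Your sanity check that $\S\ne\emptyset$ is the right one, and your minimality argument via $u'\perp L\Rightarrow u'\in\R u$ correctly rules out any strictly smaller support.
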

\begin{proof}
This is well-known (see for example~\cite{Oxley} Proposition 2.1.6) that \(\A\setminus \S\) is a circuit in \(\MatrA\) if and only if \(\S\) is a hyperplane of \(\MatrAd\) which is exactly a flat of rank \(1\) since \(\MatrAd\) has rank \(2\). 
\end{proof}

Consider the binary relation on the set \(\A\)
defined by "${i} \sim {j}$ if and only if the rows $B_{i}$ and $B_{j}$ of $B$ are colinear" (which is equivalent to \(i\) and \(j\) are parallel in \(\MatrAd\)).
This is an equivalence relation since $B_{i} \neq 0$ for all $i \in \A$. Note that this equivalence relation does not depend on the choice of the Gale dual matrix $B$ of $A$ (since it can be only defined from \(\MatrAd\)).

Denote by $\A/{\sim}$ the quotient space.

\begin{prop}\label{number of circuits}
Let \(\A\) be a non-pyramidal finite set in $\R^{n}$ of codimension \(2\).
The number $N$ of circuits ${\mathcal C} \subset \A$ such that $\dim{\mathcal C} < \dim \A$ is equal to the number of equivalence classes in $\A/{\sim}$ having at least two elements. As a consequence, we have $N+\lvert \A/{\sim}\rvert \leq \dim \A+3$.
\end{prop}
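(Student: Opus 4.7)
The plan is to translate the problem via Lemma~\ref{colinear} into counting equivalence classes of $\sim$, and then conclude by a one-line counting of the sizes of these classes.

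Since $\codim \A = 2$, the dual matroid $\MatrAd$ has rank $2$, and non-pyramidality of $\A$ means that no row $B_i$ of $\mB$ vanishes, so $\MatrAd$ is loopless. In a loopless rank-$2$ matroid, the rank-$1$ flats are exactly the maximal sets of pairwise parallel elements, i.e.\ the equivalence classes of $\sim$ (singletons included). Moreover, since $\codim \A = 2$ the set $\A$ itself is not a circuit, so every candidate flat $\S$ is a proper subset of $\A$. Applying Lemma~\ref{colinear} to each such $\S$ therefore gives a bijection
\[
\{\text{equivalence classes of } \sim\} \;\longleftrightarrow\; \{\text{circuits } \mathcal{C} \subset \A\}, \qquad \S \longmapsto \A \setminus \S.
\]

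Next I would translate the dimension condition on $\mathcal{C}$ into a size condition on $\S$. Since $|\A| = \dim \A + 3$ and $|\mathcal{C}| = \dim \mathcal{C} + 2$ for any circuit, one has $|\A \setminus \mathcal{C}| = \dim \A - \dim \mathcal{C} + 1$, so $\dim \mathcal{C} < \dim \A$ is equivalent to $|\S| \geq 2$ for $\S = \A \setminus \mathcal{C}$. Hence $N$ is exactly the number of $\sim$-classes of size at least two, which proves the first assertion.

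For the inequality, let $N_1$ denote the number of singleton classes, so that $|\A/{\sim}| = N + N_1$. Summing class sizes gives
\[
\dim \A + 3 \;=\; |\A| \;=\; \sum_{\S \in \A/\sim} |\S| \;\geq\; 2N + N_1,
\]
and therefore $N + |\A/{\sim}| = 2N + N_1 \leq \dim \A + 3$. The only point needing any care is the matroid-theoretic identification of rank-$1$ flats with parallel classes, which is immediate from the looplessness of $\MatrAd$; everything else is pure bookkeeping.
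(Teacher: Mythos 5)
Your proof is correct and follows the same path as the paper's: the core of both arguments is the bijection supplied by Lemma~\ref{colinear} between rank-$1$ flats of $\MatrAd$ (which are exactly the $\sim$-classes, since non-pyramidality makes $\MatrAd$ loopless) and circuits of $\A$ via $\S \mapsto \A\setminus\S$, followed by the observation that $\dim\mathcal{C}<\dim\A$ corresponds to $|\A\setminus\mathcal{C}|\geq 2$, and a trivial count $2N + N_1 \leq |\A| = \dim\A+3$ for the inequality. The only small difference is that you derive the size-versus-dimension equivalence purely arithmetically from $|\mathcal{C}|=\dim\mathcal{C}+2$ and $|\A|=\dim\A+3$, whereas the paper phrases the same fact via $\dim(\A\setminus\{a\})=\dim\A$ for all $a$, which it justifies by non-pyramidality; both routes are equivalent and equally short.
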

\begin{proof}
Since \(\A\) is not a pyramid we have \(\codim (\A\setminus\{a\})=1\) and \(\dim (\A\setminus\{a\})=\dim(\A)\)
for any \(a \in \A\). So if \(\mathcal{C}\subseteq \A\) has codimension \(1\), we have \(\dim(\mathcal{C}) < \dim(\A)\) if and only if \(\lvert \A \setminus \mathcal{C}\rvert \geq 2\). Then, the first part of the proposition is a consequence of Lemma~\ref{colinear}.
Moreover, we have \(N+\lvert \A /{\sim} \rvert \leq \lvert \A \rvert = \dim(\A)+3\).
\end{proof}

We now turn our attention to the critical system~\eqref{S} associated to $\A$ (we still have $\codim \A=2$).
Recall that $D_{i}=\frac{1}{c_{i}} B_{i}$ for all $i \in \A$, see \eqref{D_{i}}.
Thus vectors $D_{i}$ and $D_{j}$ are colinear if and only if $B_{i}$ and $B_{j}$ are colinear. Assume that the vectors $(D_i)_{i\in \A}$ are contained in an open half plane passing through the origin. Then, for any $i,j \in \A$, we have $i \sim j$ if and only if there is a \emph{positive} constant $c$ such that $D_i=c \cdot D_{j}$. Therefore, we can order the elements of $\A/{\sim}$ taking one representative $D_{u}$ for each equivalence class $u \in \A/{\sim}$ and declaring that for any $u,v \in \A/{\sim}$ we have $u < v$ if and only if the determinant $\det(D_{u}, D_{v})$ is (strictly) positive.

Let $u_{0} < u_{1}<\cdots<u_{s}$ be the elements of $\A/{\sim}$ ordered according to the previous ordering. Let $b$ any non zero vector in the kernel of $\mAh$.
For any equivalence class $u \in \A/{\sim}$, define $b_{u}=\sum_{i \in u}b_{i}$.

\begin{prop}\label{systeme principal}
If the vectors $D_{a}$ for $a \in \A$ are not contained in an open half plane passing through the origin, then~\eqref{S} has no positive solution. Otherwise, the system~\eqref{S} has at most
$$\signvar(b_{u_0}, b_{u_1},\ldots,b_{u_{s}}) \leq s$$ positive solutions.
\end{prop}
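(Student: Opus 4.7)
The first assertion is Lemma~\ref{necessary}: if the $D_a$ do not all lie in an open half-plane through the origin, then ${\mathscr{C}}_{\mD}^{\nu}=\emptyset$ and hence~\eqref{S} has no positive solution.

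Assume from now on that the $D_a$ lie in an open half-plane. By Theorem~\ref{Gale bijection}, the positive solutions of~\eqref{S} correspond bijectively to the solutions of the Gale system~\eqref{G} in $\p{\mathscr{C}}_{\mD}^{\nu}$. Since $k=2$ the system~\eqref{G} consists of a single equation and $\p{\mathscr{C}}_{\mD}^{\nu}$ is a one-dimensional open arc. The kernel $\ker(\mAh)$ is one-dimensional, so $b$ is proportional to any Gale dual vector $\lambda$ of $\mAh$; raising~\eqref{G} to the appropriate power rewrites it as $\prod_{a\in\A}\langle D_a,y\rangle^{b_a}=1$. Grouping factors according to the equivalence classes of $\sim$ and using that $D_a$ is a \emph{positive} multiple of $D_{u(a)}$ whenever $a\in u(a)$ (this is where the open half-plane hypothesis enters) yields
\[
\prod_{u\in\A/\sim}\langle D_u,y\rangle^{b_u}=C
\]
for some positive constant $C$. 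Now $(D_{u_0},D_{u_s})$ is a basis of $\R^2$, and writing each intermediate $D_{u_i}=\gamma_iD_{u_0}+\delta_iD_{u_s}$, Cramer's rule combined with the order relations $\det(D_{u_0},D_{u_i})>0$ and $\det(D_{u_i},D_{u_s})>0$ gives $\gamma_i,\delta_i>0$ for $0<i<s$. Setting $w=\langle D_{u_0},y\rangle$, $z=\langle D_{u_s},y\rangle$ turns the cone ${\mathscr{C}}_{\mD}^{\nu}$ into the open quadrant $\{w,z>0\}$, and the substitution $z=1$, $\tau=w$ reduces~\eqref{G} to counting positive roots of
\[
\tau^{b_{u_0}}\prod_{i=1}^{s-1}(\gamma_i\tau+\delta_i)^{b_{u_i}}=C.
\]

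To conclude, I would invoke the Descartes-type bound of Bihan--Dickenstein~\cite{BD} for generalized polynomial systems supported on a circuit: it applies here because $\sum_u b_u=0$ (since $b\in\ker\mAh$ annihilates the row of ones) and because the ordering $u_0<u_1<\cdots<u_s$ together with $\gamma_i,\delta_i>0$ endows the equation with the required circuit structure. The behavior at $\tau\to 0^+$ is controlled by $b_{u_0}$ while the dominant term at $\tau\to +\infty$ is $\tau^{-b_{u_s}}$, so the sign sequence $(b_{u_0},b_{u_1},\ldots,b_{u_s})$ is precisely the one relevant for the Descartes-type estimate; the trivial inequality $\signvar(b_{u_0},\ldots,b_{u_s})\leq s$ then follows from the sequence having $s+1$ entries. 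I expect the main obstacle to be this final step: a direct Rolle argument (multiplying the logarithmic derivative of the left-hand side by $\tau\prod_{i=1}^{s-1}(\gamma_i\tau+\delta_i)$ produces a polynomial of degree $s-1$) readily yields the weaker estimate $\leq s$, but the sharper $\signvar$-bound requires the full circuit-Descartes machinery of~\cite{BD}.
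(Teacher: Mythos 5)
Your proof is correct and is essentially an unwinding of the one-line citation the paper gives (``This is a direct consequence of \cite{BD}, Theorem 2.9.''). The authors rely on Bihan--Dickenstein's circuit Descartes bound as a black box, whereas you explicitly carry out the Gale-dual reduction: replacing $\lambda$ by the proportional vector $b\in\ker\mAh$, collapsing the factors by equivalence class (using that the $D_a$ lie in an open half-plane, so within a class they are \emph{positive} multiples of one another, which keeps the constant on the right-hand side positive), changing coordinates $(w,z)=(\langle D_{u_0},y\rangle,\langle D_{u_s},y\rangle)$ so that ${\mathscr C}_{\mD}^\nu$ becomes the open quadrant, and dehomogenizing to get a single equation $\tau^{b_{u_0}}\prod_{i=1}^{s-1}(\gamma_i\tau+\delta_i)^{b_{u_i}}=C$ in one positive variable $\tau$. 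This is exactly the normal form to which \cite{BD} applies. Your Rolle computation (the logarithmic derivative times $\tau\prod(\gamma_i\tau+\delta_i)$ is a genuine polynomial of degree at most $s-1$) cleanly establishes the cruder bound $\leq s$ without any external input, which is a nice bonus the paper does not spell out; and you correctly flag that the sharper $\signvar$ count is precisely what \cite{BD} Theorem 2.9 supplies. Note that the $\signvar$ refinement is actually used downstream in Theorem \ref{Fewnomial boundbis}, where the vanishing of $b_u$ drops the count from $s$ to $s-1$, so it is not a cosmetic improvement. The one small point worth checking in your write-up: $(D_{u_0},D_{u_s})$ is a basis of $\R^2$ because $\mD$ has rank $k=2$, which forces $s\geq1$; this deserves one sentence.
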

Here $\signvar(b_{u_0}, b_{u_1},\ldots,b_{u_{s}})$ is the number of sign changes between consecutive non zero terms in the sequence $(b_{u_0}, b_{u_1},\ldots,b_{u_{s}})$.

\begin{proof}
This is a direct consequence of~\cite{BD}, Theorem 2.9.
\end{proof}

\section{Viro hypersurfaces in the positive orthant}
\label{S:Viro}
Viro Patchworking Theorem has several versions, see \cite{Vir84}, \cite{Vir06}, \cite{Ris92}, \cite{GKZ}, \cite{Stu}, \cite{Bih} for instance, which work
for hypersurfaces or complete intersections defined by Laurent polynomials and contained in either the positive orthant, the real torus or the real part of a projective toric variety. When the height function is generic enough, only the height function and the signs of the coefficients (not their absolute values) are needed and in this case we speak about Combinatorial Viro Patchworking Theorem.

Here we use the simplest version: the case of an hypersurface in the positive orthant, but with a slight difference in that we consider hypersurfaces defined by generalized polynomials (polynomials with real exponent vectors) instead of Laurent polynomials. Clearly the proof of the classical Viro patchworking Theorem for hypersurfaces in the positive orthant also works for hypersurfaces defined by generalized polynomials (see the proofs contained in \cite{Ris92}, \cite{GKZ}, \cite{Stu}, \cite{Bih} for instance).
\subsection{Viro Patchworking Theorem}
\label{SS:Viro}

Let $\A \subset \R^n$ be a finite set.
Consider a Viro polynomial 
$$
f_t=\sum_{a\in \A}c_a t^{h_a}x^{a}$$
where $c=(c_a)_{a \in \A}$, h=$(h_a)_{a \in \A}$ are collections of real numbers and $t$ is a real positive parameter.
We assume here that $c_a \neq 0$ for all $a \in \A$.
Denote by $Q$ the convex hull of $\A$ and by $\hat{Q}$ the convex hull of \(\Ah=\{(a,h_a) \in \R^{n+1} \mid a \in \A\}\).
Assume that $\hat{Q}$ has full dimension $n+1$. A lower facet of $\hat{Q}$ is a facet with outward normal vector with negative last coordinate.
Projecting lower facets of $\hat Q$ (together with their faces) onto \(\R^n\) by the projection forgetting the last coordinate,
we get a polyhedral subdivision $\tau$ of $Q$. Each polytope $F$ of $\tau$ is the image of a face $\hat{F}$ of a lower facet
of $\hat Q$ by the previous projection. Consider the polynomial $f^F$ obtained by setting $t=1$ in the polynomial
$$
f_t^F=\sum_{(a,h_a)\in \Ah \cap \hat{F}}c_a t^{h_a}x^{a}.$$
Note that the support of $f^F$ is contained in $\A \cap F$ but the inclusion is strict if there exists $a \in \A \cap F$ such that
$(a,h_a) \notin \Ah \cap \hat{F}$.

\begin{assump}
\label{A:gen}
For each $F \in \tau$, the hypersurface $V_{>0}(f^F) \subset \R_{>0}^n$ is smooth.
\end{assump}

Consider for each $F \in \tau$ the chart $C_{F}^{\circ}(f^{F}) \subset \mbox{relint}(F)$. It is homeomorphic to 
the hypersurface defined by $f^{F}$ in  $X_{\A_{F}}^{\circ}(\R_{>0})$, see Subsection \ref{S:Adiscr}.
Let $Z \subset Q$ denote the union of all charts $C_{F}^{\circ}(f^{F})$ for $F \in \tau$.

Consider the following assumption which is satisfied for generic enough height function $h=(h_a)_{a \in \A}$ (and any coefficients $(c_a)_{a \in \A}$) and
which we are going to show is stronger than Assumption \ref{A:gen}.

\begin{assump}
\label{A:comb}
Each lower facet $\hat F$ of $\hat Q$ is a simplex which intersects
$\Ah$ at its set of vertices.
\end{assump}

Assume that Assumption \ref{A:comb} is satisfied. Then the polyhedral subdivision $\tau$ is a triangulation and for each simplex $F \in \tau$ the support of the polynomial $f^{F}$ coincides with the set of vertices of $F$. Since the $\A$-discriminant variety $\nabla_{\A}$ is empty when $\A$ is a simplex, Assumption \ref{A:gen} is then automatically satisfied. Moreover, each chart $C_{F}^{\circ}(f^{F})$ is homeomorphic to a hyperplane section of the relative interior
of $F$ which separates vertices $a$ with positive coefficients $c_a$ from those with negative coefficients $c_a$. To see this, one can multiply $f^F$ by a monomial and use a monomial change of coordinates with real exponents, which corresponds to taking the image of the set of vertices of $F$ by an affine transformation of $\R^n$, to get a Laurent polynomial of degree one, for which the result is obvious. It follows that the hypersurface $Z \subset Q$ is homeomorphic to the piecewise linear hypersurface $L \subset Q$ obtained by the following so-called combinatorial patchworking contruction or T-construction.

Let $\A_{\tau} \subset \A$ be the set of vertices of $\tau$.
To each $a \in \A_{\tau}$, we associate the sign of the coefficient $c_{a}$.
If a \(n\)-dimensional simplex \(\delta\) of \(\tau\) has vertices of different signs, consider the edges from \(\delta\) which have endpoints of opposite signs, and take the convex hull of the middle points of these edges. Let us denote by \(L\) the union of the taken hyperplane pieces. This is a piecewise linear hypersurface contained in \(Q\).

\begin{prop}
\label{P:Patchwork}
Under Assumption \ref{A:gen} there exists $t_0>0$ such that for any real number $t$ with $0<t<t_0$ there exists an homeomorphism $(\R_{>0})^n \rightarrow \mbox{Int}(Q)$ sending $Z_{>0}(f_t)$ to $\mbox{Int}(Q) \cap Z$. If furthermore Assumption \ref{A:comb} is satisfied, then for any $0<t<t_0$ there exists an homeomorphism as before sending $Z_{>0}(f_t)$ to $\mbox{Int}(Q) \cap L$.
\end{prop}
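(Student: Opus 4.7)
The plan is to adapt the classical proof of Viro's patchworking theorem for hypersurfaces in the positive orthant (see \cite{Ris92}, \cite{GKZ}, \cite{Stu}, \cite{Bih}) to the setting of real exponent vectors. That proof uses only the smoothness of the monomials $x \mapsto x^a$ on $(\R_{>0})^n$ and the fact that the generalized moment map $\mu \circ \varphi_{\A}: (\R_{>0})^n \to \mbox{Int}(Q)$ is a diffeomorphism; both properties hold for any $a \in \R^n$, so essentially no modification is needed.

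First I would set up a family of rescalings adapted to the subdivision $\tau$. For each top-dimensional cell $F$ of $\tau$, coming from a lower facet $\hat F$ of $\hat Q$, choose a generic point $u_F$ in the relative interior of the normal cone of $\hat F$ viewed from below. The substitution $x_i = t^{-u_{F,i}} y_i$ transforms $f_t$ into
\[
f_t\bigl(t^{-u_F} y\bigr) \;=\; t^{\alpha_F}\Bigl( f^F(y) \;+\; \sum_{a \in \A,\, a \notin F} c_a\, t^{\varepsilon_a(F)}\, y^a \Bigr),
\]
where $\alpha_F \in \R$ is a constant and $\varepsilon_a(F) > 0$ for every $a \notin \A \cap F$; so the bracketed expression converges in the $C^{\infty}_{\mathrm{loc}}$ topology to $f^F$ as $t \to 0^+$. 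Partition most of $(\R_{>0})^n$ into open regions $U_F(t)$, one per top-dimensional $F \in \tau$, so that $\Psi_t^F(y) := (t^{-u_{F,i}} y_i)_i$ sends a fixed compact set $K_F \subset (\R_{>0})^n$ (large enough to contain the portion of $V_{>0}(f^F)$ that lands in $\mbox{relint}(F)$ under the moment map of $\A_F$) onto $U_F(t)$. Using Assumption~\ref{A:gen}, the $C^1$-convergence above, and the implicit function theorem on $K_F$, one obtains for $t$ small an ambient isotopy between $V_{>0}(f_t) \cap U_F(t)$ and $\Psi_t^F\bigl( V_{>0}(f^F) \cap K_F \bigr)$. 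Transporting by $\mu \circ \varphi_{\A}$ shows that the image of $V_{>0}(f_t) \cap U_F(t)$ inside $\mbox{Int}(Q)$ is a small perturbation of $C_F^{\circ}(f^F)$, contained in an arbitrarily small neighbourhood of $\mbox{relint}(F)$.

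The main obstacle is the gluing. Two adjacent top-dimensional cells $F, F' \in \tau$ share a face $G$, and on the overlap between $U_F(t)$ and $U_{F'}(t)$ the two local models must be made compatible. As in the classical references, one arranges the $U_F(t)$ so that their complement is a shrinking tubular neighbourhood of the $\Log$-preimage of the codimension-one skeleton of $\tau$, and on that complement one uses the rescaling adapted to $G$: the same computation shows that $f_t$ converges, after the analogous substitution, to $f^G$, and Assumption~\ref{A:gen} provides smoothness of $V_{>0}(f^G)$. Iterating inductively down the face poset of $\tau$ and stitching the local descriptions together via a partition of unity (or by a direct cellular construction) produces the desired global homeomorphism $(\R_{>0})^n \to \mbox{Int}(Q)$ sending $V_{>0}(f_t)$ onto $\mbox{Int}(Q) \cap Z$.

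Under the stronger Assumption~\ref{A:comb}, each $F \in \tau$ is a simplex whose vertex set equals $\A \cap F$, so $f^F$ is a sum of monomials indexed by these vertices. Multiplying by a monomial and performing a real-exponent monomial change of coordinates (Section~\ref{monomial}), $f^F$ becomes an affine linear polynomial; its positive zero set is a hyperplane section, and the chart $C_F^{\circ}(f^F) \subset \mbox{relint}(F)$ is therefore homeomorphic to $L \cap \mbox{relint}(F)$ via a homeomorphism of $F$ preserving its face structure. Composing with the homeomorphism constructed in the previous paragraph yields the refined homeomorphism sending $V_{>0}(f_t)$ onto $\mbox{Int}(Q) \cap L$.
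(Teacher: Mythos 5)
The paper itself does not prove Proposition~\ref{P:Patchwork} in detail; it observes (in the prose preceding Assumption~\ref{A:gen}) that the classical proof of Viro's patchworking theorem for hypersurfaces in the positive orthant --- as in \cite{Ris92}, \cite{GKZ}, \cite{Stu}, \cite{Bih} --- only uses the smoothness of monomial maps on $(\R_{>0})^n$ and the moment-map diffeomorphism $(\R_{>0})^n \to \mbox{Int}(Q)$, both of which hold verbatim for real exponent vectors, and then refers the reader to those sources. Your opening paragraph makes precisely this observation, so your overall strategy is the same as the paper's.

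Where your write-up overreaches is in the gluing step, which is the actual technical substance of the classical proof and which you acknowledge is the ``main obstacle.'' Two concrete issues. First, the set-up with a \emph{fixed} compact $K_F \subset (\R_{>0})^n$ and $U_F(t) = \Psi_t^F(K_F)$ does not behave the way you want: for fixed $K_F$ the sets $\Psi_t^F(K_F)$ are compact, they translate multiplicatively as $t\to 0$ rather than grow, and their union over $F$ cannot exhaust $(\R_{>0})^n$; the regions one actually needs are the preimages under $\mu\circ\varphi_{\A}$ of neighbourhoods of the cells $F$ inside $Q$, or, equivalently, the chambers of a shrinking neighbourhood of the tropical skeleton in the $\Log$-coordinates, and these must grow with $t$. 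Second, and more seriously, a partition of unity cannot be used to stitch together a family of local homeomorphisms: a pointwise convex combination of two homeomorphisms is not a homeomorphism in general. The standard classical arguments instead either (a) prove convergence of the charts $\mu\circ\varphi_{\A}(V_{>0}(f_t)\cap U_F(t))$ to $C_F^\circ(f^F)$ in a way that is uniform over the face poset of $\tau$, and then invoke an isotopy-extension / Thom first-isotopy-lemma argument for the stratified family $t\mapsto C_Q(f_t)$ over $0<t<t_0$ (this is close to what the paper does in the proofs of Propositions~\ref{isotopic} and \ref{Jens}), or (b) build the homeomorphism cell by cell over $\tau$, which is your alternative ``direct cellular construction.'' Either route works, but the partition-of-unity phrasing is not a correct way to finish. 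Your treatment of the combinatorial case under Assumption~\ref{A:comb} (reducing each simplex chart to a hyperplane section by a monomial change of coordinates) is correct and matches the paper's discussion in Subsection~\ref{SS:Viro}.
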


\subsection{Bounds for the number of connected components of Viro hypersurfaces}
We present two bounds for the number of connected components of a hypersurface of the positive orthant defined by a Viro polynomial $f_t$ for $t>0$ small enough.
The first one deals with Viro polynomials with any number of variables and any number of monomials but satisfying Assumption
\ref{A:comb} (combinatorial patchworking construction). For the second bound, it is assumed that the codimension of $\A$ equals two and that the coefficients $c_a$ are generic enough so that Assumption \ref{A:gen} is satisfied.

The following result is elementary. Consider the piecewise linear hypersurface $L$ constructed above.

\begin{lem}
\label{L:basic}

The following properties of \(L\) are satisfied:

	\begin{enumerate}
		\item  \(b_0(V_{>0}(f_t)) = b_0(L)\) for $t>0$ small enough.
		\item Each connected component of \(Q \setminus L\) (we will call them \emph{chambers} in the following) contains at least one vertex of \(\A\).
		\item For any connected component \(C\) of \(L\), the set \(Q\setminus C\) has two connected components.
		\item Each connected component \(C\) of \(L\) has in its neighboring exactly two chambers (we will call them, its {\em neighboring chambers}). Furthermore, any path from one point of a connected component of \(Q\setminus C\) to a point in the other connected component intersects \(C\) and so also its two neighboring chambers.
	\end{enumerate}
\end{lem}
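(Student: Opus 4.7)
Each of the four assertions concerns the piecewise linear hypersurface \(L\subset Q\) produced by the combinatorial patchworking construction. The basic structural fact I will use throughout is that \(L\) is a PL \((n-1)\)-dimensional submanifold of \(Q\), with boundary contained in \(\partial Q\). Indeed, inside any \(n\)-simplex \(\delta\) of \(\tau\) with vertices of mixed signs, \(L\cap\delta\) is a single affine hyperplane piece, and these pieces glue compatibly along midpoints of edges of \(\tau\) because the signs carried by vertices shared between adjacent simplices agree.

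For (1), I apply Proposition~\ref{P:Patchwork}: it gives, for \(t>0\) small enough, a homeomorphism between \(V_{>0}(f_t)\) and \(\mbox{Int}(Q)\cap L\). To pass from \(\mbox{Int}(Q)\cap L\) to \(L\), I observe that \(L\cap\partial Q\) has dimension at most \(n-2\), so removing it does not disconnect any connected component of the PL \((n-1)\)-manifold \(L\); hence the inclusion \(\mbox{Int}(Q)\cap L\hookrightarrow L\) induces a bijection on \(\pi_0\). For (2), I argue simplex by simplex. Pick a chamber \(R\) of \(Q\setminus L\) and any \(n\)-simplex \(\delta\in\tau\) meeting \(R\). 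If all vertices of \(\delta\) carry the same sign, then \(L\cap\delta=\emptyset\), so the whole of \(\delta\) lies in \(R\) and \(R\) contains every vertex of \(\delta\). Otherwise \(L\cap\delta\) is a single hyperplane piece separating the positive from the negative vertices of \(\delta\), so \(\delta\setminus L\) has exactly two components and each contains a vertex of \(\delta\). In either case \(R\) contains a vertex of \(\A_\tau\subseteq\A\).

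For (3), which I expect to be the main technical obstacle, let \(C\) be a connected component of \(L\). By the structural fact above, \(C\) is a connected compact PL \((n-1)\)-manifold (possibly with boundary on \(\partial Q\)) embedded in the \(n\)-ball \(Q\). I would prove the separation by the standard doubling trick: glue two copies of \(Q\) along \(\partial Q\) to obtain a topological \(n\)-sphere \(\Sigma\); the double \(\widetilde C\) of \(C\) is then a closed connected PL \((n-1)\)-submanifold of \(\Sigma\), and Alexander duality yields \(b_0(\Sigma\setminus\widetilde C)=2\). The folding map \(\Sigma\to Q\) transports this to \(b_0(Q\setminus C)=2\). As an additional check, one may choose a collar neighborhood \(U\) of \(C\) small enough that \(U\cap L=C\) and verify that its two local sides \(U^+,U^-\) meet only chambers carrying positive, respectively negative, signs; this aligns the two local sides of \(C\) with the two global components of \(Q\setminus C\).

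Finally for (4), let \(U\) and \(U^\pm\) be as in the previous paragraph. Since \(U^+\) and \(U^-\) are each connected and disjoint from \(L\), each lies in a single chamber; call them \(R^+\) and \(R^-\). Any chamber neighboring \(C\) must meet every small collar of \(C\), hence coincide with \(R^+\) or \(R^-\), so \(C\) has exactly two neighboring chambers. For the last claim, any path \(\gamma\) in \(Q\) from a point of one component of \(Q\setminus C\) to a point of the other must intersect \(C\) by (3); the subarcs of \(\gamma\) immediately before and immediately after any such crossing lie in the collar \(U\) and hence in \(R^+\cup R^-\), so \(\gamma\) meets both neighboring chambers.
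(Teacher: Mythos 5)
Your approach is a genuine departure from the paper's: the paper constructs a piecewise-affine function $\mu\colon Q\to\R$ with $L=\mu^{-1}(0)$, $\mu(\delta_+)=1$, $\mu(\delta_-)=-1$ on each simplex, and then uses that $\{|\mu|<1\}$ is a trivial fibration over $L$, which yields (2), (3), (4) directly and gives (1) via a convexity argument near points of $L\cap\partial Q$. You instead treat $L$ as a PL $(n-1)$-manifold with boundary and argue by manifold topology and Alexander duality. That is a legitimate alternative, and your items (2) and (4) are sound. However there are two gaps that need attention.

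For (3), the doubling trick breaks when $\partial C=\emptyset$, i.e.\ when $C$ is a closed hypersurface entirely contained in $\mathring Q$ (this certainly occurs: e.g.\ an interior vertex of $\tau$ with one sign surrounded by vertices of the opposite sign). In that case the double $\widetilde C\subset\Sigma$ consists of two disjoint copies of $C$, so $\widetilde C$ is \emph{not} connected; Alexander duality then gives $b_0(\Sigma\setminus\widetilde C)=3$, not $2$. The final conclusion $b_0(Q\setminus C)=2$ is still true (two of the three components of $\Sigma\setminus\widetilde C$ are identified by the folding map), but the argument as you wrote it is invalid in this case. You should either split into the cases $\partial C=\emptyset$ (apply Alexander duality directly in $S^n\supset Q$) and $\partial C\neq\emptyset$ (doubling), or analyse the action of the folding involution on $\pi_0(\Sigma\setminus\widetilde C)$ more carefully.

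For (1), the inference "$L\cap\partial Q$ has dimension at most $n-2$, so removing it does not disconnect any component of the $(n-1)$-manifold $L$" is not valid as stated: removing a codimension-one closed subset from a connected manifold can disconnect it (remove a point from a line, or a curve from a surface). What saves you here is the specific fact that $L\cap\partial Q$ is precisely the manifold boundary $\partial L$, and the interior of a connected manifold with boundary is connected. You should invoke this rather than the dimension count; the dimension argument would only suffice if $L\cap\partial Q$ had codimension $\geq 2$ in $L$, which it does not.

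Finally, a presentational note: the paper also needs, for parts (3)–(4), the observation that each component $C$ of $L$ is homeomorphic to a component of the smooth hypersurface $V_{>0}(f_t)$; it then takes separation for granted. Your Alexander duality argument (once the case split is fixed) is more self-contained on this point, which is one thing your route buys.
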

\begin{proof}
 Let \(\delta\) be a \(p\)-dimensional simplex of the triangulation \(\tau\). Let \(\delta_+\) be the convex hull of \(\{a \in \A_{\tau}\cap\delta \mid c_a>0\}\) and \(\delta_-\) be the convex hull of \(\{a\in \A_{\tau}\cap\delta \mid c_a<0\}\). As \(\delta\) is a simplex, one can consider the barycentric coordinates of each point of $\delta$ with respect to the vertices of \(\delta\): 
\begin{align*}
	x \in \delta & \iff \exists ! \gamma \in [0,1]^{\A_{\tau}\cap \delta} \begin{cases}
		x = \sum_{a \in (\A_{\tau}\cap \delta)} \gamma_a \cdot a \\ \sum_{a\in (\A_{\tau}\cap\delta)} \gamma_a =1.
	\end{cases}
\end{align*}
We define a function $\mu: Q \rightarrow \R$ as follows. For any $x \in Q$, consider a simplex $\delta \in \tau$ containing $x$ and set \(\mu(x)=2\sum_{a\in \delta_+} \gamma_a(x)-1\) where \(\gamma\) are the barycentric coordinates of \(x\) with respect to the vertices of $\delta$ ($\mu(x)$ does not depend on the chosen simplex $\delta$).
Over each $\delta \in \tau$, the function \(\mu\) is affine and verifies $\mu(\delta_+)=\{1\}$, $\mu(\delta_{-})=\{-1\}$.

Notice that \(L = \mu^{-1}(\{0\}) \subset Q\). Any simplex of \(\tau\) is contained in a \(n\)-dimensional simplex of \(\tau\). Consequently, \(L\) is determined by the \(n\)-simplices of $\tau$: \(L = \bigcup_{\delta:  n-\text{simplex}} \mu_{| \delta}^{-1}(\{0\})\).

Proposition \ref{P:Patchwork}
implies that 
as soon as \(t\) is positive and either small enough or large enough, there exists a homeomorphism of pairs between \((\R_{>0}^{n},V_{>0}(f_t))\) and \((\mathring{Q},L\cap \mathring{Q})\), where $\mathring{Q}$ is the interior of $Q$. So \(b_0(V_{>0}(f_t)) = b_0(L \cap \mathring{Q}) \) and it will be sufficient to bound from above $b_0(L \cap \mathring{Q})$.

Furthermore, the set \(\{x \in Q \mid \lvert \mu(x) \rvert <1\}\) can be interpreted as a trivial fibration over \(L\). Indeed, if \(x\in Q\) satisfies \(\lvert \mu(x)\rvert <1\), then there exists also a unique pair \((a_-,a_+) \in \delta_-\times\delta_+\) such that \(x = \frac{(\mu(x)+1){a_+} + (1-\mu(x)){a_-}}{2}\) (notice that the pair does not depend on the choice of the simplex \(\delta\)). Consequently, the function \(L_\mu\) which sends \(x\) to \(((a_-+a_+)/2,\mu(x))\) is a homeomorphism between  \(\{x \in Q \mid \lvert\mu(x)\rvert <1\}\) and \(L\times \left]-1,1\right[\).

Assertions of the lemma follow from this interpretation.
\begin{itemize}
	\item First, by Proposition~\ref{P:Patchwork}, it is enough to prove that \(b_0(L) = b_0(L\cap \mathring{Q})\). Since,  \(L = \bigcup_{\delta:  n-\text{simplex}} \mu_{| \delta}^{-1}(\{0\})\), we have that every connected component of \(L\) contains at least one connected component of \(L \cap \mathring{Q}\). Let us show that it contains only one. Let \(x\) be in the closure of two connected components \(C\) and \(C'\) of \(L\cap \mathring{Q}\). Hence \(x\in L\). It means that \(x\) belongs to the interior of a face \(F\) of \(Q\). Let \(y\in C\) and \(y' \in C'\) be points close to \(x\). By convexity of \(Q\), the segment \([y,y']\) lies in the interior of \(Q\). If \(y\) and \(y'\) are close enough to \(x\), \(F\) is a face of every \(n\)-simplex of \(\tau\) intersected by \([y,y']\). This implies that there exists a path in \(L\cap \mathring{Q}\) from \(y\) to \(y'\), proving that \(b_0(L) = b_0(L\cap \mathring{Q})\).
	\item Second, we have that \(Q\) is partitioned into \((\mu^{-1}(\R_{<0}),\mu^{-1}(\R_{>0}),L)\). So, by construction of \(\mu\), any \(x \in Q\setminus L\) with \(\mu(x) \neq 0\) is path-connected to a vertex of \(\A\) inside \(\mu^{-1}(\mu(x)\cdot\R_{>0})\). It is the second point of the lemma.
	\item Third, each connected component \(C\) of \(L\) is homeomorphic to a smooth component of \(V_{>0}(f_t)\) by Viro pairs homeomorphism which implies the third assertion.
	\item Fourth, to each connected component \(C\) of \(L\), we can associate the following two sets \(C_- \defeq L_{\mu}^{-1}(C\times\left]-1,0\right[)\) and \(C_+ \defeq L_{\mu}^{-1}(C\times\left]0,1\right[)\) which are homeomorphic to \(C\times \left]0,1\right[\), and so, each one is connected. Thus, each connected component \(C\) of \(L\) has in its neighboring exactly two chambers -- the one containing \(C_-\) and the one containing \(C_+\). 
	This completes the proof.
\end{itemize}
\end{proof}

We are now ready to bound \(b_0(V_{>0}(f_t)) \) for $t>0$ small enough when the height function $h$ is generic enough.

\begin{theo}\label{patchwork}
	Assume that Assumption \ref{A:comb} is satisfied.
	Then, there exists $0<t_0<1$ such that for every $t\in \left]0,t_0\right]$, we have
	\(b_0(V_{>0}(f_t)) \leq k+1\). 
	If \(n \geq 2\) and \(k \geq 2\), then we have the better bound \(b_0(V_{>0}(f_t)) \leq k\).
\end{theo}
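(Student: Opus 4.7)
The plan is to use Lemma~\ref{L:basic} to reduce the question to a combinatorial count on $L$. By Lemma~\ref{L:basic}(1), $b_{0}(V_{>0}(f_{t})) = b_{0}(L)$ for $t>0$ small enough, so the task is to bound $b_{0}(L)$. First I would establish the identity $b_{0}(L) + 1 = N$, where $N$ is the number of chambers of $Q \setminus L$. Indeed, the dual graph with chambers as nodes and $L$-components as edges (joined as in Lemma~\ref{L:basic}(4)) is connected, by path-connectivity of $Q$; and it is acyclic because each $L$-component separates $Q$ by Lemma~\ref{L:basic}(3), combined with the simple connectivity of $Q$ (so that a cycle in the dual graph would produce a contractible loop in $Q$ with non-trivial intersection number against one of the $L$-components, a contradiction).

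Next, using the piecewise affine function $\mu$ constructed in the proof of Lemma~\ref{L:basic}, which takes values $\pm 1$ on vertices of $\A$ according to the signs of the coefficients, each chamber is contained either in $\mu^{-1}(\R_{>0})$ or in $\mu^{-1}(\R_{<0})$; by Lemma~\ref{L:basic}(2) it also contains at least one vertex of $\A$. Since two same-sign vertices lie in the same chamber if and only if they are joined by a path in the $1$-skeleton of $\tau$ through same-sign vertices, $N$ equals the sum $N_{+} + N_{-}$ of the numbers of connected components of the induced subgraphs of this $1$-skeleton on $V_{+}$ and $V_{-}$. The problem therefore reduces to proving $N_{+} + N_{-} \leq k + 2$, with an improvement to $k + 1$ when $n \geq 2$ and $k \geq 2$.

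I would prove $N_{+} + N_{-} \leq k + 2$ by induction on $k$. The base case $k = 0$ is immediate: $\tau$ is a single simplex, so the induced subgraphs on $V_{\pm}$ are cliques (hence each connected or empty), giving a sum at most $2$. For the inductive step I would remove a vertex $a^{*} \in \A$ chosen so that $\A' = \A \setminus \{a^{*}\}$ has codimension $k - 1$ and supports a regular triangulation $\tau'$ closely related to $\tau$: Assumption~\ref{A:comb} together with the genericity of $h$ guarantees the existence of a suitable $a^{*}$ (either a point of $\A$ interior to $Q$ when one exists, or an extreme vertex of $Q$ whose star in $\tau$ is combinatorially tame), so that $\tau$ is obtained from $\tau'$ by a single vertex insertion. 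A direct case analysis shows that inserting $a^{*}$ can increase $N_{+} + N_{-}$ by at most $1$: it strictly increases only when $a^{*}$ has sign opposite to every vertex of its host simplex in $\tau'$, producing a new isolated sign-component around $a^{*}$. The inductive hypothesis $N_{+}' + N_{-}' \leq (k-1) + 2 = k + 1$ then yields $N_{+} + N_{-} \leq k + 2$.

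The improvement to $N_{+} + N_{-} \leq k + 1$ when $n \geq 2$ and $k \geq 2$ requires a refinement exploiting the higher-dimensional structure: the extremal configurations attaining $N_{+} + N_{-} = k + 2$ essentially require an alternating sign-pattern along a $1$-dimensional chain of simplices (as occurs when $d = 1$ and the $d+k+1$ collinear points alternate in sign), and such a pattern cannot be realized by a regular triangulation of a $d$-polytope with $d \geq 2$ and $k \geq 2$ extra vertices. This is where I expect the main technical obstacle to lie, both in making precise the obstruction to extremal sign-alternation in higher dimension and in carefully tracking the vertex-insertion analysis to save one unit in the count.
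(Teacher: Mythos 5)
Your reduction to the dual graph $G$ being a tree, and hence to counting chambers, is essentially the same first step as the paper's proof, and is correct. The reformulation $N = N_{+} + N_{-}$ (chambers indexed by sign-components of the $1$-skeleton of $\tau$) is a reasonable restatement, although the paper does not introduce it and works directly with the chambers.

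However, the inductive step is where your argument has a genuine gap. You propose to remove a vertex $a^{*}\in\A$, pass to $\A'=\A\setminus\{a^{*}\}$, and compare the regular triangulation $\tau'$ of $\A'$ to $\tau$. But the relation between $\tau$ and $\tau'$ is not in general a ``single vertex insertion'': removing $(a^{*},h_{a^{*}})$ from the lifted configuration keeps every lower facet of $\hat Q$ not containing $\hat a^{*}$, but re-triangulates the whole star of $a^{*}$, which can involve many simplices and a completely different link. Your claim that the insertion of $a^{*}$ raises $N_{+}+N_{-}$ by at most $1$ is only justified heuristically for the degenerate case where $a^{*}$ lies in the interior of one simplex; in the general case a re-triangulated star can split several sign-components at once, and the ``choice of a suitable $a^{*}$ (either interior to $Q$, or an extreme vertex with tame star)'' is not established to exist. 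You also openly concede that the improvement from $k+1$ to $k$ for $n\ge 2$, $k\ge 2$ is unresolved. This is not a small remainder: in the paper that improvement takes up the bulk of the argument, including a delicate contradiction in the case $\kappa=2$ showing that the dual graph cannot be a $3$-branch star (three $n$-simplices sharing an $(n-1)$-face with disjoint interiors is impossible).

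For comparison, the paper's own induction stays entirely inside the \emph{fixed} triangulation $\tau$: it considers pure $n$-dimensional subcomplexes $\mathcal{K}\subseteq\tau$, sets $\kappa=|\{\text{vertices of }\mathcal{K}\}|-n-1$, and proves $b_{0}(L\cap\mathcal{K})\le\kappa+\mathbb{1}_{\kappa<2}$ by induction on $\kappa$. The inductive step is to pick a leaf chamber $u$ of the dual graph and remove all $n$-simplices of $\mathcal{K}$ having a vertex in $u$; this produces a smaller pure $n$-complex $\mathcal{K}'$ with $b_{0}(L\cap\mathcal{K}')=b_{0}(L\cap\mathcal{K})-1$, and no re-triangulation is needed. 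This sidesteps precisely the difficulty your induction runs into, and is where the extra ``$-1$'' for $k\ge 2$ comes from, via the explicit analysis at $\kappa=1$ and $\kappa=2$. If you want to salvage your approach, you would need both a proof that a suitable removable vertex $a^{*}$ exists with controlled star re-triangulation, and a separate argument replacing the paper's $\kappa=2$ contradiction to gain the final unit in the bound.
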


\begin{proof}
By Proposition \ref{P:Patchwork} and Lemma~\ref{L:basic}, it is enough to show that the given bound holds for \(b_0(L)\).
	
	Let us construct the dual graph \(G\). The vertices of \(G\) are the chambers of \(Q \setminus L\). There is an edge between two chambers if their closures
	have a non-empty intersection (equivalently, they are the neighboring chambers of a same connected component of \(L\)). We claim that \(G\) is in fact a tree (i.e., it does not contain cycles). Indeed, if \(e=(u,v)\) is an edge from \(G\), it means that \(u\) and \(v\) are two neighbouring chambers of a same connected component \(C_e\) of \(L\). Any path from \(u\) to \(v\) in $G$ corresponds to a path in \(Q\) between a point of \(u\) to a point of \(v\). We saw that such a path has to intersect \(C_e\). That is to say, any path in \(G\) from \(u\) to \(v\) contains the edge \(e\). Consequently, \(G\) is a tree, and so its number of edges equals its number of vertices minus one. So we already get that \(b_0(L) \leq n+k\) since there is at least a point of \(\A\) in each chamber. In particular we get the stated bound in the case \(n=1\).
	
	From now, assume \(n \geq 2\). To get the stated bound, we want to ensure that some chambers contain several points of \(\A\). Let us see the triangulation $\tau$ as a pure\footnote{A \(n\)-complex is called \emph{pure} if any simplex is a face of a \(n\)-dimensional simplex.} simplicial \(n\)-complex 
and let $\mathcal{K}$ be any pure $n$-dimensional subcomplex of $\tau$.
Let \(\kappa = \lvert \{\text{vertices in }\mathcal{K}\}\rvert -n-1 \geq 0\). We show by induction on \(\kappa\) that \(b_0(L \cap \mathcal{K}) \leq \kappa + \mathbb{1}_{\kappa<2}\) which would prove the proposition taking $\mathcal{K}=\tau$.
	
	Assume that \(\kappa=0\), i.e., \(\mathcal{K}\) has \(n+1\) vertices. Since \(\mathcal{K}\) is pure of dimension \(n\), then \(\mathcal{K}\) contains an unique $n$-simplex. Consequently, \(b_0(L\cap \mathcal{K}) \leq 1\) which is the required bound.
	
	Assume now that \(\kappa=1\), i.e., \(\mathcal{K}\) has \(n+2\) vertices.  Let \(u\) be one of the leaves of \(G\). So \(u\) is connected to the remainder of \(G\) by an edge \(e=(u,v)\) corresponding to a connected component \(C_e\) of \(L\). Let \(\mathcal{K}^\prime\) be the pure subcomplex of  \(\mathcal{K}\) consisting of all $n$-simplices (together with their faces) of $\mathcal K$ without vertices in the chamber $u$. Note that \({\mathcal K'} \cap C_{e} = \emptyset\) and that \(\mathcal{K}^\prime\) is either empty or a pure $n$-dimensional complex.
Let $C'$ be a connected component of $L$ such that $C' \neq C_e$. Since $u$ is a leaf of $G$, we get that $C'$ does not intersect any $n$-simplex of $\mathcal K$ having a vertex in $u$, in other words, $C'$ is contained in \(\mathcal{K}^\prime\).
So \(b_0(L\cap \mathcal{K}^\prime) = b_0(L\cap\mathcal{K})-1\). If \(\mathcal{K}^\prime\) is empty, then \(b_0(L\cap \mathcal{K}) \leq 1\) which is what is wanted. Otherwise, \(\mathcal{K}^\prime\) is pure $n$-dimensional and thus contains at least \(n+1\) vertices, and so exactly $n+1$. By induction, \(b_0(L \cap \mathcal{K}^\prime) \leq 1\), which implies that \(b_0(L \cap \mathcal{K}) \leq 2\) which is the required bound.
	
	Before going to the following case, let us focus on the case where the bound \(2\) is reached. In this case \(G\) is a path of length \(2\): \(u-v-w\). As said before, \(u \cup v\) and \(v\cup w\) contain exactly $n+1$ vertices, thus  \(u\) and \(w\) contain each exactly one vertex, and  \(v\) contains \(n\) vertices spanning an hyperplane of \(\R^n\). 
	
	Assume now that \(\kappa=2\), i.e., \(\mathcal{K}\) has \(n+3\) vertices. We begin as in the case $\kappa=1$. Let \(z\) be a leaf of \(G\). Let \(\mathcal{K}^\prime\) be the pure subcomplex of  \(\mathcal{K}\) consisting of all $n$-simplices (together with their faces) of $\mathcal K$ without vertices in the chamber $z$. Then as before, we get \(b_0(L\cap \mathcal{K}^\prime) = b_0(L\cap\mathcal{K})-1\). Thus, by induction \(b_0(L \cap \mathcal{K}^\prime) \leq 2\), and so  \(b_0(L \cap \mathcal{K}) \leq 3\). Assume now that \(b_{0}(L \cap \mathcal{K})=3\), which implies  \(b_0(L \cap \mathcal{K}^\prime) = 2\). We know that \(z\) is a leaf and from the case $\kappa=1$ above (using \(b_0(L \cap \mathcal{K}^\prime) = 2\)), we get that the remainder of the graph is a length-\(2\) path \(u-v-w\) such that $u,w$ contains each one vertex and $v$ contains $n$ vertices. Furthermore, since \(\mathcal{K}^\prime\) contains \(n+2\) vertices, the chamber \(z\) contains only one vertex. The edge leaving \(z\) corresponds to a connected component $C_{z}$ of \(L\), so the union of the two neighbouring chambers
($z$ and one chamber among $u,v,w$) of $C_{z}$ contains at least \(n+1\) vertices (the vertices of a $n$-simplex of  \(\mathcal{K} \) intersected by $C_{z}$), and thus it is $v$.
Consequently, the graph \(G\) is a star of center \(v\) with three branches \(u\),\(w\), and \(z\).
This implies that the convex hulls of \(\A \cap (v\cup u)\), \(\A \cap (v\cup w)\), and \(\A \cap (v\cup z)\) are three $n$-simplices of $\mathcal K$ with disjoint interiors and sharing a common \((n-1)\)-dimensional face, which is impossible. Consequently, \(b_0(L \cap \mathcal{K}) \leq 2\).

To finish, assume now that \(\kappa >2\). Let \(u\) be a leaf of \(G\). We construct again a pure \(n\)-complex \(\mathcal{K}^\prime\) by removing vertices from \(u\) (which has at most \(n+\kappa\) vertices). By induction, \(b_0(L \cap \mathcal{K}^\prime) \leq \kappa-1\), and so  \(b_0(L \cap \mathcal{K}) \leq \kappa\). 
\end{proof}

The bound of Theorem \ref{patchwork} is sharp for \(n=1\): it suffices to take coefficients $c_a$ such that consecutive coefficients have opposite signs and an height function such that
$\A$ equals the set of vertices of the corresponding triangulation.

The next example shows that the bound continues to be sharp in dimension \(n \geq 2\) (up to the multiplicative factor \(\frac{1}{1+1/n!}\)).

\begin{ex}
Let us fix \(n \ge 2\). We define a family of piecewise linear hypersurfaces \((L_p)_{p \in \mathbb{N}}\) of dimension \(n-1\). Each hypersurface \(L_p\) is obtained by the combinatorial patchworking construction from a triangulation with \(n+k+1\) vertices of an \(n\)-simplex. The construction is designed so that the ratio between \(k\) and the number of connected components of \(L_p\) goes to \(\frac{1}{1+1/n!}\) when \(p\) goes to infinity.
        This construction is given by the \(p\)-fold edgewise subdivision of the simplex as defined in~\cite{EG99,BR05} (see also~\cite{KKMS73} Example 2.3). Let \(p\) be a positive number. Given an \(n\)-simplex \(\sigma\) with vertices \(P_0,\ldots,P_n\), let \(\tau_p\) be the \(p\)-fold edgewise subdivision of \(\sigma\). Its vertices are all points with barycentric coordinates \((b_0,\ldots,b_n)\) (with respect to \(P_0,\ldots,P_n\)) where each \(b_i\) is an integer multiple of \(1/p\). So, each vertex is written as \(\sum_{i=0}^n \frac{a_i}{p}P_i\) where the \(a_i\) are non-negative integers such that \(\sum_{i=0}^n a_i = p\). Consequently, the triangulation has \(\binom{p+n}{n}\) vertices. It is known (see again~\cite{EG99}) that the number of \(n\)-simplices is exactly \(p^n\). Moreover, this triangulation is regular (Lemma 2.4 in~\cite{KKMS73}).
    We refine \(\tau_p\) into a triangulation \(\tau'_p\) by adding a new vertex in the interior of each \(n\)-simplex of \(\tau_p\). The new triangulation \(\tau'_p\) is still regular. We associate to each vertex of \(\tau_p\) a positive sign, and to each other vertex of \(\tau'_p\) a negative one. It defines the piecewise linear hypersurface \(L_p\). Notice that \(L_p\) has as many connected components as there are \(n\)-simplices in \(\tau_p\). The codimension \(k\) is given by the total number of vertices of \(\tau'_p\) minus \(n+1\) which is \(\binom{p+n}{n}+p^n-n-1 = (1+1/n!)p^n + O(p^{n-1})\).
\end{ex}

We show now that we get a similar result by relaxing the constraint over \(h\) when the codimension is small (we are not anymore in the setting of the combinatorial Viro patchworking Theorem).

\begin{prop}\label{non patchwork}
	Let $\A$ be a finite set in $\R^{n}$ such that $\codim \A=2$ and $\dim \A \geq 2$.
	Consider a Viro polynomial $f_{t}(x)=\sum_{a \in \A} c_a t^{h_a}x^{a}$. Assume that
	$\rk \mAh =\rk \mA+1$. Then for generic enough coefficients $c_{i}$, there exists $t_{0}>0$ such that for any $0<t<t_0$ we have
	$b_{0}(V_{>0}(f_{t})) \leq 2$.
\end{prop}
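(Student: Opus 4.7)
The plan is to deduce the bound from Theorem~\ref{patchwork} via a deformation of the height function. Applying a monomial change of coordinates with real exponents (see Section~\ref{monomial}), I reduce to the case $d = \dim \A = n$, so that $Q = \conv(\A) \subset \R^n$ has full dimension and, by the rank assumption $\rk \mAh = n + 2$, the polytope $\hat Q = \conv(\Ah) \subset \R^{n+1}$ has dimension $n + 1$. The lower facets of $\hat Q$ then induce a polyhedral subdivision $\tau_h$ of $Q$; but since $\codim \Ah = 1$, this subdivision need not be a triangulation, and Theorem~\ref{patchwork} cannot be applied directly.

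I pick a generic $\tilde h \in \R^\A$ and set $h_\epsilon = h + \epsilon \tilde h$ for small $\epsilon > 0$. For generic $\tilde h$ and sufficiently small $\epsilon > 0$, the height $h_\epsilon$ satisfies Assumption~\ref{A:comb}: the perturbation breaks the single affine dependence in $\Ah$, so the induced subdivision $\tau_{h_\epsilon}$ is a regular triangulation refining $\tau_h$. Applying Theorem~\ref{patchwork} to $h_\epsilon$, and using the refined bound valid for $n \geq 2$ and $k = 2$, yields $b_0(V_{>0}(f_{t, h_\epsilon})) \leq k = 2$ for all sufficiently small $t > 0$.

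The final step is to compare $V_{>0}(f_t)$ with $V_{>0}(f_{t, h_\epsilon})$ via Proposition~\ref{isotopic}. For generic coefficients $c_a$, Assumption~\ref{A:gen} holds for every $h_s := h + s \tilde h$ with $s \in [0, \epsilon]$: every truncation $f^F$ to a cell of the corresponding subdivision defines a smooth positive hypersurface. By Proposition~\ref{P:Patchwork} together with a continuity argument, there exists $t^{*} > 0$ such that for every $(s, t) \in [0, \epsilon] \times (0, t^{*})$, the Viro polynomial $f_{t, h_s}$ belongs to $\R^\A_{\mathrm{gen}}$. For any such fixed $t$, the path $s \mapsto f_{t, h_s}$ is then a smooth curve in $\R^\A_{\mathrm{gen}}$ which, by openness of $\R^\A_{\mathrm{gen}}$ and for $\epsilon$ small, is contained in a single connected component. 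Proposition~\ref{isotopic} thus gives a homeomorphism $V_{>0}(f_t) \cong V_{>0}(f_{t, h_\epsilon})$, hence $b_0(V_{>0}(f_t)) \leq 2$.

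The main obstacle is the uniform positivity of $t^{*}$: one has to show that the critical parameter at which $V_{>0}(f_{t, h_s})$ first becomes singular (as $t$ grows from zero) is bounded away from zero uniformly in $s \in [0, \epsilon]$. This should follow from the continuity of this critical parameter as a definable function of $(h_s, c)$ in the o-minimal structure $\overline{\R}^{S}$ of Section~\ref{S:Adiscr}, combined with the compactness of $[0, \epsilon]$ and the fact that Assumption~\ref{A:gen} holds at every $s$ for generic $c_a$.
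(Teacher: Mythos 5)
Your strategy — perturb $h$ to a generic $h_\epsilon$ satisfying Assumption~\ref{A:comb}, apply Theorem~\ref{patchwork} to $h_\epsilon$, and then transport the bound back along an $s$-isotopy — is genuinely different from the paper's. The paper keeps $h$ fixed and instead analyzes the polyhedral subdivision $\tau$ directly: it observes that in codimension~$2$ at most one cell $F'$ of $\tau$ can be a circuit, introduces a \emph{secondary} height $g$ supported on that circuit (extended by $0$ on the star of $F'$), and shows that the Viro deformation in $g$ has the \emph{same} single critical time $t_0$ on every cell of the star, so that $Z$ is homeomorphic to a patchworking hypersurface $L$ built from a coherent refinement $\tau_\ell$ or $\tau_u$. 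This localizes the problem to a single circuit and never needs a uniform-in-$s$ estimate.

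The step you flag as "the main obstacle" is indeed a genuine gap, and the proposed fix does not close it. You need $t^*>0$ so that the whole rectangle $[0,\epsilon]\times(0,t^*)$ maps into $\R^\A_{\mathrm{gen}}$; equivalently, the first critical parameter $\theta(s)=\inf\{t>0: f_{t,h_s}\in\nabla\}$ must be bounded below away from $0$ on $[0,\epsilon]$. Definability plus compactness does not yield this: a definable function on a compact interval is only \emph{piecewise} continuous, and the closedness-of-$\nabla$ argument for lower semicontinuity of $\theta$ at $s=0$ breaks down precisely in the scenario one needs to rule out, namely $\theta(s_j)\to 0$, because $c_a t^{(h_{s_j})_a}$ then has no limit in $\R^\A$ (some coefficients blow up, others vanish) and one cannot pass to a limit point of $\nabla$. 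In fact the danger is concrete: for $s>0$ the Gale system associated to a non-defective face can have \emph{more} solutions than at $s=0$, and those extra solutions must escape to the boundary of $\p\mathscr{C}_{\mD}^\nu$ as $s\to 0^+$; via~\eqref{tcritk=k}, solutions near the boundary are exactly those whose $t$-coordinate can run off to $0$ or $\infty$. There is also a structural issue with the o-minimal phrasing: $t^{(h+s\tilde h)_a}=t^{h_a}\,e^{s\tilde h_a\ln t}$, so viewing $s$ as a variable leaves the polynomially bounded structure $\overline{\R}^S$ and would force you into $\R_{\mathrm{exp}}$, sacrificing the very tameness you invoke. Closing the gap would require a direct quantitative analysis of the Gale systems (showing solutions stay away from $\partial\p\mathscr{C}_{\mD}^\nu$ uniformly for small $s$), which essentially re-derives what the paper obtains by its local secondary-height construction; so while your deformation idea is clean in outline, as written it is not a complete proof.
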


\begin{proof}
Denote by $\tau$ the convex polyhedral subdivision of $Q=\ch(\A)$ obtained by projecting the lower facets of the convex hull of \(\{(a,h_a) \in \R^{n+1} \mid a \in \A\}\). Perturbing slightly the coefficients $c_a$ if necessary, we may assume that Assumption \ref{A:gen} is satisfied and apply Proposition \ref{P:Patchwork} to get an homeomorphism between the hypersurface $Z$ defined in Subsection \ref{S:Viro} and $V_{>0}(f_t)$ for $t>0$ small enough. We now show that the hypersurface $Z$ is homeomorphic to some piecewise linear hypersurface $L$ obtained by the combinatorial patchworking construction starting from $\A$ and some associated coefficient and height functions as in Subsection \ref{S:Viro}. Then the result will follow directly from Theorem~\ref{patchwork}.

Let $\A'$ be the basis of $\A$ (recall that $\A'=\A$ if $\A$ is not a pyramid). Then $Q'=\ch(\A')$ is a face of $Q$ and the polytopes of $\tau$ contained in $Q'$ form a polyhedral subdivision of $Q'$. Moreover all polytopes of $\tau$ which do not belong to $\tau'$ are pyramids over polytopes of $\tau'$. From $\rk \mAh =\rk \mA+1$ we get that for any polytope $P' \in \tau'$ we have $P' \cap \A' \neq \A'$ which implies $\codim P' \cap \A' \leq 1$ since $\codim \A'=2$ and $\A'$ is not a pyramid (Lemma \ref{codim-pyramid}).
If $\codim P' \cap \A' =0$ then $P'$ is a simplex with set of vertices $P' \cap \A'$, and vice versa. Consequently, 
we have $\codim P' \cap \A' =0$ for any $P' \in \tau'$ if and only if $\tau'$ is a triangulation of $\A'$ with set of vertices $\A'$, which in turn is equivalent to the fact that $\tau$ is a triangulation with set of vertices $\A$ and that $P \cap \A$ equals the set of vertices of $P$ for any $P \in \tau$. In that case $Z$ is homeomorphic to a piecewise linear hypersurface $L$ as required and the result follows from Theorem~\ref{patchwork}.

Assume now that there exists $P' \in \tau'$ such that $\codim P' \cap \A' =1$. Then  $P' \cap \A' $ is a circuit or a pyramid over a circuit (its basis), and this circuit is equal to $F' \cap \A'$ for some face $F'$ of $P'$. In particular, $F'$ belongs to the subdivision $\tau'$. Consider the star $st_{\tau'}(F')$ of $F'$ in $\tau'$, that is, the set of all polytopes in $\tau'$ having $F'$ as a face. For any $\Delta' \in st_{\tau'}(F')$, we have $\codim \Delta' \cap \A' =\codim F' \cap \A' =1$ and thus $\Delta '\cap \A'$ is a pyramid over $F' \cap \A'$. Coming back to the subdivision $\tau$ of $Q$, we get that for all polytopes $\Delta$ in $st_{\tau}(F')$ (polytopes of $\tau$ having $F'$ as a face)  the set $\Delta \cap \A$ is a pyramid over $F' \cap \A' $. 

In passing we note that a consequence of the previous fact is that there exists at most one $F' \in \tau$ such that  $F' \cap \A$ is a circuit. Indeed, assume on the contrary that there are two distinct $F'_{1}, F'_{2} \in \tau$ such that $F_{i}' \cap \A$ is a circuit for $i=1,2$. Then, for any $n$-polytopes $\Delta_{1} \in st_{\tau}(F_{1}')$ and $\Delta_{2} \in st_{\tau}(F_{2}')$ we have $\Delta_{1} \neq \Delta_{2}$ since a pyramid over a circuit cannot contain two distinct circuits. Thus $\Delta_{1} \cap \Delta_{2}$ is a possibly empty common face of dimension strictly smaller than $n$. Moreover, we get $\codim(\Delta_{1} \cap \Delta_{2} \cap \A)=0$ which yields then $|\Delta_{1} \cap \Delta_{2} \cap \A| \leq n$. Thus, $|(\Delta_{1} \cup \Delta_{2}) \cap \A|= 2(n+2)-|\Delta_{1} \cap \Delta_{2} \cap \A| \geq n+4 >n+3=|\A|$ giving a contradiction. 

Consider now any height function $g: F' \cap \A' \rightarrow \R$ which is not the restriction of an affine function and extend it by $0$ on the other points of $\Delta \cap \A$ for all polytopes $\Delta \in st_{\tau}(F')$. 
For each $\Delta \in st_{\tau}(F')$, consider the Viro polynomial $f_{|\Delta,t}(x)=\sum_{a \in \Delta \cap \A} c_{a}t^{g_a}x^{a}$. For $t=1$ this gives the polynomial $f_{|\Delta}$ which defines a nonsingular hypersurface in $(\R_{>0})^{n}$ by assumption. 
The only face $F$ of $\Delta$ such that $F \cap \A$ is not a pyramid is $F'$, so only the facial critical system corresponding to $F'$ can have a positive solution (see Corollary~\ref{cor:pyramidal}). This provides at most one value $t_0$ of $t$ such that the isotopy type of $\{f_{|\Delta,t}=0\} \subset (\R_{>0})^{n}$ might change passing through $t_{0}$. The key point is that $t_{0}$ is determined by $\sum_{a \in F' \cap \A'} c_{a}t^{g_a}x^{a}$ and thus does not depend on $\Delta \in st_{\tau}(F')$.
It follows that either for all $\Delta \in st_{\tau}(F')$ the hypersurface $\{f_{|\Delta}(x)=0\} \subset (\R_{>0})^{n}$ is isotopic to the hypersurface $\{f_{|\Delta,t}(x)=0\} \subset (\R_{>0})^{n}$ for any $0<t<t_{0}$, or for all $\Delta \in st_{\tau}(F')$ the hypersurface $\{f_{|\Delta}(x)=0\} \subset (\R_{>0})^{n}$ is isotopic to the hypersurface $\{f_{|\Delta,t}(x)=0\} \subset (\R_{>0})^{n}$ for any $t>t_{0}$ (both are true if the critical system corresponding to $F'$ has no positive solution).
For any $\Delta \in st_{\tau}(F')$, denote by $\tau_{\ell}(\Delta)$ (resp., $\tau_{u}(\Delta)$) the convex polyhedral subdivision of $\Delta$ obtained by projecting the lower faces (respectively, the upper faces) of the convex hull of \(\{(a,g_a) \in \R^{n+1} \mid a \in \Delta \cap \A\}\).
Since $g$ is not the restriction of an affine function on $F'$, the subdivisions $\tau_{\ell}(F')$ and $\tau_{u}(F')$ are non-trivial subdivisions, and are thus triangulations. Moreover, for each $\Delta \in st_{\tau}(F')$ the subdivision $\tau_{\ell}(\Delta)$ (resp., $\tau_{u}(\Delta)$) is obtained by subdividing $\Delta$ along $\tau_{\ell}(F')$ (resp., $\tau_{u}(F')$). Since $\Delta \cap \A$ is a pyramid over $F' \cap \A'$, it follows that  $\tau_{\ell}(\Delta)$ (resp., $\tau_{u}(\Delta)$) is a triangulation. It follows that the result of taking the union of the charts
$C_{\Delta}^{\circ}(f^{\Delta})$ for $\Delta \in st_{\tau}(F')$ can be obtained up to homeomorphism via the combinatorial patchworking process using either the triangulation $\tau_{\ell}(\Delta)$ for each $\Delta$ or the triangulation $\tau_{u}(\Delta)$ for each $\Delta$. Thus the hypersurface $Z$ is homeomorphic to a piecewise linear hypersurface $L$
obtained via the combinatorial patchworking construction using a triangulation which refines $\tau$ along a triangulation of a circuit as above. It remains to apply Theorem~\ref{patchwork}.  
\end{proof}

\section{Bounds for the number of non-defective faces}
\label{S:def}

In this section, we give bounds for the number of non-defective faces of a polytope $Q=\ch(\A)$ in terms of the dimension and codimension of $\A$.
Such bounds were already obtained in~\cite{FNR17} but as mentioned in the introduction the bounds in the cited paper are not correct. We first present families of polytopes with a number of non-defective faces which exceed drastically the bounds in~\cite{FNR17}. Then we present our new bounds.

\subsection{Configuration with many non-defective faces}

The polytopes described below are called Lawrence polytopes (see~\cite{BVSWZ} for a presentation).

Let \(m,k \in \mathbb{N}\). Set \(n=2m+k+1\). Let \( \A = \{v_1,\ldots,v_{m+k+1}\} \subset \mathbb{R}^m\) be \(m+k+1\) vectors such that any subset of \(m\) of them is affinely independent. In particular, they linearly generate \(\mathbb{R}^m\). As usual, we consider the associated  matrix \(\mA\) of size \((m+1)\times(m+k+1)\)
\[
\mA = \begin{pmatrix}
	1 & 1 & \cdots & 1 \\
	| & | & & | \\
	v_1 & v_2 & & v_{m+k+1}  \\
	| & | & & | 
\end{pmatrix}.
\]

We define the following set of points \(\mathcal{L} \subset \mathbb{R}^{n}\) given by its associated matrix \(\mA_{\mathcal{L}}\) defined by the blocks decomposition:
\begin{equation*}
	\mA_{\mathcal{L}} \defeq \begin{pmatrix}
		\mA & 0 \\
		I_{m+k+1} & I_{m+k+1}
	\end{pmatrix}
\end{equation*}
where \(I_{m+k+1}\) is the identity matrix of size \(m+k+1\) and \(0\) is the \(0\) matrix of size \((m+1)\times(m+k+1)\). Consequently, \(\mA_{\mathcal{L}}\) has size \((2m+k+2)\times(2m+2k+2)\), which is \((n+1)\times(n+k+1)\), has maximal rank, and corresponds to a homogeneous configuration \(\mathcal{L}_A \subset \mathbb{R}^{n}\)  (since the last \(m+k+1\) rows sum to the \(1\)'s row). 
The convex hull forms a Lawrence polytope. Let us denote its points (following the order of the columns) by \((\tilde{v}_1,\ldots,\tilde{v}_{m+k+1},\tilde{u}_1,\ldots,\tilde{u}_{m+k+1})\). 

Following Lemma 9.3.1 in~\cite{BVSWZ}, circuits of \(\mathcal{L}\) are exactly of the form \(\{\tilde{v}_{i} \mid i \in I\}\cup\{\tilde{u}_i \mid i \in I\}\) where \(\{v_i \mid i\in I\}\) is a circuit of \(\A\). Moreover they are also faces (and so non-defective faces) of the convex hull of \(\mathcal{L}_A\).

Consequently, this gives a counter-example to Proposition 3.4 in~\cite{FNR17}.
\begin{prop}\label{Lawrence}
	Given two integers \(k,n\) with \(n \geq k+1\), choosing \(m = \lfloor (n-k-1)/2\rfloor\), there exists a configuration of points of dimension \(n\) and codimension \(k\) with at least \(\binom{\lfloor(n+k+1)/2\rfloor}{k}\) non-defective faces.
\end{prop}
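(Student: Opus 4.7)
The approach is to verify directly that the Lawrence construction introduced just before the statement has the claimed properties. I set $m = \lfloor(n-k-1)/2\rfloor$ and choose $\A = \{v_1,\ldots,v_{m+k+1}\} \subset \R^m$ in sufficiently general position, so that every $m$-subset of $\A$ is affinely independent (any Zariski-generic choice works). I then form the configuration $\mathcal{L}$ with matrix $\mA_{\mathcal{L}}$ as in the excerpt; a direct rank computation on the block form of $\mA_{\mathcal{L}}$ shows that $\mathcal{L}$ has affine dimension $2m+k+1$ and codimension $k$.

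A small parity preliminary is required to hit the target dimension $n$. If $n-k-1$ is even then $\dim\mathcal{L}=n$ already. Otherwise $\dim\mathcal{L}=n-1$; I replace $\mathcal{L}$ by the pyramid $\mathcal{L}\cup\{a^*\}$ over an apex $a^*$ placed outside the affine hull of $\mathcal{L}$ in $\R^n$. This pyramid has codimension $k$, and since every face of the pyramid that contains $a^*$ is itself a pyramid (and hence defective), the non-defective faces of the pyramid coincide with those of $\mathcal{L}$.

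The main step is to show that for every circuit $C$ of $\A$ the subset $\tilde{C}:=\{\tilde{v}_i,\tilde{u}_i \mid i \in C\}$ is simultaneously a circuit of $\mathcal{L}$ and a face of $\conv(\mathcal{L})$. The circuit statement is Lemma~9.3.1 of \cite{BVSWZ}. For the face statement I apply the coface criterion of Remark~\ref{rem:cofaceGale}: since a Gale dual of $\mA_{\mathcal{L}}$ is $\mB_{\mathcal{L}}=\bigl(\begin{smallmatrix}\mB\\-\mB\end{smallmatrix}\bigr)$, the rows of $\mB_{\mathcal{L}}$ indexed by $\mathcal{L}\setminus\tilde{C}$ come in antipodal pairs $(B_i,-B_i)$ for $i\notin C$; assigning coefficient $1$ to both members of each pair produces a strictly positive combination summing to $0$. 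Hence $\mathcal{L}\setminus\tilde{C}$ is a coface and $\tilde{C}$ is a face. Because $\tilde{C}$ is a circuit it is non-pyramidal of codimension $1$, and therefore non-defective.

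Finally, I count the circuits. Under the general-position hypothesis the matroid of $\A$ is uniform of rank $m+1$ on $m+k+1$ elements, so its circuits are exactly the $(m+2)$-subsets; this gives the asserted number of distinct non-defective faces $\tilde{C}$ of $\conv(\mathcal{L})$, using the identity $m+k+1 = \lfloor(n+k+1)/2\rfloor$, valid in both parity cases. The only genuinely delicate step is the face-identification above: it combines BVSWZ's oriented-matroid description of Lawrence circuits with the Gale-duality description of faces, and it is precisely the antipodal symmetry $\mB_{\mathcal{L}}=\bigl(\begin{smallmatrix}\mB\\-\mB\end{smallmatrix}\bigr)$ that makes the witness of cofaceness so clean.
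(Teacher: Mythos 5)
Your overall strategy matches the paper's: construct the Lawrence configuration $\mathcal{L}$, identify its circuits with those of $\A$ via BVSWZ Lemma~9.3.1, observe that each such circuit is a face (hence non-defective), and count. Your Gale-dual witness for the coface property via $\mB_{\mathcal{L}}=\bigl(\begin{smallmatrix}\mB\\ -\mB\end{smallmatrix}\bigr)$ and Remark~\ref{rem:cofaceGale} is correct and nicely explicit (the paper just asserts the face property), and your pyramid-over-$\mathcal{L}$ parity adjustment is also correct and a point the paper's prose leaves implicit. Two remarks are worth making there, though. First, the stated general-position hypothesis (``every $m$-subset affinely independent'') is not strong enough to pin down the circuit sizes; what you actually use is that every $(m+1)$-subset is affinely independent, i.e.\ the affine matroid is $U_{m+1,\,m+k+1}$, which does hold for a Zariski-generic $\A$.

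The genuine gap is the final count. For the uniform matroid $U_{m+1,\,m+k+1}$ the circuits are precisely the $(m+2)$-subsets of $\A$, so the Lawrence construction exhibits
\[
\binom{m+k+1}{m+2} \;=\; \binom{m+k+1}{\,k-1\,}
\]
non-defective faces, \emph{not} $\binom{m+k+1}{k}$. Your identity $\lfloor(n+k+1)/2\rfloor=m+k+1$ is correct in both parities, but you then silently pass from $\binom{m+k+1}{m+2}$ to $\binom{m+k+1}{k}$, and $(m+k+1)-(m+2)=k-1\neq k$. The discrepancy is already visible for $k=1$: there $\A$ is a single circuit of $m+2$ points, $\mathcal{L}$ is itself a circuit, and the only non-defective face is $\mathcal{L}$, so the count is $1=\binom{m+2}{0}$, whereas $\binom{m+2}{1}=m+2$. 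The Lawrence circuit argument therefore only yields the lower bound $\binom{\lfloor(n+k+1)/2\rfloor}{k-1}$; to reach $\binom{\lfloor(n+k+1)/2\rfloor}{k}$ you would need to exhibit additional non-defective faces (higher-codimension non-pyramidal faces of $\mathcal{L}$ of the form $\tilde{S}$ with $|S|>m+2$, say), and you would need to argue that those are non-defective, which you have not done. The off-by-one appears to be present in the paper's statement as well, since the paper's sketch likewise only exhibits circuits; still, as written your step ``this gives the asserted number'' is incorrect and should be flagged and repaired.
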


Let us show now that we can still obtain some non-trivial upper bounds for the number of non-defective faces.

\subsection{Upper bounds}

Let \(\A\) be a configuration of vectors of dimension \(n\) and codimension \(k\) given by its associated matrix \(\mA\). We assume furthermore that \(\A\) is not pyramidal. Let \(\mB\) be a matrix Gale dual to \(\mA\). We call \((B_i)_{i\in \A}\) the rows of \(\mB\). 

We want to bound from above the number of non-defective faces of \(\A\). In particular, it is sufficient to bound the number of subsets of \(\A\) which are not a pyramid.

For \(0 \leq \ell \leq k\), we define the sets
\[
E_\ell \defeq \{  \S \subset \A \mid \text{codim}(\S)=\ell \text{ and } \S \text{ is not a pyramid}\}.
\]
First notice that a simplex is a particular case of pyramid, hence  \(E_0\) is the empty set.

Let \(\mathcal{F}^{*}_r\) be the set of flats of \(\MatrAd\) of rank \(r\). Say differently, it corresponds to the set of subsets \(\S\) of \(\A\) such that \(\dim(\text{span}(\{B_i \mid i\in \S\})) = r\) and such that if \(B_j \in \text{span}(\{B_i \mid i\in \S\})\) then \(j\in I\) (closure property).

\begin{lem}
	For all \(0 < \ell \leq k\) 
	we have \(\S \in E_\ell\) if and only if \(\A\setminus\S \in \mathcal{F}^{*}_{k-\ell}\).
\end{lem}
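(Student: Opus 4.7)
The plan is to translate both the rank condition and the closure (flat) condition defining $\mathcal{F}^{*}_{k-\ell}$ into statements about codimensions of $\S$ and of its one-element deletions $\S\setminus\{a\}$, and then match these against the definition of $E_\ell$ via Lemma~\ref{codim-pyramid}.

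First, I compute $r_{\MatrAd}(\A \setminus \S)$ for an arbitrary $\S \subseteq \A$. Since $\MatrAd$ is the matroid dual of $\MatrA$, the standard matroid duality rank formula gives
\[
r_{\MatrAd}(\A \setminus \S) \;=\; |\A \setminus \S| + r_{\MatrA}(\S) - r_{\MatrA}(\A).
\]
Using $r_{\MatrA}(Y)=\dim Y + 1$ and $|\A|=d+k+1$, a short calculation simplifies this to $r_{\MatrAd}(\A \setminus \S) = k - \codim \S$. Hence the rank-$(k-\ell)$ requirement is equivalent to $\codim \S = \ell$.

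Next I handle the closure condition. By definition $\A \setminus \S$ is a flat of $\MatrAd$ iff adjoining any $a \in \S$ strictly increases the rank. Applying the rank formula just derived to $(\A \setminus \S) \cup \{a\} = \A \setminus (\S \setminus \{a\})$, this condition becomes
\[
\codim(\S \setminus \{a\}) \;<\; \codim \S \qquad \text{for every } a \in \S.
\]

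Finally, I match this with the definition of $E_\ell$. By Lemma~\ref{codim-pyramid} applied with $\S$ in place of $\A$ and $\S\setminus\{a\}$ in place of its subset, one has $\codim(\S\setminus\{a\}) = \codim\S$ iff $\S$ is a pyramid over $\S \setminus \{a\}$. By the very definition of pyramid, $\S$ is a pyramid iff there exists some $a \in \S$ for which $\S$ is a pyramid over $\S\setminus\{a\}$; so the strict inequality above holds for \emph{every} $a \in \S$ precisely when $\S$ is not a pyramid. Combining the two steps, $\A\setminus\S \in \mathcal{F}^{*}_{k-\ell}$ iff $\codim\S = \ell$ and $\S$ is not a pyramid, i.e., iff $\S \in E_\ell$. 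The only non-formal ingredient is the matroid duality rank formula, a classical identity; the remainder is a direct matching of definitions, so I do not anticipate a genuine obstacle.
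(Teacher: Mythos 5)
Your proof is correct. Both you and the paper begin identically, with the dual-rank formula $r_{\MatrAd}(\A\setminus\S)=|\A\setminus\S|+r_{\MatrA}(\S)-r_{\MatrA}(\A)=k-\codim\S$. Where you diverge is in handling the flat/pyramid equivalence: the paper characterizes a pyramid via the existence of an element belonging to no circuit, rephrases this as $(\MatrA\mid\S)^{*}$ having a loop, identifies that dual-restriction with the contraction $\MatrAd\cdot\S$ (citing Welsh), and then concludes $\S$ is a pyramid iff $\A\setminus\S$ is not a flat (citing Welsh again). You instead unfold the flat condition literally ($\A\setminus\S$ is a flat iff adjoining any $a\in\S$ strictly increases the rank), feed each one-element addition back through the same rank formula to get $\codim(\S\setminus\{a\})<\codim\S$ for all $a\in\S$, and then invoke the paper's own Lemma~\ref{codim-pyramid} to recognize this as exactly the negation of being a pyramid. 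Your route is more self-contained — it needs only the dual-rank formula and a lemma already established in the paper, avoiding the detour through restriction/contraction duality and loops — while the paper's route is terser for a reader fluent in matroid theory and emphasizes the structural content (the duality between pyramids and non-flats). Both are valid; yours trades two external citations for a short explicit computation.
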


\begin{proof}
	We show that this lemma directly follows from standard facts from matroid theory but, for readers unfamiliar with this theory, the lemma can also be easily proved by direct arguments of linear algebra.
	
	First, \(\rk^{*}(\A\setminus\S) = \lvert \A\setminus \S\rvert + \rk(\S) - \rk(\A) = (\lvert \A\rvert - \rk(\A)) - (\lvert S\rvert - \rk(\S))\). Hence any set \(\A\setminus \S\) has rank \(k-\codim(\S)\) in \(\MatrAd\).
	
	A subset \(\S\) is a pyramid if and only if there is an element \(e \in \S\) which does not belong to any circuit from \(\S\). That is to say, \(\S\) is a pyramid if and only if \((\MatrA \mid \S)^{*}\), the dual of the matroid restricted to \(\S\), contains a loop. By Theorem~2 (page 63 in~\cite{Welsh}), we know that \((\MatrA \mid \S)^{*} = \MatrAd \centerdot \S\) where \(\MatrAd \centerdot \S\) is the contraction of \(\MatrAd\) to \(\S\). Finally, we conclude (for example, by Exercise 3.2, page 64 in~\cite{Welsh}) that \(\S\) is a pyramid if and only if \(\A\setminus\S\) is not a flat of \(\MatrAd\).
\end{proof}

We know that \(E_\ell\) and \(\mathcal{F}^{*}_{k-\ell}\) have same cardinal. From any flat \(F\) of  \(\mathcal{F}^{*}_{k-\ell}\), we can extract an independent \(I \subset F\) (in \(\MatrAd\)) of rank \(k-\ell\) (obviously, we can retrieve the flat \(F\) from such an \(I\)). Such \(I\) has cardinal \(k-\ell\), consequently, the cardinal of  \(\mathcal{F}^{*}_{k-\ell}\) is bounded by the number of subsets of size \(k-\ell\) in \(\A\).

Moreover, we can slightly refine this upper bound when \(\S\) corresponds to a face of \(\A\). In this case, we saw in remark~\ref{rem:cofaceGale} that the origin belongs to the cone \(\sum_{a \notin \S} \R_{>0} B_a\). In particular, the rows \((B_a)_{a \notin \S}\) are linearly dependent, and for each row \(B_a\), there is a base of \((B_a)_{a \notin \S}\) which does not contain \(B_a\) (i.e., the restricted matroid has no coloop). It implies there exist at least \(k-\ell+1\) bases of \((\mB_a)_{a \notin \S}\). 

\begin{prop}\label{prop:number_nd_faces}
	The number of non-defective faces of \(\A\) of codimension \(\ell\) is bounded from above by
	\[
	\frac{1}{k-\ell+1}\binom{n+k+1}{k-\ell} = \frac{1}{n+k+2}\binom{n+k+2}{k-\ell+1}.
	\]
	
	Consequently, the total number of non-defective faces is bounded by 
	\[
	\sum_{j=0}^{k-1} \frac{1}{j+1} \binom{n+k+1}{j} \leq \binom{n+2k}{k-1}.
	\]
\end{prop}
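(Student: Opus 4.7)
The plan is to combine the lemma stated just before the proposition with the extra structure coming from the fact that we count faces rather than arbitrary non-pyramidal subsets, and then do a double count of pairs (face, basis of the dual-matroid restriction). First, observe that any pyramid is defective: by Corollary~\ref{cor:pyramidal} the critical system has no positive solution, so the $\A$-discriminant variety is empty. Hence every non-defective face $\S=\A\cap F$ of codimension $\ell$ lies in $E_\ell$, and the preceding lemma says that $\A\setminus\S$ is a rank-$(k-\ell)$ flat of $\MatrAd$. Because $\S$ is a face, its complement is a coface, so Remark~\ref{rem:cofaceGale} places the origin in the strict positive cone of $(B_a)_{a\in\A\setminus\S}$. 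This forces the restriction of $\MatrAd$ to $\A\setminus\S$ to have no coloop: from a positive linear dependence $\sum_a\mu_a B_a=0$ with all $\mu_a>0$ one expresses each $B_a$ as a combination of the remaining rows, so every element of $\A\setminus\S$ is omitted by some basis of the restriction.

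I would then double-count pairs $(\S,I)$ with $\S$ a non-defective face of codimension $\ell$ and $I\subset\A\setminus\S$ a basis of the restricted matroid (so $|I|=k-\ell$). Each independent set of rank $k-\ell$ in $\MatrAd$ uniquely determines its closure, hence the candidate $\S=\A\setminus\overline{I}$, so the number of such $I$ is at most $\binom{n+k+1}{k-\ell}$. On the other hand, the no-coloop condition yields at least $k-\ell+1$ bases per flat: fix a basis $B_0=\{e_1,\ldots,e_{k-\ell}\}$; for each $i$, since $e_i$ is not a coloop, strong basis exchange produces $f_i\notin B_0$ with $B_0-e_i+f_i$ a basis, and these $k-\ell$ exchanged bases are pairwise distinct (the one built from $e_i$ omits $e_i$ while each of the others still contains it) and all different from $B_0$. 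Combining the two counts gives
\[
(k-\ell+1)\,\#\{\text{non-defective faces of codimension }\ell\}\;\le\;\binom{n+k+1}{k-\ell},
\]
which is the stated bound; the equivalent expression follows from the identity $\tfrac{1}{r+1}\binom{N}{r}=\tfrac{1}{N+1}\binom{N+1}{r+1}$ with $N=n+k+1$ and $r=k-\ell$.

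For the cumulative bound, I sum over $\ell=1,\ldots,k$ (the case $\ell=0$ contributes nothing since every simplex is a pyramid, so $E_0=\emptyset$) and reindex by $j=k-\ell$ to obtain $\sum_{j=0}^{k-1}\tfrac{1}{j+1}\binom{n+k+1}{j}$. Finally, Vandermonde's identity gives $\binom{n+2k}{k-1}=\sum_{j=0}^{k-1}\binom{n+k+1}{j}\binom{k-1}{k-1-j}$, and since $\binom{k-1}{k-1-j}=\binom{k-1}{j}\ge 1\ge\tfrac{1}{j+1}$ for $0\le j\le k-1$, the estimate $\sum_{j=0}^{k-1}\tfrac{1}{j+1}\binom{n+k+1}{j}\le\binom{n+2k}{k-1}$ follows termwise. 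The only nontrivial point is the ``at least $k-\ell+1$ bases'' lower bound, for which the coface hypothesis (via Remark~\ref{rem:cofaceGale}) is essential in providing the no-coloop property used by the basis-exchange argument; the remaining steps are routine combinatorial bookkeeping.
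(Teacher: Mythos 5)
Your proof is correct and follows essentially the same route as the paper: identify non-defective faces of codimension $\ell$ with rank-$(k-\ell)$ flats of the dual matroid via the preceding lemma, use Remark~\ref{rem:cofaceGale} to see that the restriction of $\MatrAd$ to the complement of a face has no coloop, and then double-count pairs (flat, basis of the restriction). You spell out two details the paper leaves implicit — the basis-exchange argument showing that a rank-$r$ matroid without coloops has at least $r+1$ bases, and the Vandermonde/termwise comparison giving the final inequality — but the structure of the argument is the same as the text preceding the proposition.
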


\section{Bounds for the number of connected components of hypersurfaces}
\label{Final bounds}
We first present our bounds in the general case. Then we refine the previous bounds when the codimension equals two and show that the resulting bound is sharp
in dimension two.

\subsection{General case}
Our general bounds are as follows.

\begin{theo} \label{thm_generic_bound}
	Let $\A$ be a finite set in $\R^{n}$ such that $\codim \A= k$. Let \(m\) be the dimension of the basis of \(\A\).
	For any smooth real polynomial $f=\sum_{a\in \A}c_a x^{a}$, 
	we have
	\begin{equation}
		b_{0}(V_{>0}(f)) \leq  1+k+\frac{e^2+3}{8(m+k+2)} \sum_{j=0}^{k-1} \binom{m+k+2}{k-j}  2^{\binom{j}{2}} (m+1)^{j}.
	\end{equation}
	The bound can be replaced by the weaker but simpler expression
	\begin{equation}
		b_{0}(V_{>0}(f)) \leq 8 (m+1)^{k-1} 2^{\binom{k-1}{2}}.
	\end{equation}
\end{theo}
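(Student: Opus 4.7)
The strategy implements the general framework sketched in the introduction: embed $f$ in a generic Viro family, bound $b_0$ at the endpoints $t\to 0^+$ and $t\to+\infty$ by combinatorial patchworking, and control intermediate changes by counting crossings with the discriminant locus $\nabla$.

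By Proposition~\ref{isotopic} the connected component of $f$ in $\R^{\A}_{\mathrm{gen}}$ is preserved by small perturbations of the coefficients $(c_a)$, so I would first replace $f$ by a generic representative of its component without changing the homeomorphism type of $V_{>0}(f)$. Next I would fix a compatible height function $h\in\R^{\A}$ (Definition~\ref{de:h}) chosen generically enough that Assumption~\ref{A:comb} holds for both $h$ and $-h$, that the Viro path $t\mapsto (c_a t^{h_a})_{a\in\A}$ meets $\nabla$ only at smooth points (granted by Lemma~\ref{lem:intersection_nablas} for a generic path), and that the hypothesis of Proposition~\ref{Jens} (via Remark~\ref{R:no restrictive}) and the simplicity conclusion of Proposition~\ref{simple} hold. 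Theorem~\ref{patchwork} then yields $t_0>0$ small and $t_\infty>0$ large (applying the theorem to $h$ and to $-h$ respectively, the latter via $t\mapsto 1/t$) such that
\[
b_0(V_{>0}(f_t))\leq k+1 \quad\text{for } t\in(0,t_0]\cup[t_\infty,+\infty).
\]

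Let $N$ denote the total number of parameters $t\in(t_0,t_\infty)$ at which the path crosses $\nabla$. By Proposition~\ref{Jens}, $t\mapsto b_0(V_{>0}(f_t))$ changes by at most one at each crossing. Writing $N=N_1+N_2$ where $N_1$ counts crossings in $(t_0,1]$ and $N_2$ in $[1,t_\infty)$, one has both $b_0(V_{>0}(f))\leq (k+1)+N_1$ and $b_0(V_{>0}(f))\leq (k+1)+N_2$, hence
\[
b_0(V_{>0}(f))\leq (k+1)+\tfrac{N}{2};
\]
this doubling trick is precisely what produces the factor $\tfrac{1}{8}$ (rather than $\tfrac{1}{4}$) in the stated bound.

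To estimate $N$, I would note that by Corollary~\ref{cor:pyramidal} a pyramidal face contributes no positive solution to its critical system, while by genericity of the path the defective faces (whose discriminant has codimension $\geq 2$) are avoided. Hence only non-defective non-pyramidal faces $F$ of $Q$ contribute, and by Remark~\ref{rem:npfaces} every such $F$ is actually a face of the basis $\A'$, so $\dim F\leq m$. For such an $F$, Proposition~\ref{simple} ensures that distinct positive solutions of $(S_F)$ project to distinct $t$-values, and Corollary~\ref{boundSF} bounds the number of positive solutions of $(S_F)$ by $\tfrac{e^2+3}{4}\,2^{\binom{\ell-1}{2}}(\dim F+1)^{\ell-1}\leq \tfrac{e^2+3}{4}\,2^{\binom{\ell-1}{2}}(m+1)^{\ell-1}$, where $\ell=\codim\A_F$. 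Bounding the number of non-defective faces of codimension $\ell$ via Proposition~\ref{prop:number_nd_faces} applied to the basis $\A'$ (whose dimension is $m$), summing over $\ell\in\{1,\dots,k\}$, and reindexing with $j=\ell-1$, yields
\[
\tfrac{N}{2}\leq \frac{e^2+3}{8(m+k+2)}\sum_{j=0}^{k-1}\binom{m+k+2}{k-j}\,2^{\binom{j}{2}}(m+1)^{j},
\]
which combined with $(k+1)$ gives the first inequality of the theorem.

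Finally, for the simpler bound, I would estimate the sum crudely, observing that the dominant term corresponds to $j=k-1$ and contributes, after cancellation of $(m+k+2)$ from the denominator against $\binom{m+k+2}{1}$, a quantity of order $(m+1)^{k-1}2^{\binom{k-1}{2}}$; the lower-order terms in $j$ and the additive $1+k$ are then absorbed into the constant $8$ via standard inequalities. The main technical obstacle in the plan is neither any individual step nor the final estimate, but rather verifying simultaneously all the genericity conditions (Assumption~\ref{A:comb} for $\pm h$, the hypotheses of Propositions~\ref{simple} and~\ref{Jens}, and meeting $\nabla$ only at smooth points as in Lemma~\ref{lem:intersection_nablas}); each condition excludes a definable subset of positive codimension, so a generic $(c,h)$ fulfils them all, but the bookkeeping must be done carefully to preserve the homeomorphism type of $V_{>0}(f)$ throughout.
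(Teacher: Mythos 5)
Your proposal follows essentially the same route as the paper: embed $f$ in a Viro family, bound $b_0$ at the two extremes of $t$ by combinatorial patchworking (Theorem~\ref{patchwork}), control intermediate changes via crossings with $\nabla$ (Propositions~\ref{isotopic} and~\ref{Jens}), reduce to non-defective non-pyramidal faces of the basis $\A'$, and combine Corollary~\ref{boundSF} with Proposition~\ref{prop:number_nd_faces}, with the two-sided "$N_1/N_2$" argument producing exactly the factor $\tfrac12$ that the paper encodes in inequality~\eqref{T+patchwork}. The only place where your write-up is notably thinner than the paper's is the passage from the explicit sum to the clean bound $8(m+1)^{k-1}2^{\binom{k-1}{2}}$: "standard inequalities" hides a real (if elementary) computation in which the paper factors out $(m+1)^{k-1}2^{\binom{k-1}{2}}$, bounds the residual product by $\tfrac52$ and the residual sum by $\sum_{i\ge 1}1/i!\le e-1$, and then absorbs $k+1$; also, your appeal to the "distinct $t$-values" part of Proposition~\ref{simple} is unnecessary, since only the one-sided inequality $\lvert T_F\rvert\le$ (number of positive solutions of~\eqref{S_F}) is used.
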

\begin{proof}
	Consider a path $(f_{t})_{t \in ]0,+\infty[}=(c_{a}t^{h_{a}})_{t \in ]0,+\infty[}$. 
	We might choose $h$ compatible with \(\A\) and generic enough so that any face \(F\) of \(\Ah = \{(a,h_a) \mid a \in\A\}\) is a simplex with set of vertices \(\Ah \cap F\). This gives two polyhedral triangulations of $Q$ obtained by projecting the lower part and the upper part of the convex hull of $\Ah$. 
	We might furthermore perturb slightly the coefficients $c_{a}$ so that the path \(f_t\) intersects \(\nabla\) only at smooth points (see Lemma~\ref{lem:intersection_nablas}) and that the condition in Proposition \ref{Jens} is always satisfied (see Remark \ref{R:no restrictive}). 
	For any face \(F\) of \(Q\), let us consider the set $T_F$ of $t \in \; ]0,+\infty[$ for which there exists $x \in \R_{>0}^{n}$ such that $(x,t)$ is a solution of~\eqref{S_F}. By Corollary~\ref{cor:pyramidal}, \(T_F\) is empty as soon as \(\AF\) is pyramidal. 
	By Remark~\ref{rem:npfaces}, it is sufficient to consider the faces of \(\A'\) where \(\A'\) is the basis of \(\A\).
	Let \(T = \bigcup_{F\text{ face of }Q'} T_F\) (where $Q'$ is the convex hull of $\A'$).
	Then $T$ is finite and $b_{0}(V_{>0}(f_{t}))$ may only change when $t$ passes through a value $t \in T$, in which case  $b_{0}(V_{>0}(f_{t}))$ increases or decreases by at most $1$ (Propositions \ref{isotopic} and \ref{Jens}). 
	Thus setting $n_{0}=b_{0}(V_{>0}(f_{t}))$ for $0 <t < \min(T)$ and $n_{+\infty}=b_{0}(V_{>0}(f_{t}))$ for $t > \max(T)$, we get	
	\begin{equation}\label{T+patchwork}
		b_{0}(V_{>0}(f) )\leq \frac{|T|+n_{0}+n_{+\infty}}{2}.
	\end{equation}
	From Theorem~\ref{patchwork}, we have $\max(n_{0}, n_{+\infty}) \leq k+1$, hence 
	$b_{0}(V_{>0}(f)) \leq \frac{|T|}{2}+k+1$. 	
	
	For any face \(F\) of dimension \(\nu\) such that \(\AF\) has codimension \(\kappa\), by Corollary~\ref{boundSF}, we know that \(\lvert T_F \rvert \leq \frac{e^2+3}{4} 2^{\binom{\kappa-1}{2}} (\nu+1)^{\kappa-1} \). 
	Then
	\begin{align*}
		\lvert T\rvert & = \sum_{\nu=1}^{m} \sum_{\kappa=1}^{k} \sum_{\stackrel{F\text{ non-def. face of }Q}{\dim(F)=\nu, \codim(F)=\kappa}} \lvert T_F \rvert \\
		& \leq \sum_{\nu=1}^{m} \sum_{\kappa=1}^{k} \sum_{\stackrel{F\text{ non-def. face of }Q}{\dim(F)=\nu, \codim(F)=\kappa}} \frac{e^2+3}{4} 2^{\binom{\kappa-1}{2}} (\nu+1)^{\kappa-1} \\
		& \leq \frac{e^2+3}{4} \sum_{\kappa=1}^{k} \frac{1}{m+k+2}\binom{m+k+2}{k-\kappa+1}  2^{\binom{\kappa-1}{2}} (m+1)^{\kappa-1} \\
		& = \frac{e^2+3}{4} (m+1)^{k-1} \\
		& \quad\quad\quad\quad \sum_{\kappa=1}^{k} \frac{(m+k+1)(m+k)\cdots(m+\kappa+2)}{(k-\kappa+1)! (m+1)^{k-\kappa}}  \left(2^{\binom{k-1}{2} - \binom{k-2}{1} - \cdots - \binom{\kappa-1}{1}}\right) \\
		& = \frac{e^2+3}{4} (m+1)^{k-1} 2^{\binom{k-1}{2}} \sum_{\kappa=1}^{k} \frac{1}{(k-\kappa+1)!} \prod_{j=0}^{k-\kappa-1} \frac{m+\kappa+j+2}{2^{\kappa+j-1} (m+1)} \\
		& \leq \frac{e^2+3}{4} (m+1)^{k-1} 2^{\binom{k-1}{2}} \frac{5}{2} (e-1).
	\end{align*}
	We used the fact that the product \(\prod_{j=0}^{k-\kappa-1} \frac{m+\kappa+j+2}{2^{\kappa+j-1} (m+1)}\) is at most \(5/2\) (reached for \(\kappa=1\), \(k=3\), and \(m=1\)).
	Finally, notice that since \(m,k\ge 1\), \(\frac{e^2+3}{4} (m+1)^{k-1} 2^{\binom{k-1}{2}} \frac{5}{2} e + 2k+2 \le  16 (m+1)^{k-1} 2^{\binom{k-1}{2}}\).
	\end{proof}

\subsection{The codimension two case}

In the codimension two case, if $\rk \mAh =\rk \mA+1$ then the support of the critical system of the associated Viro polynomial has codimension \(1\) and so is a circuit (or a pyramid over a circuit). As sharp bounds are known in this case, we can refine our result.

Let $\A$ be a finite set in $\R^{n}$ such that $\dim \A=n \geq 2$ and $\codim \A=2$. 
Consider any real polynomial $f=\sum_{a\in \A}c_a x^{a}$.
To obtain more precise bounds we will use here non generic height functions.  For $\alpha \in \A$ we will consider the height vector $h^{\alpha} = (h_a)_{a\in \A}$ defined by $h_a=1$ if $a=\alpha$ and $h_a=0$ otherwise. Notice that such function \(h^{\alpha}\) is not compatible (see Definition \ref{de:h}). Indeed, if \(F\) is a face which does not contain \(\alpha\), then \(\rk{\mA_F^h} = \rk{\mAF}\). However, if \(F\) is a non-pyramidal face which contains \(\alpha\), the row vector \((h_a)_{a \in \AF}\) does not lie in the row span of \(\mAF\).
Otherwise, \(\AF\) would be pyramidal over \(\AF\setminus\{\alpha\}\). 
As in the proof of Theorem~\ref{thm_generic_bound}, for any face \(F\) of \(Q\) we denote by $T_F$ the set of $t \in \; ]0,+\infty[$ for which there exists $x \in \R_{>0}^{n}$ such that $(x,t)$ is a solution of~\eqref{S_F} and set \(T = \bigcup_{F\text{ face of } Q'} T_F\).
Set $n_{0}=b_{0}(V_{>0}(f_{t}))$ for $0 <t < \mbox{min} \,  T$ and $n_{+\infty}=b_{0}(V_{>0}(f_{t}))$ for $t > \mbox{max} \, T$.

\begin{lem}\label{bound nonpatchwork}
We have $n_{0} \leq 2$ and $n_{+\infty} \leq 2$.
\end{lem}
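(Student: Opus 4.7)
The plan is to prove $n_{0} \le 2$; the bound $n_{+\infty} \le 2$ will follow by the symmetric argument with $t$ replaced by $1/t$. I would first observe that, since only the $\alpha$-coordinate of $h^{\alpha}$ is nonzero, the Viro polynomial takes the explicit form $f_{t}(x) = g(x) + t\, c_{\alpha} x^{\alpha}$ with $g(x) := \sum_{a \in \A \setminus \{\alpha\}} c_{a} x^{a}$. The support $\A^{\prime} := \A \setminus \{\alpha\}$ has $n+2$ elements and dimension in $\{n-1, n\}$, hence codimension $1$ when $\alpha$ lies in the affine span of $\A^{\prime}$ and $2$ precisely when $\A$ is a pyramid with apex $\alpha$. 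A direct rank computation shows that the first alternative is equivalent to the rank hypothesis $\rk \mAh = \rk \mA + 1$ of Proposition~\ref{non patchwork}.

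In the non-pyramidal sub-case, I would apply Proposition~\ref{non patchwork} verbatim. After a slight perturbation of the coefficients $c_{a}$ --- which would leave $n_{0}$ unchanged, since $V_{>0}(f_{t})$ is smooth for all $t > 0$ outside the finite set $T$ --- this yields $b_{0}(V_{>0}(f_{t})) \le 2$ for all sufficiently small $t > 0$, and hence $n_{0} \le 2$.

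In the pyramidal sub-case, I would use a monomial change of coordinates (see Section~\ref{monomial}) to send $\A^{\prime}$ into the hyperplane $\R^{n-1} \times \{0\}$ and $\alpha$ off of it; after absorbing a further monomial into the variable $x_{n}$, $f_{t}$ takes the form $c_{\alpha} t\, x_{n} + g(x_{1}, \ldots, x_{n-1})$. Solving for $x_{n}$ would then show that $V_{>0}(f_{t})$ projects diffeomorphically onto $\{x^{\prime} \in \R_{>0}^{n-1} : g(x^{\prime})/c_{\alpha} < 0\}$, which is independent of $t$. Corollary~\ref{cor:pyramidal} would moreover give $T = \emptyset$, so $n_{0} = n_{+\infty} = b_{0}(V_{>0}(f))$, reducing the lemma here to bounding by $2$ the number of connected components of $\{g/c_{\alpha} < 0\} \subset \R_{>0}^{n-1}$. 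I would attack this last reduction by an inductive application of the main codimension-$2$ theorem to $g$ (which has codimension $2$ in dimension $n-1$), combined with a chamber-count in $\R_{>0}^{n-1} \setminus V_{>0}(g)$ keeping track of signs. The main obstacle of the whole plan is precisely this pyramidal sub-case, where the inductive bookkeeping of chambers must be carried out carefully; by contrast the non-pyramidal sub-case is a clean application of the already-established Proposition~\ref{non patchwork}.
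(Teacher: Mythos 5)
Your non-pyramidal sub-case is precisely the paper's proof: once $\rk \mAh = \rk \mA + 1$ holds, Proposition~\ref{non patchwork} applies directly (for $t$ small), and replacing $t$ by $1/t$ gives the bound for $t$ large. You also correctly identify that the rank hypothesis is equivalent to $\alpha$ lying in the affine span of $\A\setminus\{\alpha\}$.

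The pyramidal sub-case, however, is a detour you did not need to take. The lemma is invoked only inside the proof of Theorem~\ref{Fewnomial boundbis}, where $\alpha$ is chosen from an equivalence class $u \in \A'/\!\sim$; in particular $\alpha$ lies in the basis $\A'$ of $\A$, hence belongs to some circuit of $\A$. But any element of a circuit is an affine combination of the remaining elements of that circuit, so $\alpha$ lies in the affine span of $\A\setminus\{\alpha\}$ and $\A$ cannot be a pyramid with apex $\alpha$. Thus the rank hypothesis is automatic in every situation where the lemma is applied, and the pyramidal case never occurs. This is worth flagging because your sketch for that case is not only incomplete (you leave the chamber bookkeeping unfinished) but also dangerous: you propose to invoke ``the main codimension-$2$ theorem,'' i.e.\ Theorem~\ref{Fewnomial boundbis} itself, whose proof relies on the present lemma, so one would need a genuine induction on dimension with a carefully isolated base case to avoid circularity. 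The paper sidesteps all of this by keeping $\alpha \in \A'$.

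One small further remark: the perturbation of the $c_a$ required by Proposition~\ref{non patchwork} is already performed at the start of the proof of Theorem~\ref{Fewnomial boundbis}, so by the time the lemma is used, the genericity is in place and no separate justification that $n_0$ is unchanged is needed at the level of the lemma.
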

\begin{proof}
This follows from Proposition~\ref{non patchwork}.
\end{proof}
We are now able to state the main result of this section.

\begin{theo} \label{Fewnomial boundbis}
Let $\A$ be a finite set in $\R^{n}$ of dimension \(d \geq 2\) such that $\codim \A=2$ and its basis \(\A'\) has dimension \(m\). 
Let $u \in \A'/{\sim}$ be any equivalence class (see Subsection \ref{S:smallcodim} for the definition of $\A'/{\sim}$). Then,
for any real polynomial $f=\sum_{a\in \A} c_a x^{a}$, we have
\begin{equation}\label{mainbis}
b_{0}(V_{>0}(f)) \leq \left\lfloor \frac{m-|u|}{2}\right\rfloor+3.
\end{equation}
\end{theo}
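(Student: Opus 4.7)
Fix any $\alpha\in u$ and introduce the non-generic Viro deformation $f_t=\sum_{a\in \A} c_at^{h^\alpha_a}x^a$, where $h^\alpha$ equals $1$ at $\alpha$ and $0$ elsewhere. Since $\A'$ is non-pyramidal, the row $(h^\alpha_a)_{a\in\A}$ does not lie in the row-span of $\mA$, so $\rk \mAh=\rk\mA+1$. After a small perturbation of the $c_a$ preserving the connected component of $\R^{\A}_{\mathrm{gen}}$ that contains $c$, Lemma~\ref{lem:intersection_nablas} and Remark~\ref{R:no restrictive} ensure that the path $(c_at^{h^\alpha_a})_{t>0}$ meets $\nabla$ only at smooth points, at which Proposition~\ref{Jens} applies. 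Propositions~\ref{isotopic} and~\ref{Jens} then yield
\[
b_0(V_{>0}(f))\;\le\;\bigl\lfloor (|T|+n_0+n_\infty)/2\bigr\rfloor,
\]
where $T=\bigcup_{F}T_F$ runs over non-pyramidal faces $F$ of $Q'=\conv(\A')$ (Remark~\ref{rem:npfaces}). Since $\rk\mAh=\rk\mA+1$, Lemma~\ref{bound nonpatchwork} gives $n_0,n_\infty\le 2$, and it suffices to prove $|T|\le m-|u|+2$.

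\textbf{Face-by-face analysis.} Because $h^\alpha_a=0$ for $a\neq\alpha$, the truncation $f_t^F$ is independent of $t$ whenever $\alpha\notin\AF$, so smoothness of $V_{>0}(f)$ together with the perturbation gives $T_F=\emptyset$ for such faces. By Corollary~\ref{cor:pyramidal}, the remaining non-pyramidal faces of $Q'$ to consider are $Q'$ itself and the proper codimension-$1$ faces, which are circuits. For the latter, the Viro critical support $\AhF$ has codimension $0$, so a monomial change of coordinates reduces the critical system to a linear one and $|T_F|\le 1$. By Lemma~\ref{colinear} and Proposition~\ref{number of circuits}, each codimension-$1$ face of $Q'$ corresponds bijectively to an equivalence class $v\in\A'/{\sim}$ with $|v|\ge 2$ that is a coface of $\A'$, and the corresponding face contains $\alpha$ iff $v\ne u$. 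A short case distinction according to whether $u$ is itself a coface shows that the number of codimension-$1$ faces of $Q'$ containing $\alpha$ is at most $N-1$ when $|u|\ge 2$ and at most $N$ when $|u|=1$, where $N$ denotes the number of equivalence classes of size at least $2$.

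\textbf{Sign variation at $Q'$ and conclusion.} For $F=Q'$, the Viro critical support has codimension $1$, so Proposition~\ref{systeme principal} yields $|T_{Q'}|\le \signvar(b_{u_0},\ldots,b_{u_s})$ where $s+1=|\A'/{\sim}|$ and $b$ spans the one-dimensional kernel of $\mAhF$. The key observation is that the $h^\alpha$-row forces $b_\alpha=0$: writing $b=\mB' y$ for a Gale dual $\mB'$ of $\mA'$, the constraint $\langle B_\alpha, y\rangle=0$ combined with the colinearity $B_a=\mu_a B_\alpha$ for $a\in u$ gives $b_a=0$ for every $a\in u$, so $b_u=0$ and the sign-variation count drops to $|T_{Q'}|\le s-1=|\A'/{\sim}|-2$. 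Combining with the face count and the elementary identity
\[
m+3=|\A'|=|\A'/{\sim}|+\sum_{|v|\ge 2}(|v|-1),
\]
which produces $|\A'/{\sim}|+N\le m+5-|u|$ when $|u|\ge 2$ and $\le m+3$ when $|u|=1$, one obtains $|T|\le m-|u|+2$ in both cases, whence the desired bound. The main technical obstacle is the identification $b_u=0$ that drops the sign-variation bound by one; the rest is a careful matroidal bookkeeping between the sign-variation count on $Q'$ and the coface count on the codimension-$1$ strata.
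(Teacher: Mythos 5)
Your proof is correct and follows essentially the same route as the paper's: the choice of the special height function $h^\alpha$, the deformation argument via Propositions~\ref{isotopic} and~\ref{Jens}, the observation that faces avoiding $\alpha$ contribute nothing to $T$ after perturbation, the use of Proposition~\ref{systeme principal} for $T_{Q'}$, and above all the key identification $b_u=0$ that shaves one off the sign-variation count. The only cosmetic differences are bookkeeping: you set up an explicit case distinction on $|u|=1$ versus $|u|\ge 2$ together with the identity $|\A'|=|\A'/{\sim}|+\sum_{|v|\ge 2}(|v|-1)$, whereas the paper packages the same count into the single inequality $N_\alpha+s\le m+3-|u|$; both lead to $|T|\le m-|u|+2$ and hence the stated bound.
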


\begin{proof}
Choose $\alpha \in u$ and consider the height vector $h$ defined by $h_a=1$ if $a=\alpha$ and $h_a=0$ otherwise. 

Let \(Q' = \ch(\A')\).
Consider the path $(f_{t})_{t \in ]0,+\infty[}=(c_{a}t^{h_{a}})_{t \in ]0,+\infty[}$.
By our choice of $h$ we see that if $F$ is a face of $Q'$ which does not contain $\alpha$, then $f^{F}_{t}$ does not depend on $t$ i.e.\ is equal to $f^{F}$. Perturbing slightly the coefficients $c_a$ if necessary, we may assume that $(f_{t})_{t \in ]0,+\infty[}$ intersects \(\nabla\) at smooth points (in order to use Proposition~\ref{Jens}) and that for all faces $F$ of $Q'$ which does not contain $\alpha$ the hypersurface $f^{F}=0$ has no singular point in the positive orthant. If follows that only critical systems~\eqref{S_F} associated to non-defective faces $F$ of $Q'$ containing $\alpha$ can have positive solutions. For such faces, it holds that \(\rk{\mA}_{F}^{h} = \rk{\mAF}+1\).

Recall that for any face \(F\) of \(Q\) we denote by $T_F$ the set of $t \in \; ]0,+\infty[$ for which there exists $x \in \R_{>0}^{n}$ such that $(x,t)$ is a solution of~\eqref{S_F} and that \(T = \bigcup_{F\text{ face of }Q'} T_F\).
Thus $T\setminus T_{Q'}$ consists of the elements $t$ of $T$ for which there exists a circuit $\AF$ being a proper face of $\A'$ (in particular \(\dim \AF < m\)) and such that there exists $x \in \R_{>0}^{n}$ with $(x,t)$ a solution of~\eqref{S_F}.
Then, $|T\setminus T_{Q'}|$ does not exceed the total number $N_{\alpha}$ of circuits $\A_{F}\subset \A'$ such that $\dim \A_{F}\leq m-1$ and $\alpha \in \A_{F}$. By Lemma~\ref{colinear}, the number $N_{\alpha}$ does not exceed the number $d$ of equivalences classes $v \in \A'/{\sim}$ such that $|v| \geq 2$ and $v \neq u$.
Letting $s+1=\lvert \A'/{\sim} \rvert$, we get $|\A'|=m+3 \geq \lvert u\rvert+N_{\alpha}+s$. Thus,
\begin{equation}\label{easy}
N_{\alpha}+s \leq m+3-|u|.
\end{equation}
We have $T_{Q'} \leq \signvar(b_{u_0}, b_{u_1},\ldots,b_{u_{s}})$ by Proposition~\ref{systeme principal}. Thus from the inequality~\eqref{T+patchwork} and Lemma~\ref {bound nonpatchwork} we get
\begin{equation}\label{refined}
b_{0}(V_{>0}(f)) \leq \frac{N_{\alpha}+\signvar(b_{u_0}, b_{u_1},\ldots,b_{u_{s}})}{2}+2.
\end{equation}
It turns out that the term $b_{u}$ in the previous sequence vanishes. Indeed,  recall that $b$ is a vector in $\mbox{Ker} \, \mAh \subset \mbox{Ker} \, A$. Thus $b_{\alpha}=0$ and there exists $\delta$ (a column vector) such that $b=\mB\cdot\, \delta $. It follows that $\mB_{\alpha} \cdot \, \delta=0$. Now, if $\ell \in u$, then $\mB_{\ell}$ and $\mB_{\alpha}$ are colinear, and thus $b_{\ell}=\mB_{\ell}\cdot  \delta=0$. It follows that $b_{u}=\sum_{\ell \in u}b_{\ell}=0$. Therefore, the sequence $(b_{u_0}, b_{u_1},\ldots,b_{u_{s}})$ has at most $s$ non-zero terms. Thus, 
$b_{0}(V_{>0}(f)) \leq \frac{N_{\alpha}+s-1}{2}+2$ and using~\eqref{easy} we get
\[b_{0}(V_{>0}(f)) 
\leq \frac{m-|u|}{2}+3.\qedhere
\]
\end{proof}

Theorem~\ref{Fewnomial boundbis} has the following very simple reformulation using Lemma~\ref{colinear}.

\begin{theo}\label{boundusingcircuits}
If \(\A\) is a finite set of \(\R^n\) of codimension \(2\), then $b_{0}(V_{>0}(f)) \leq \left\lfloor \frac{r-1}{2} \right\rfloor+3$ where 
$r$ is the minimal dimension of a circuit ${\mathcal C} \subset \A$.
\end{theo}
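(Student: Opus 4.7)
The plan is to derive Theorem~\ref{boundusingcircuits} as an immediate consequence of Theorem~\ref{Fewnomial boundbis} by choosing the equivalence class $u \in \A'/{\sim}$ optimally. The heart of the argument is the translation, via Lemma~\ref{colinear}, between circuits of $\A$ and equivalence classes of $\A'/{\sim}$.

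First, I would reduce to the basis. Let $\A'$ denote the basis of $\A$, so that $\A'$ is non-pyramidal of codimension $2$ and some dimension $m \le \dim \A$. Every circuit of $\A$ lies inside $\A'$, since by construction of the basis only elements of $\A'$ can appear with nonzero coefficient in an affine relation; hence the circuits of $\A$ and of $\A'$ coincide, and in particular $r$ equals the minimum dimension of a circuit of $\A'$.

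Second, I would translate this minimum into the $\sim$-world. The equivalence classes of $\A'/{\sim}$ are exactly the rank-$1$ flats of $\MatrAd$, since $i \sim j$ means that $B_i$ and $B_j$ are colinear, and an equivalence class is by definition a maximal such set. By Lemma~\ref{colinear}, the map $u \mapsto \A' \setminus u$ is then a bijection between $\A'/{\sim}$ and the set of circuits of $\A'$. Since $|\A'| = m+3$ and each circuit has codimension $1$, the circuit $\A' \setminus u$ has dimension $(m+3)-|u|-2 = m+1-|u|$. Consequently,
\[
r \;=\; \min_{u \in \A'/{\sim}} \dim(\A' \setminus u) \;=\; m + 1 - \max_{u \in \A'/{\sim}} |u|.
\]

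Finally, I would select an equivalence class $u^*$ of maximum cardinality, so that $m - |u^*| = r - 1$, and apply Theorem~\ref{Fewnomial boundbis} with this choice of $u$, which immediately yields $b_0(V_{>0}(f)) \le \lfloor (r-1)/2 \rfloor + 3$. The only real obstacle is the matroid-theoretic bookkeeping of the middle step, namely verifying that $\sim$-classes correspond bijectively to circuits under $u \mapsto \A'\setminus u$ and that the dimension formula $m+1-|u|$ holds; no further geometric input is required beyond what Theorem~\ref{Fewnomial boundbis} already supplies.
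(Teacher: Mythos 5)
Your argument follows the same route as the paper's: reduce to the basis $\A'$, use Lemma~\ref{colinear} to set up the complementation bijection $u \mapsto \A' \setminus u$ between $\sim$-classes of $\A'$ and circuits, compute $\dim(\A' \setminus u) = m+1-|u|$, choose the class $u^*$ of maximal cardinality (equivalently, the complement of a minimal circuit), and feed it into Theorem~\ref{Fewnomial boundbis}. The paper simply names a minimal-dimension circuit $\mathcal{C}$ and takes $u = \A' \setminus \mathcal{C}$ directly, whereas you phrase the same choice as ``maximize $|u|$''; the two presentations are equivalent, and your bookkeeping $m - |u^*| = r-1$ is correct.

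There is one gap you must close: Theorem~\ref{Fewnomial boundbis} carries the hypothesis $\dim \A = d \geq 2$, while Theorem~\ref{boundusingcircuits} imposes no such restriction, so your invocation of Theorem~\ref{Fewnomial boundbis} is outside its hypotheses when $\dim \A = 1$. In that case $|\A|=4$ (four collinear points), every circuit has dimension $1$ so $r=1$, and the claimed bound is $\lfloor 0/2 \rfloor + 3 = 3$; this holds because after a monomial change of coordinates $f$ is a univariate generalized polynomial with four monomials, hence has at most three positive roots. The paper dispatches this edge case in the first sentence of its proof before applying Theorem~\ref{Fewnomial boundbis}; you need to add the same observation. (A tiny secondary point: the rank-$1$ flats you describe live in the matroid $\mathcal{M}_{\A'}^*$ dual to the basis $\A'$, not in $\MatrAd$ itself, since when $\A$ is pyramidal the latter has loops; Lemma~\ref{colinear} should be applied with $\A'$ in place of $\A$.)
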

\begin{proof}
If \(\dim \A =1\), then \(\lvert \A \rvert =4\), and \(3\) is a bound on the number of positive solutions (which are the connected components). Assume \(\dim \A \geq 2\). Since ${\mathcal C} $ is not a pyramid, we have ${\mathcal C} \subset \A'$. Consider $u=\A' \setminus {\mathcal C} \subset \A'$. Then $u$ is an equivalence class of
$\A^\prime/{\sim}$ by Lemma \ref{colinear}.
Then Theorem \ref{Fewnomial boundbis} yields $b_{0}(V_{>0}(f)) \leq \frac{m-|u|}{2}+3$ and it remains to use that $|u|=
|\A'| -|{\mathcal C}|=m+3-(r+2)$.
\end{proof}

We next show that the bound in Theorem \ref{Fewnomial boundbis} (equivalently in Theorem \ref{boundusingcircuits}) is sharp for $n=2$.

\begin{theo}\label{sharp n=2}
Let $\A \subset \R^{2}$ be a set of at most five points in $\R^2$ which do not belong to a line. Then, for any real polynomial $f=\sum_{a\in \A}c_a x^{a}$, we have $b_{0}(V_{>0}(f)) \leq 3$
Moreover, we have $b_{0}(V_{>0}(f))= 3$ for the polynomial $f(x,y) = 1+ x^{4}-xy^{2}-x^{3}y^{2}+0.76x^{2}y^{3}$ (see Figure~\ref{fig:threeCompo}).
\end{theo}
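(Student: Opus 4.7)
The proof naturally splits into the upper bound $b_{0}(V_{>0}(f))\leq 3$ and the sharpness claim. For the upper bound, I would do a short case analysis on $|\A|\in\{3,4,5\}$ (forced by $\dim\A=2$), each case reducing to a result already proved in the paper. For the sharpness, I would exhibit the three components of $V_{>0}(f)$ for the specific polynomial by a slice analysis.

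For the upper bound, note that $\codim\A=|\A|-3\in\{0,1,2\}$. If $|\A|=3$, then $\A$ is the vertex set of a triangle, $\codim\A=0$, and the warm-up Proposition in the introduction ($k=0$ case) gives that $V_{>0}(f)$ is either empty or connected, so $b_{0}(V_{>0}(f))\leq 1$. If $|\A|=4$, then $\codim\A=1$ and the warm-up ($k=1$ case) gives $b_{0}(V_{>0}(f))\leq 2$; the possibly singular case is handled by a small perturbation of $f$, since $V_{>0}(f_{t})$ can differ from $V_{>0}(f)$ by only passing through at most one extra ordinary double point, which changes $b_{0}$ by at most $1$. If $|\A|=5$, then $\codim\A=2$ and every circuit $\mathcal C\subset \A$ is contained in $\R^{2}$, hence $\dim\mathcal C\leq 2$; the minimal dimension $r$ of a circuit of $\A$ thus satisfies $r\leq 2$, and Theorem~\ref{boundusingcircuits} yields
\[
b_{0}(V_{>0}(f))\leq \left\lfloor\frac{r-1}{2}\right\rfloor+3\leq 3.
\]

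For the sharpness, set $\A=\{(0,0),(4,0),(1,2),(3,2),(2,3)\}$ and $f(x,y)=1+x^{4}-xy^{2}-x^{3}y^{2}+0.76\,x^{2}y^{3}$. A direct check shows that no three points of $\A$ are collinear, so the previous argument already gives $b_{0}(V_{>0}(f))\leq 3$. To verify equality I would slice $V_{>0}(f)$ by the vertical lines $x=x_{0}>0$, reducing to the study of the univariate polynomial $g_{x_{0}}(y)=f(x_{0},y)$ of degree $3$ in $y$. Descartes' rule of signs applied to $g_{x_{0}}$ bounds the number of positive $y$-roots by the sign variations of its coefficient sequence, and this number changes only when $x_{0}$ crosses a positive root of the $y$-discriminant of $f$, which is a univariate polynomial in $x$ that can be computed explicitly and whose positive roots can be isolated (e.g.\ by Sturm sequences). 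Counting positive roots of $g_{x_{0}}$ on each resulting interval, and pinning down the global assembly of the arcs by the asymptotic behaviour as $x\to 0^{+}$, $x\to+\infty$, $y\to 0^{+}$, $y\to+\infty$, reproduces the three-component picture of Figure~\ref{fig:threeCompo}.

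The main obstacle is precisely this last step of the sharpness part: one must ensure that the arcs detected on neighbouring slices really assemble into three distinct connected components (rather than merging), and that no component is lost at a value of $x_{0}$ where the discriminant vanishes. Since the $y$-discriminant of $f$ is an explicit univariate polynomial with only finitely many positive real roots, this is a finite (computer-assisted) verification; the specific value $0.76$ is chosen so that three successive intervals of $x_{0}$ each carry a pair of positive $y$-roots assembling into three separate components, whereas nearby values of this coefficient give only $1$ or $2$ components.
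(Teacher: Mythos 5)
Your upper bound argument is correct and follows exactly the route the paper intends: the case $|\A|=5$ (codimension~$2$) is the application of Theorem~\ref{boundusingcircuits}, and since every circuit in $\R^2$ has dimension $1$ or $2$, one gets $\lfloor(r-1)/2\rfloor + 3 = 3$ in either case; the cases $|\A|\in\{3,4\}$ have smaller codimension and smaller bounds. (One small caution: for $|\A|=4$ your invocation of the warm-up proposition requires $V_{>0}(f)$ to be smooth, so your handling of the singular case by a one-node perturbation is necessary; it gives $\leq 3$, which is all that is needed here, but note that Theorems~\ref{Fewnomial boundbis} and~\ref{boundusingcircuits} themselves are stated without any smoothness hypothesis, so for $|\A|=5$ you do not need such a side argument.) For the sharpness, your slice-by-slice outline is the right one, and since $f(x,y)=0.76\,x^2y^3 - (x+x^3)y^2 + (1+x^4)$ has sign pattern $(+,-,0,+)$ in $y$ for each $x>0$, Descartes already shows every vertical slice meets the curve in $0$ or $2$ positive points, so the three ovals must have disjoint $x$-projections and the verification reduces to locating the six positive roots of the $y$-discriminant (a degree-$6$ polynomial in $x$ after removing trivial factors) and checking the root count on the intervening intervals. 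You correctly flag this as the remaining obstacle. The paper itself provides no written proof of this theorem — only Figure~\ref{fig:threeCompo} — so your proposal is at the same level of completeness as the published text: the upper bound is rigorous, and the sharpness claim ultimately rests on a finite computation that neither you nor the paper actually spells out.
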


\begin{figure}[ht]
	\centering
	\includegraphics[scale=0.2]{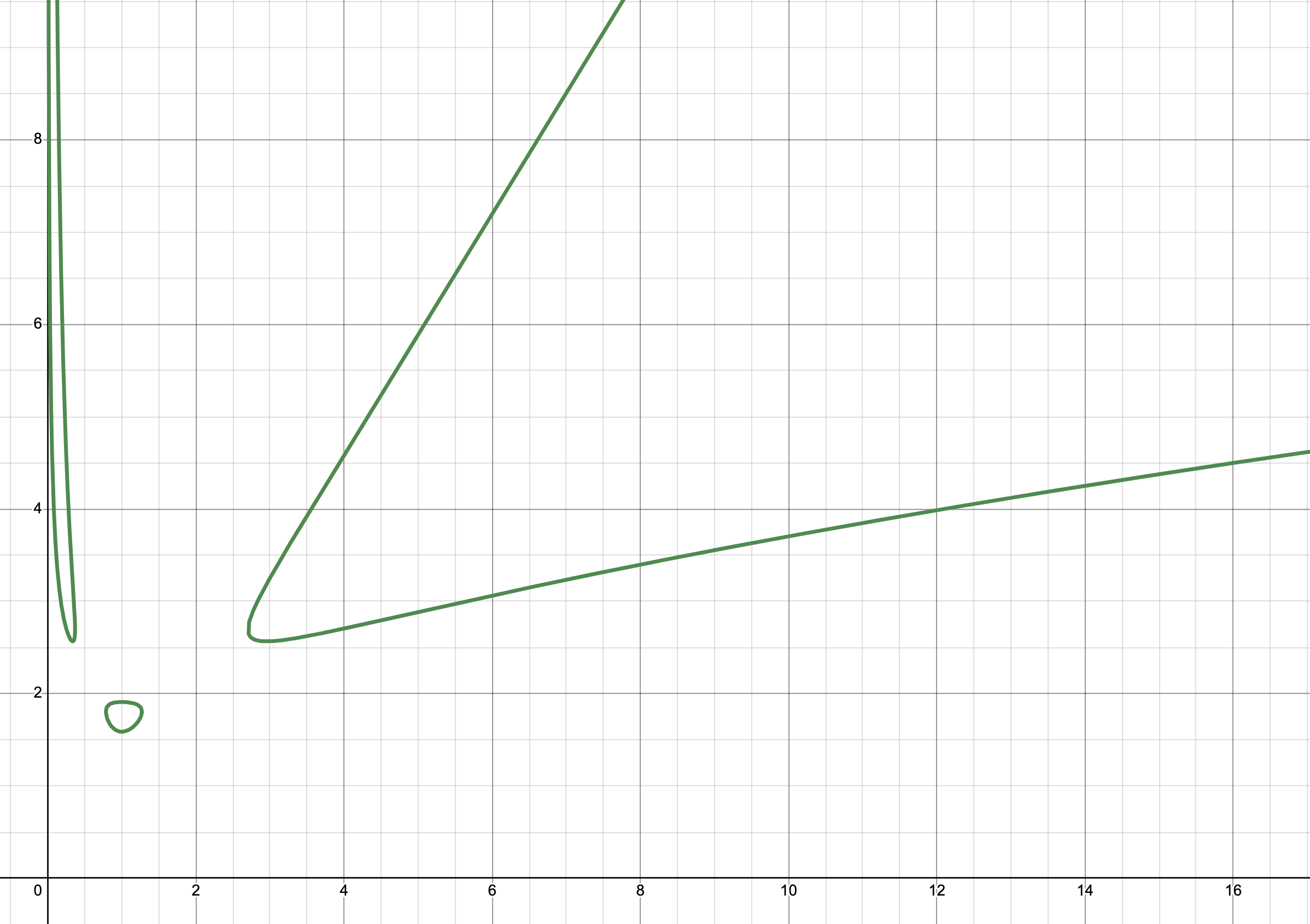}
	\label{fig:threeCompo}
	\caption{The positive zero set of \(f = 1+ x^{4}-xy^{2}-x^{3}y^{2}+0.76x^{2}y^{3}\) has three connected components.}
\end{figure}

\begin{rem}
The assumption that the points do not belong to a line cannot be dropped: the curve defined by the polynomial $f(x_{1},x_{2})=
(x_1-1)(x_1-2)(x_1-3)(x_1-4)$ has $4$ connected components in the positive orthant.
\end{rem}

\begin{rem}
By Theorem \ref{patchwork}, the curve defined by the polynomial $f(x,y) = 1+ x^{4}-xy^{2}-x^{3}y^{2}+0.76x^{2}y^{3}$ in the positive orthant cannot be obtained  by the combinatorial patchworking construction.
\end{rem}

No datasets were generated or analysed during the current study.
\bibliographystyle{plain}
\bibliography{DescartesMultivariate} 

\appendix

\section{Monomial change of coordinates}
 \label{monomial}

For reader's convenience we recall well-known results about monomial maps, more specifically, monomial change of coordinates.
 
 A {\em monomial change of coordinates} is a map $\varphi_{L} : x= (x_{1},\ldots,x_{n}) \mapsto y=(y_{1},\ldots,y_{n)}$ where $y_i=\prod_{j=1}^{n}x_{j}^{L_{ij}}$ for $i=1,\ldots,n$ and $L=(L_{ij}) \in \textrm{GL}_n(\R)$. This provides a diffeomorphism $\varphi_{L} : (\R_{>0})^{n} \rightarrow (\R_{>0})^{n}$. Moreover $\varphi_{L}^{-1} = \varphi_{L^{-1}}$. An easy way to see that is to conjugate $\varphi_{L}$ by the logarithm map $\Log: (\R_{>0})^{n} \rightarrow \R^{n}$ (which is a diffeomorphism) and observe that $\Log(y)=L \cdot \Log(x)$ (here and after $y=\varphi_{L}(x)$ and the vectors are seen as column vectors).
 It follows that for any $a \in \R^n$ we have $x^a= \prod_{i=1}^n \prod_{j=1}^n y_j^{L_{ij}^{-1} a_i} = y^{\ell(a)}$ where $\ell$ is the linear map associated to the transpose of $L^{-1}$, that is,  $\ell(a)=(L^{T})^{-1} \cdot a$.

 Then, if $f(x)=\sum_{a \in \A} c_{a} x^{a} \in \R[x]$, we get that $f=g \circ \varphi_L$, where $g$ is the polynomial $g(y)=\sum_{a \in \A} c_a y^{\ell(a)}$.
  
 \begin{lem}
  \label{change}
 Consider a polynomial $f(x)=\sum_{a \in \A}c_{a} x^{a}$ in $n$ variables $x=(x_{1},\ldots,x_{n})$.
Let $\varphi_L: (\R_{>0})^{n} \rightarrow (\R_{>0})^{n}$ be the monomial map
given by  $y_i=\prod_{j=1}^{n}x_{j}^{L_{ij}}$ for $i=1,\ldots,n$, and $L=(L_{ij}) \in \textrm{GL}_n(\R)$. Set $g(y)=\sum_{a \in \A}c_{a} y^{\ell(a)+\tau}$ where $\ell:\R^{n} \rightarrow \R^n$
is the linear map associated to $(L^{T})^{-1}$ and \(\tau \in \R^n\). 
Then the map \(x \mapsto \varphi_L(x)\) is a diffeomorphism which sends the set \(V_{>0}(f,x_1\frac{\partial f}{\partial x_1}, \ldots, x_n\frac{\partial f}{\partial x_n})\) onto $V_{>0}(g, y_1\frac{\partial g}{\partial y_1}, \ldots, y_n\frac{\partial g}{\partial y_n})$. 
\end{lem}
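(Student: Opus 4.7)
The plan is to reduce the statement to two elementary facts: (i) multiplying a polynomial by a monomial does not change its critical locus in the positive orthant, and (ii) a monomial change of coordinates pulls back the logarithmic derivatives in a particularly clean way, namely by the transpose of the matrix $L$.

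First I would separate the shift by $\tau$ from the monomial change of variables. Set $h(y) = \sum_{a \in \A} c_a y^{\ell(a)}$, so that $g(y) = y^{\tau} h(y)$. Because $y^{\tau} > 0$ on $(\R_{>0})^n$, we have $V_{>0}(g) = V_{>0}(h)$. Moreover, a direct computation gives
\[
y_i \frac{\partial g}{\partial y_i} = y^{\tau}\!\left( \tau_i\, h + y_i \frac{\partial h}{\partial y_i}\right),
\]
so on the common positive zero set $\{h=0\}=\{g=0\}$, the equations $y_i\partial g/\partial y_i=0$ are equivalent to $y_i\partial h/\partial y_i=0$. Hence in $(\R_{>0})^n$ the critical systems for $g$ and for $h$ cut out the same set.

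Next I would compare $f$ with $h\circ\varphi_L$. The identity $x^a = y^{\ell(a)}$ recalled just before the lemma gives $f(x) = h(\varphi_L(x))$, so in particular $\{f=0\}$ in $(\R_{>0})^n$ corresponds bijectively under $\varphi_L$ to $\{h=0\}$. For the logarithmic derivatives, I apply the chain rule: since $y_i = \prod_k x_k^{L_{ik}}$, we have $x_j\,\partial y_i/\partial x_j = L_{ij}\, y_i$, and therefore
\[
x_j \frac{\partial f}{\partial x_j}(x) \;=\; \sum_{i=1}^n L_{ij}\, y_i \frac{\partial h}{\partial y_i}(y) \qquad (y=\varphi_L(x)).
\]
This is the matrix identity $\bigl(x_j\partial f/\partial x_j\bigr)_j = L^{T}\cdot \bigl(y_i\partial h/\partial y_i\bigr)_i$. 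Since $L$ (hence $L^T$) is invertible, the vanishing of all $x_j\partial f/\partial x_j$ at $x$ is equivalent to the vanishing of all $y_i\partial h/\partial y_i$ at $y=\varphi_L(x)$.

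Combining the two reductions, $x \in V_{>0}\!\left(f,\, x_1\partial f/\partial x_1,\ldots, x_n\partial f/\partial x_n\right)$ if and only if $y=\varphi_L(x) \in V_{>0}\!\left(h,\, y_1\partial h/\partial y_1,\ldots, y_n\partial h/\partial y_n\right)$, which in turn equals $V_{>0}\!\left(g,\, y_1\partial g/\partial y_1,\ldots, y_n\partial g/\partial y_n\right)$. The fact that $\varphi_L$ is a diffeomorphism of $(\R_{>0})^n$ has already been explained in the surrounding text (by conjugation with $\Log$), so no further work is needed there. There is no real obstacle: everything is a two-line calculation, the only point to be careful about is keeping track of which matrix ($L$ vs.\ $(L^T)^{-1}$) acts on which object, which is already fixed by the definitions $y_i=\prod_j x_j^{L_{ij}}$ and $\ell(a)=(L^T)^{-1}a$.
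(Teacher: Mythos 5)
Your proof is correct and follows essentially the same route as the paper's: it disposes of the shift $\tau$ by the multiply-by-a-monomial observation (Remark~\ref{translation}) and then identifies $f$ with $g\circ\varphi_L$, noting that the logarithmic derivatives transform by the invertible matrix $L^{T}$. You merely make explicit the chain-rule computation that the paper's proof only sketches, so there is nothing to add or correct.
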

\begin{proof}
We can assume that \(\tau=0\) (see Remark~\ref{translation}).
By construction of \(\varphi_L\) and \(\ell\), we have  $f(x)=0$ if and only if $g(\varphi_L(x))=0$. It remains to see that the first partial derivatives with respect to $y$ correspond via logarithmic change of coordinates (at the source and at the target, see the discussion above) to directional derivatives in the variables $x$ along a basis of $\R^{n}$ determined by $L$.
\end{proof} 

 \begin{rem}\label{affine}
	Any affine transformation $\psi: \R^{n} \rightarrow \R^{n}$ corresponds to an invertible matrix $T_{\psi} \in \R^{(n+1) \times (n+1)}$ with first row $(1,0,\ldots,0)$ via the rule
	$$
	T_{\psi} \cdot \left(\begin{array}{c}
		1 \\
		a
	\end{array}\right)=
	\left(\begin{array}{c}
		1 \\
		\psi(a)
	\end{array}\right)
	.$$
	The first column of $T_{\psi}$ gives the translation vector of $\psi$ while the lower right matrix in $\R^{n \times n}$ is the matrix associated to its linear part.
\end{rem}

\section{Useful lemma and notations}\label{sec:App_prelim}
We first give some notations. Let $\textrm{M}$ be any square matrix. We will denote by \(\lvert \textrm{M}\rvert \) or by $\det(M)$ its determinant.
Moreover \(\textrm{M}_{I}^J\) is the submatrix from \(\mathrm{M}\) we get by keeping the rows from \(I\) and the columns from \(J\). We similarly use \(\mathrm{M}_I\) and \(\mathrm{M}^{J}\). Also \(\mathrm{M}_{\setminus i}\) and \(\mathrm{M}^{\setminus j}\) means that we remove respectively the \(i\)th row and the \(j\)th column from \(\mathrm{M}\). We will use the notation $\Diag_{u \in [1,b]}(v_u)$ for the diagonal matrix with diagonal coefficients $v_1,\ldots,v_b$.
The transpose of a matrix $A$ will be often denoted by $A^{T}$

    We start with an easy lemma which will be used several times in the proofs below.

\begin{lem}\label{lem_sum_sigma}
	Let \(\mA\) and \(\mC\) be two matrices of \(\R^{a\times b}\) with \(a \leq b\). Let \(v\) be a vector in \(\ker(\mC) \subseteq \R^b\). Assume that the row of \(1\)'s is in the row-span of \(\mA\). Then
	\begin{align*}
		\sum_{\substack{\sigma \subseteq [1,b] \\ \lvert \sigma \rvert =a}} \det(\mA^{\sigma}) \det(\mC^{\sigma}) \prod_{u \in \sigma} v_u = 0.
	\end{align*}
\end{lem}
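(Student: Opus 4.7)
The plan is to recognize the given sum as the Cauchy--Binet expansion of a single determinant, and then to exhibit an explicit linear relation among the rows of that matrix showing it vanishes.

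Concretely, I would introduce the diagonal matrix $V = \Diag_{u \in [1,b]}(v_u)$ and consider the $a \times a$ product $M = \mA V \mC^T$. Since the $u$-th column of $\mA V$ is simply $v_u$ times the $u$-th column of $\mA$, pulling these scalars out column-by-column when computing each maximal minor of $\mA V$ gives $\det((\mA V)^\sigma) = \bigl(\prod_{u \in \sigma} v_u\bigr) \det(\mA^\sigma)$. Applying the Cauchy--Binet formula to the factorization $M = (\mA V) \cdot \mC^T$ then yields
\[
\det(M) = \sum_{\substack{\sigma \subseteq [1,b] \\ |\sigma|=a}} \det(\mA^\sigma) \det(\mC^\sigma) \prod_{u \in \sigma} v_u,
\]
so the statement reduces to showing $\det(M) = 0$.

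For this final step I would combine the two hypotheses. Because the row of $1$'s lies in the row-span of $\mA$, there exists a nonzero row vector $r \in \R^a$ with $r \mA = (1, \ldots, 1)$. Then
\[
r M = r \mA V \mC^T = (1, \ldots, 1)\, V\, \mC^T = v^T \mC^T = (\mC v)^T = 0,
\]
the last equality holding since $v \in \ker \mC$. As $r \neq 0$, this produces a nontrivial left linear dependence among the rows of the square matrix $M$, forcing $\det(M) = 0$ and completing the proof.

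There is no real obstacle here: the identity is essentially a Cauchy--Binet repackaging of the elementary observation that $\mA V \mC^T$ admits a nonzero left null vector obtained by combining the two hypotheses on $\mA$ and $\mC$. The only point requiring care is the multilinearity computation that produces the factor $\prod_{u \in \sigma} v_u$ in each minor, which is routine.
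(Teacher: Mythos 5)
Your proof is correct and matches the paper's argument: both recognize the sum as a Cauchy--Binet expansion of a determinant of a rank-$a$ product (you write $\mA V \mC^{T}$, the paper writes $\mC V \mA^{T}$, which is just the transpose) and then produce a null vector for it by pushing the row of $1$'s through $V$ to get $v \in \ker \mC$. The two write-ups are transposes of each other but identical in substance.
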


\begin{proof}
	By Cauchy-Binet Formula, the sum equals 
	\begin{align*}
		\det(\mC \cdot \Diag_{u \in [1,b]}(v_u) \cdot \mA^{T} ).
	\end{align*}
Since there exists a non-zero column vector \(\rho \in \R^a\) such that \(\mA^{T} \cdot \rho\) is the column of \(1\)'s, it implies that \(\Diag_{u \in [1,b]}(v_u) \cdot \mA^{T} \cdot \rho =v\). In particular \(\rho\) lies in the kernel of \(\mC \cdot \Diag_{u \in [1,b]}(v_u) \cdot \mA^{T}\). 
\end{proof}

\section{Proof of Lemma~\ref{lem:intersection_nablas}}\label{sec:proof_intersection_nablas}

Let \(F_1\) and \(F_2\) be two distinct faces of Q. 
We keep notations introduced in Section~\ref{sec:App_prelim}. It is sufficient to prove that
${\overset{\frown}{\nabla}}_{\A_{F_1}} \cap {\overset{\frown}{\nabla}}_{\A_{F_2}}$ has codimension at least two in \(\R^{\A}\), where
${\overset{\frown}{\nabla}}_{\A_{F_i}}=\{c \in (\R^*)^{\A} \mid (c_{a})_{a \in \A_{F_i}} \in \accentset{\circ} \nabla_{\A_{F_i}}\}$.
Denote by \(d_1\) and \(d_2\) the dimensions of $F_1$ and $F_2$, respectively. Moreover, set
\(s_1 = \lvert \A_{F_1}\rvert\) and \(s_2 = \lvert \A_{F_2}\rvert\).
Assume without loss of generality that $F_i$ is contained in a coordinate subspace of dimension $d_i$ for $i=1,2$
(applying an affine transformation to $\A$ if necessary).
Then the matrix \(\mA_{F_i}\) has $n-d_i$ zero rows. Denote by $\mA_i$ the matrix of size \((d_i+1)\times s_i\) and rank \(d_i+1\) obtained by removing the zero rows of \(\mA_{F_i}\).
From Horn-Kapranov uniformization, we know that \(\accentset{\circ}{\nabla}_{\A_{F_1}}\) is parametrized by \(\Phi_{\A_{F_1}}\) over \((\mathbb{R}_{>0})^{d_1} \times \mathbb{R}^{s_1-d_1-1}\).
Notice that if the set \(\A_{F_1}\) is defective, then by definition \(\accentset{\circ}{\nabla}_{\A_{F_1}}\), and so \(\overset{\frown}{\nabla}_{\A_{F_1}}\), have codimension at least two and the result is clear. So we assume in the following that \(\A_{F_1}\) is not defective. By~\cite{For19}, this implies that the cuspidal form \(P_{\A_{F_1}}\) is not identically zero. In particular, the image by \(\Phi_{\A_{F_1}}\) of \((\mathbb{R}_{>0})^{d_1} \times \{z \in \mathbb{R}^{s_1-d_1-1} \mid P_{\A_{F_1}}(z) = 0\}\) has codimension at least two in \(\R^{\A_{F_1}}\).
Thus it is enough to prove that the set 
$$
\mathcal{S} = (\R^*)^{\A} \cap
\overset{\frown}{\nabla}_{\A_{F_1}} \cap
\overset{\frown}{\nabla}_{\A_{F_2}}\setminus \{ c \mid \exists x,z, \ \Phi_{\A_{F_1}}(x,z)=c_{\mid \A_{F_1}} \mbox{ and } P_{\A_{F_1}}(z) = 0 \}
$$ has codimension at least two in \(\R^{\A}\) (where we use the notation \(c_{\mid \A_{F_1}}=(c_a)_{a \in \A_{F_1}}\)).
Consider the block-diagonal matrix of size \((d_1+d_2+2)\times(s_1+s_2)\)
\[
	\tilde{\mA}= \begin{pmatrix} \mA_1 & 0 \\ 0 & \mA_2\end{pmatrix}.
\]
Since \(\rk(\mA_i)=d_i+1\) we have \(\rk(\tilde{\mA}) = d_1+d_2+2\). 
Let \(\mathcal{W}_2 \subseteq \A_{F_2}\) of size \(d_2+1\) such that the maximal minor of \(\mA_2\) corresponding to \(\mathcal{W}_2\) is invertible.

Consider the map $\psi:(\R^*)^{ \A}\times(\R_{>0})^{d_1}\times(\R_{>0})^{d_2} \rightarrow \R^{d_1+d_2+2}$ defined by
$$
\psi(c,x,y)=\tilde{\mA} \cdot \left(\begin{array}{c} u_1 \\ u_2 \end{array} \right)=\left(\begin{array}{c} \mA_1 u_1 \\ \mA_2 u_2 \end{array} \right),$$ where $u_1= (c_a x^a)_{a \in \A_{F_1}}$ and  $u_2= (c_a y^a)_{a \in \A_{F_2}}$ are written as column vectors.
We have \(\psi(c,x,y)=0\) if and only if the hypersurface defined by the polynomial with coefficients $(c_a)_{a \in \A_{F_1}}$
has a singularity at $x$ and the hypersurface defined by the polynomial with coefficients $(c_a)_{a \in \A_{F_2}}$
has a singularity at $y$. Therefore, the image of 
\(\psi^{-1}(0)\) by the projection onto the factor \((\R^*)^{\A}\) equals \((\R^*)^{\A} \cap
{\overset{\frown}{\nabla}}_{\A_{F_1}} \cap
{\overset{\frown}{\nabla}}_{\A_{F_2}}\).
Hence, the result follows from the following lemma.

\begin{lem}
If \((c,x,y) \in \psi^{-1}(0)\) and $c \in \mathcal{S}$ then the dimension of \(\psi^{-1}(0)\) at \((c,x,y)\) is at most \(\lvert \A\rvert -2\).
\end{lem}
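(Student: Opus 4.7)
My plan is to derive this dimension bound from the implicit function theorem applied to $\psi$. The domain of $\psi$ has dimension $|\A|+d_1+d_2$ and its codomain has dimension $d_1+d_2+2$, so it suffices to prove that the differential $d\psi(c,x,y)$ is surjective at every $(c,x,y) \in \psi^{-1}(0)$ with $c \in \mathcal{S}$; surjectivity then gives $\psi^{-1}(0)$ the local structure of a submanifold of dimension $(|\A|+d_1+d_2)-(d_1+d_2+2)=|\A|-2$ near $(c,x,y)$, which is the claimed bound.

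To analyze the cokernel of $d\psi$, I compute $\partial_{c_a}\psi=(\chi_1(a)\,A_1(a)\,x^a,\;\chi_2(a)\,A_2(a)\,y^a)$, where $\chi_i$ is the indicator of $\A_{F_i}$ and $A_i(a)$ denotes the column of $\mA_i$ indexed by $a$; the $x_i$- and $y_j$-derivatives act only on the first, resp.\ second, block. Setting $w(a):=\mu^{T}A_1(a)$ and $v(a):=\nu^{T}A_2(a)$, a non-zero covector $(\mu,\nu)\in \R^{d_1+1}\oplus\R^{d_2+1}$ in the cokernel corresponds to a pair of affine forms $(w,v)$ on $F_1, F_2$ satisfying: (a) $w\equiv 0$ on $\A_{F_1}\setminus \A_F$ and $v\equiv 0$ on $\A_{F_2}\setminus \A_F$, where $F:=F_1\cap F_2$; (b) the coupling $w(a)\,x^a+v(a)\,y^a=0$ on $\A_F$; and (c) $\sum_{a\in \A_{F_1}}w(a)\,c_a\,a_i\,x^a=0$ for every coordinate $i$ of $F_1$, together with the symmetric identity for $v$, both obtained after invoking the singular-point identities encoded by $\psi(c,x,y)=0$.

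The crux, and the \emph{main obstacle}, is that system (c) for $w$ is governed by the cuspidal form of~\cite{For19}. Writing $w(a)=\gamma+\sum_j \alpha_j a_j$ and using the singular identity $\sum_a c_a a_i x^a=0$ to kill the constant part, system (c) reduces to $M\alpha=0$, where $M:=\widehat{\mA}_1\,\Diag(c_a x^a)\,\widehat{\mA}_1^{T}$. By Cauchy--Binet together with the Horn--Kapranov identity $c_a x^a=\langle B_{1,a},z\rangle$, one computes $\det M=P_{\A_{F_1}}(z)$, the cuspidal form evaluated at the Horn--Kapranov parameter of $c_{\mid \A_{F_1}}$. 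Since $c\in\mathcal{S}$ precisely excludes the vanishing locus $\{P_{\A_{F_1}}=0\}$, the matrix $M$ is invertible, $\alpha=0$, and $w$ must be constant. Condition (a) then forces $w\equiv 0$ unless $\A_{F_1}\subseteq \A_{F_2}$; in that remaining edge case, the analogous analysis applied to $v$ (valid after intersecting $\mathcal{S}$ with the complement of the codimension-two cuspidal locus of $F_2$, which has no effect on the codimension-two conclusion of the lemma) forces $v$ constant as well, and then (a)--(b) together yield $(w,v)=(0,0)$, completing the surjectivity check.
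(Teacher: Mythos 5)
Your proof is correct in its essential content and proves the same thing --- surjectivity of $d\psi(c,x,y)$, from which the dimension bound follows by the regular-value/implicit-function theorem --- but it takes the \emph{dual} route from the paper's. Where the paper exhibits one concrete $(d_1+d_2+2)\times(d_1+d_2+2)$ minor (columns for $c_{\overline{a}}$, the $d_1$ toric $x$-derivatives, and the $c_a$-derivatives for a spanning set $\mathcal{W}_2\subset \A_{F_2}$) and, via Cauchy--Binet and Laplace expansions, factors it as $(\det\mA_2^{\mathcal{W}_2})\cdot x^{\overline{a}}\prod_{a\in\mathcal{W}_2}y^a\cdot P_{\A_{F_1}}(z)\neq 0$, you instead analyze the full cokernel: a covector $(\mu,\nu)$ is reinterpreted as a pair of affine forms $(w,v)$ on $\A_{F_1},\A_{F_2}$, and the $x$-derivative equations reduce, after using $\sum_a c_a a_i x^a=0$, to $M\alpha=0$ with $M=\hat{\mA}_1\Diag(c_ax^a)\hat{\mA}_1^T$ whose determinant is again $P_{\A_{F_1}}(z)$ by the same Cauchy--Binet computation. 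The cuspidal-form identity is thus the engine in both; your version isolates it as a Gram-determinant nonvanishing, which is arguably the more transparent explanation of \emph{why} the minor the paper chooses is nonzero.

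One wrinkle. Your treatment of the case $\A_{F_1}\subseteq\A_{F_2}$ --- further shrinking $\mathcal{S}$ by also removing the cuspidal locus of $F_2$ --- proves a formally weaker statement than the lemma as written (though you correctly observe this does not hurt the parent codimension-two bound). The paper sidesteps the issue more cleanly: since $F_1\neq F_2$, one may choose the labeling at the outset (``by symmetry'') so that $F_1$ contains a vertex $\overline{a}\notin F_2$, and only then define $\mathcal{S}$ with the cuspidal locus of $F_1$; your proof could adopt the same convention, after which $\A_{F_1}\setminus\A_{F_2}\neq\emptyset$, condition (a) immediately gives $w\equiv 0$, hence $\mu=0$, and then (a)+(b) force $\nu=0$ without ever invoking $P_{\A_{F_2}}$. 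With that small adjustment your argument proves exactly the stated lemma and needs only one cuspidal-form hypothesis, matching the paper.
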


\begin{proof}
Let \((c,x,y)\in \psi^{-1}(0)\) be such that $c \in \mathcal{S}$. Since \(\psi(c,x,y)=0\), we have \(\mA_{1} \cdot \left(c_ax^a\right)_{a \in \A_{F_1}} =0\), thus
there exists \(z \in \R^{s_1-d_1-1}\) such that \(\Phi_{\A_{F_1}}(x,z)=c_{\mid \A_{F_1}}\) (and \(\mB_1 \cdot z = \left(c_a x^a\right)_{a \in \A_{F_1}}\),
where \(\mB_1\) is some Gale dual matrix of \(\mA_1\)). By assumption, \(P_{\A_{F_1}}(z) \neq 0\).

We show that the Jacobian matrix of \(\psi\) at \((c,x,y)\) has full rank by exhibiting a full rank \((d_1+d_2+2)\)-minor. 
By symmetry, we might assume that \(F_1\) contains a vertex \(\overline{a}\) which is not in \(F_2\).
Consider the minor obtained by selecting the columns associated to the following derivatives: the one with respect to \(c_{\overline{a}}\), the \(d_1\) toric derivatives with respect to \(x\), and the \(d_2+1\) derivatives with respect to \((c_a)_{a \in \mathcal{W}_2}\). Denote by \(\mathcal{D}\) the determinant of this minor. 
We get
\begin{align*}
	\mathcal{D} & = \det\left( \tilde{\mA} \cdot  
	\begin{bmatrix} 
		\begin{matrix} 
			x^{\overline{a}} \\ 0 \\ \vdots \\ 0
		\end{matrix} 
		&
		\begin{matrix}
			\Diag_{a \in \A_{F_1}}(c_a x^a ) \cdot 
			(\hat{\mA}_1)^{T}
		\end{matrix}  
		& 
		\begin{matrix}
			(\Diag_{a \in \A_{F_1}}(x^a))^{\mathcal{W}_2} 
		\end{matrix}
		\\
		\hline \\
		\bigzero & \bigzero &
		\begin{matrix}
			(\Diag_{a \in \A_{F_2}}(x^a))^{\mathcal{W}_2} 
		\end{matrix} 
	\end{bmatrix}\right) \\	
\end{align*}
where $\hat{\mA}_1$ is the matrix $\mA_1$ without its first row of one's.
Then, by writing $\mathcal{D} = \det\left(\tilde{\mA} \cdot \mR\right)$ and applying Cauchy Binet Formula we obtain
$\mathcal{D}=\sum_{|I|=d_1+d_2+2} \det\left( \tilde{\mA}^I \right) \cdot \det\left(\mR_I \right)$.
The block diagonal structure of \(\tilde{\mA}\) implies that the sum can be restricted to sets $I$ containing \(d_1+1\) elements of \(\A_{F_1}\) and \(d_2+1\) elements of \(\A_{F_2}\), while the structure of the matrix $\mR$ implies that the sum can be restricted to sets $I$ containing \(\overline{a} \in \A_{F_1}\) and \(\mathcal{W}_2 \subseteq \A_{F_2}\). Thus we get

\begin{align*}
	\mathcal{D} & =  \sum_{\substack{\sigma \subseteq \mathcal{A}_{F_1}\setminus\{\overline{a}\} \\ \lvert \sigma \rvert = d_1}}
	(\det{\mA_1^{\{\overline{a}\} \mathbin\Vert \sigma }}) (\det{\mA_2^{\mathcal{W}_2}})
	(\det(\hat{\mA}_1)^{\sigma}) x^{\overline{a}}
	\prod_{a \in \sigma} c_a x^a \prod_{a \in \mathcal{W}_2} y^a	\\
	&= 
	\det(\mA_2^{\mathcal{W}_2}) x^{\overline{a}} \prod_{a \in \mathcal{W}_2} y^a
	\sum_{\substack{\sigma \subseteq \mathcal{A}_{F_1} \\ \lvert \sigma \rvert = d_1}}
	(\det{\mA_1^{\{\overline{a}\} \mathbin\Vert \sigma }}) 
	(\det(\hat{\mA}_1)^{\sigma}) 
	\prod_{a \in \sigma} c_a x^a,
\end{align*}
where \(u \mathbin\Vert v\) stands for the concatenation of \(u\) and \(v\).
Expanding the second determinant along the \(\{\overline{a}\}\)-column, the above sum equals
\begin{align*}
	&\sum_{\ell = 0}^{d_1} (-1)^{\ell} (\mA_1)_{\ell+1,\overline{a}}
	\sum_{\substack{\sigma \subseteq \mathcal{A}_{F_1} \\ \lvert \sigma \rvert = d_1}} 
	\det{((\mA_1)_{\setminus \{\ell+1\}}^{\sigma }}) \cdot 
	\det((\hat{\mA}_1)^{\sigma}) \cdot
	\prod_{a \in \sigma} c_a x^a.
\end{align*}
Since the vector \((c_a x^a)_{a \in \mathcal{A}_{F_1}}\) is in the kernel of \(\hat{\mA}_1\), the inside sum is zero when \(\ell \neq 0 \) by Lemma~\ref{lem_sum_sigma} and equals
\[
    \sum_{\substack{\sigma \subseteq \mathcal{A}_{F_1} \\ \lvert \sigma \rvert = d_1}} 
	\left(\det(\hat{\mA}_1)^{\sigma}\right)^2
	\prod_{a \in \sigma} c_a x^a = P_{\A_{F_1}}(z)
\]
otherwise (using that \(\mB_1 \cdot z = \left(c_a x^a\right)_{a \in \mathcal{A}_{F_1}}\)).
Consequently,
\[
	\mathcal{D} = (\det{\mA_2}^{\mathcal{W}_2}) \left(x^{\overline{a}} \prod_{a \in \mathcal{W}_2} y^a\right) P_{\mathcal{A}_{F_1}}(z) \neq 0. \hfill \qedhere
\]
\end{proof}

\section{Proof of Claim~\ref{clm:PA}}\label{app:PA}\label{sec:proof_det_jacobian}

We prove the following claim which is used for proving Proposition~\ref{simple}. We keep notations introduced in Section~\ref{sec:App_prelim}.

\begin{clm*}[Restating of Claim~\ref{clm:PA}]
	For all \((y_1,\ldots,y_k) \in {{\mathscr{C}}_D^{\nu}}\), we have
	\[
	\det(J) = \gamma \left(\prod_{a \in \A} \langle B_a,y \rangle^{-1+\sum_{p=1}^{k-1}\lambda_{a,p}}\right) \cdot P_{\mA}(y) \cdot \left( \sum_{j=1}^k y_i^2\right) \cdot \left( \sum_{a \in \A} h_a \langle B_a, y\rangle \right)
	\]
	where \(\gamma\) is a non-zero real constant which does not depend on \(y\).
\end{clm*}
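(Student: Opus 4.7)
The plan is to prove the identity in four main stages: (i) logarithmic differentiation to factor out $\prod_i \phi_i$; (ii) Euler's homogeneity identity to extract the factor $\|y\|^2$; (iii) a single application of the Cauchy-Binet formula to re-express the remaining determinant as a sum over $(k-1)$-subsets of $\A$; and (iv) Gale duality combined with iterated Laplace expansions to collapse this sum into $P_\A(y) \cdot \sum_a h_a \langle B_a, y\rangle$ up to a constant depending only on the $c_a$.

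Setting $\rho_a = \langle B_a, y\rangle$, logarithmic differentiation of $\phi_i = \left(\prod_a c_a^{-\lambda_{a,i}}\right) \prod_a \rho_a^{\lambda_{a,i}}$ yields $\partial_{y_j}\phi_i = \phi_i \sum_a \lambda_{a,i} B_{a,j}/\rho_a$. Factoring $\phi_i$ out of row $i$ of $J$ for each $i\le k-1$ gives $\det J = \prod_i \phi_i \cdot \det M$, where $M_{i,j} = \sum_a \lambda_{a,i} B_{a,j}/\rho_a$ for $i<k$ and $M_{k,j}=y_j$. The prefactor $\prod_i\phi_i$ contributes the desired $\prod_a \rho_a^{\sum_p \lambda_{a,p}}$ together with a monomial in the $c_a$ that is absorbed into $\gamma$. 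Next, since each $\phi_i$ is homogeneous of degree zero, Euler's identity gives $\sum_j y_j\partial_j\phi_i = 0$, so $y$ lies in the right kernel of the top $k-1$ rows of $M$. The cofactor vector $C\in\R^k$ of the last row of $M$ (with $C_j = (-1)^{k+j}\det(M_{[1,k-1]}^{\setminus j})$) is orthogonal to each of those rows, and is therefore generically proportional to $y$. Writing $C = \mu y$ and expanding $\det M$ along the last row yields $\det M = \langle y,C\rangle = \mu\|y\|^2$.

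Applying Cauchy-Binet to $M_{[1,k-1]} = \lambda^T \Diag(1/\rho_a) B$ and multiplying through by $\prod_a \rho_a$ gives
\[
\mu\|y\|^2 \prod_a\rho_a \;=\; \sum_{|\sigma|=k-1}\det(\lambda_\sigma)\, D(\sigma)\prod_{a\notin\sigma}\rho_a, \qquad D(\sigma)=\det\left(\begin{smallmatrix}B_\sigma\\ y^T\end{smallmatrix}\right).
\]
The remaining task is to show this sum equals $\gamma' \|y\|^2 P_\A(y) \sum_a h_a\rho_a$. By Gale duality for $\lambda$, $\det(\lambda_\sigma) = \epsilon(\sigma) c_\lambda \det(\mAh^{\A\setminus\sigma})$; expanding $\det(\mAh^\tau)$ along its heights row introduces the desired $h_a$-factor, and expanding the resulting $(n+1)$-minor of $\mA$ along its row of ones produces $n$-minors of $\hat{\mA}$. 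In parallel, expanding $D(\sigma)$ along its last row and rewriting the $(k-1)\times(k-1)$ minors of $B$ via Gale duality for $B$ (as in Appendix~\ref{sec:proof_intersection_nablas}) produces another layer of $\hat{\mA}$-minors. Collecting the resulting triple sum around each $a\in\A$ factors out $\sum_a h_a\rho_a$ cleanly via $\prod_{b\in\tau}\rho_b = \rho_a\prod_{b\in\tau\setminus\{a\}}\rho_b$, and the cross-terms $\det(\hat{\mA}^\nu)\det(\hat{\mA}^{\nu'})$ with $\nu\neq\nu'$ cancel via Lemma~\ref{lem_sum_sigma} applied to the kernel relation $\mA\cdot B=0$. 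Only the diagonal squared minors survive, reconstituting $P_\A(y) = \sum_{|\nu|=n}\det(\hat{\mA}^\nu)^2\prod_{b\in\nu}\rho_b$.

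The main obstacle is precisely this final collapse: the two nested Laplace expansions of $\det(\mAh^\tau)$ together with the expansion of $D(\sigma)$ yield a sum indexed by $(a,\nu,\nu')$ with $\nu,\nu'$ ranging over $n$-subsets of $\A$, and one must verify, by careful sign-tracking through several Gale dualities and judicious use of Lemma~\ref{lem_sum_sigma}, that all cross-terms with $\nu\neq\nu'$ vanish so that only the diagonal part $\nu=\nu'$ survives. Combined with the $\|y\|^2$ already extracted in Step 2 and the prefactor from Step 1, this produces the constant $\gamma$ and the stated identity.
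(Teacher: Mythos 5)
Your overall strategy matches the paper's: factor out the $\phi_i$, expand $\det J$ along the last row, apply Cauchy--Binet, then Gale duality for $\lambda$ and $B$, Laplace expansions of $\mAh$, and finally Lemma~\ref{lem_sum_sigma} to collapse the sum to $P_\A(y)\cdot\sum_a h_a\langle B_a,y\rangle$. The right tools are all in place. However, the description of the two hardest technical steps is imprecise enough that the plan as written would not carry through.

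First, expanding $D(\sigma)$ along its last row produces $(k-1)\times(k-1)$ minors $\det(B_\sigma^{\setminus j})$, and you cannot "rewrite these via Gale duality for $B$": the duality $\mA B=0$ relates $k\times k$ minors of $B$ to $(n+1)\times(n+1)$ minors of $\mA$, not $(k-1)\times(k-1)$ minors. The paper bridges this gap with an additional Laplace identity: writing $\langle B_v,y\rangle=\rho_v$ and expanding the augmented determinant $\tilde\mB_{i,{}^c\sigma u}$ (where column $i$ of $\mB_{{}^c\sigma u}$ is replaced by $(\rho_a)_{a\in{}^c\sigma u}$) along that column yields $y_i|\mB_{{}^c\sigma u}|=\sum_v(\pm)|\mB^{\setminus i}_{\setminus\sigma u v}|\rho_v$. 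This lifts from $(k-1)\times(k-1)$ minors of $B$ to $k\times k$ ones, which are then Gale dual to $(n+1)\times(n+1)$ minors $|\mA^{\sigma u}|$. It is this same identity that, after summing over $i$, produces the factor $\sum_i y_i^2$; your cofactor-proportionality argument $C=\mu y$ is a valid sanity check that $\|y\|^2$ should appear, but it introduces an unidentified $\mu$ and does not replace the algebraic identity.

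Second, the final collapse is not a cancellation of cross-terms $\det(\hat\mA^\nu)\det(\hat\mA^{\nu'})$ with $\nu\neq\nu'$. In the paper's computation, after Gale duality and the Laplace trick above, one expands $|\mA^{u\Vert\sigma}|$ along column $u$, obtaining a sum indexed by a row index $t\in[1,n+1]$ of $\mA$. This yields $\sum_t(-1)^{k+t+1}\gamma_\lambda\gamma_B\bigl(\sum_u\mA_{t,u}h_u\rho_u\bigr)\bigl(\sum_\sigma|\mA^\sigma_{\setminus t}||\hat\mA^\sigma|\prod_{\ell\in\sigma}\rho_\ell\bigr)$. Here both minor factors $|\mA^\sigma_{\setminus t}|$ and $|\hat\mA^\sigma|$ are indexed by the \emph{same} column set $\sigma$; the "off-diagonal" terms are the ones with $t\geq 2$ (a non-ones row removed), and it is these that vanish by Lemma~\ref{lem_sum_sigma}, because $\mA_{\setminus t}$ still contains the row of ones and $(\rho_a)_a\in\ker\hat\mA$. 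The surviving term is $t=1$, where $\mA_{\setminus 1}=\hat\mA$ and $\mA_{1,u}=1$, giving $(\sum_u h_u\rho_u)\cdot\sum_\sigma(\det\hat\mA^\sigma)^2\prod\rho=(\sum_u h_u\rho_u)\cdot P_\A(y)$. Lemma~\ref{lem_sum_sigma} is a single Cauchy--Binet identity and does not directly kill a double sum over pairs $(\nu,\nu')$; the cancellation you would need to organize is over the row index $t$, not over mismatched column sets.
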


\begin{proof}
	We will apply several times Laplace expansion, so for \(I \subset [1,p]\) let us denote by \(\varepsilon(I)\) the value \(\sum_{i \in I} i +\sum_{i=1}^{\lvert I \rvert} i\).
	To be coherent with this previous convention, we use here an ordering of \(\A = \{a_1,\ldots,a_{n+k+1}\}\)  (starting the numbering by \(1\) and not \(0\) as in the remainder of the paper). Thus for \(I \subset \A\), we write \(\varepsilon(I)\) for \(\varepsilon(\{j \in [1,n+k+1] \mid a_j \in I\})\).
	
	Finally, if \(V \in M_{p+q,q}\) is a matrix Gale dual to a matrix \(U \in M_{p,p+q}\), then we define \(\gamma_U\) as the constant verifying that for all \(I \subset [1,p+q]\) with \(\lvert I \rvert = q\), \(\lvert V_I \rvert = (-1)^{\varepsilon(^cI)} \cdot \gamma_V \cdot \lvert U^{^cI} \rvert\).

	We expand the expression of \(\det({J})\) to get the right hand side of the claimed identity.	
	Since \(B_a = c_a D_a\), 
	\[
	\frac{\partial \phi_i}{\partial y_j} = \sum_{a \in \A} \frac{\lambda_{a,i} \mD_{a,j}}{\langle D_{a},y\rangle}
	\prod_{\ell \in \A} \langle D_\ell , y \rangle^{\lambda_{\ell,i}} = \sum_{a \in \A} \frac{\lambda_{a,i} \mB_{a,j}}{\langle B_{a},y\rangle}
	\prod_{\ell \in \A} \langle D_\ell , y \rangle^{\lambda_{\ell,i}}.
	\]
	So the matrix \(\phi = \left( \frac{\partial \phi_i}{\partial y_j} \right) \in M_{k-1,k}(\R) \) equals 
	\[
	\text{Diag}_i \left(\prod_{\ell \in \A} \langle D_\ell , y \rangle^{\lambda_{\ell,i}} \right) \cdot 
	\lambda^T \cdot \text{Diag}_a\left(1 /{\langle B_{a},y\rangle}\right) \cdot \mB.
	\]
	Let us introduce \(G \defeq 
	\left( \prod_{\ell \in \A}
	c_\ell^{1-\sum_{p=1}^{k-1}\lambda_{\ell,p}} \right) \left(\prod_{\ell \in \A} \langle B_\ell,y \rangle^{-1+\sum_{p=1}^{k-1}\lambda_{\ell,p}} \right)\).
	
	By expanding \(\det({J})\) along its last row and using Cauchy-Binet formula, we obtain
	\begin{align*}
		&  (\det{J}) / G \\
		= & \sum_{i=1}^k (-1)^{i+k} y_i \sum_{I, \lvert I\rvert = k-1} \lvert \lambda_I\rvert \cdot \lvert \mB_I^{\setminus i} \rvert \cdot \prod_{\ell \notin I} \langle B_{\ell}, y\rangle \\
		= & \sum_{i=1}^k \sum_{I, \lvert I\rvert = k-1} 
		(-1)^{i+k+\varepsilon(^cI)} y_i \gamma_{\lambda} \cdot 
		\lvert (\mAh)^{\setminus I}\rvert \cdot \lvert \mB_I^{\setminus i} \rvert \cdot \prod_{\ell \notin I} \langle B_{\ell}, y\rangle \\
		= & \sum_{i=1}^k \sum_{\sigma, \lvert \sigma\rvert = n} \sum_{\substack{u\neq v \in \A\setminus \sigma \\ \sigma u v \defeq \sigma \cup \{u\} \cup \{v\}}} 
		(-1)^{i+k+\varepsilon(\sigma u v)} (-1)^{n+\lvert \{j \in \sigma \mid j \text{ between }u\text{ and }v\} \rvert + \mathbb{1}_{u<v}} \\
		& \quad\quad\quad\quad\quad\quad
		\cdot y_i \gamma_{\lambda}  \cdot \lvert \hat{\mA}^{\sigma}\rvert \cdot h_u \cdot \lvert \mB_{\setminus \sigma u v}^{\setminus i} \rvert \cdot \prod_{\ell \in \sigma u v} \langle B_{\ell}, y\rangle.
	\end{align*}
	The last equality follows from Laplace expansion applied to the first and the last row of \(\mAh\). Let us consider \(\tilde{\mB}_{i,^c \sigma u}\) being the matrix \(\mB_{^c \sigma u}\) where the column \(i\) is replaced by the column \(\mB_{^c \sigma u} \cdot y^T\) (denoting \(\sigma\cup\{u\}\) by \(\sigma u\)). Laplace expansion along the \(i^{\text{th}}\) column also gives 
	\begin{align*}
		y_i \lvert \mB_{^c\sigma u} \rvert & = \lvert \tilde{\mB}_{i,^c\sigma u} \rvert 
		= \sum_{v \notin \sigma u} (-1)^{i+1+\varepsilon(\{v\}) +\mathbb{1}_{u<v}
		+\lvert \{j \in \sigma \mid j <v\}\rvert } \lvert \mB^{\setminus i}_{\setminus \sigma u v}\rvert \cdot \langle B_{v},y\rangle.
	\end{align*}
	Consequently, denoting by \(\mA^{u \mathbin\Vert \sigma}\) the matrix obtained by concatenating the columns of \(\mA^u\) with the columns of \(\mA^{\sigma}\)
	\begin{align*}
		& ( \det{J}) / (G \cdot \sum_{i=1}^k y_i^2) \\
		& = \sum_{\sigma, \lvert \sigma\rvert = n} \sum_{u \notin \sigma} 
		(-1)^{k+\varepsilon(\sigma u)+\lvert \{j \in \sigma \mid  j<u\} \rvert} \cdot \gamma_{\lambda}  \cdot \lvert \hat{\mA}^{\sigma}\rvert \cdot h_u \cdot \lvert \mB_{^c\sigma u} \rvert \cdot \prod_{\ell \in \sigma u} \langle B_{\ell}, y\rangle  
		\\
		& = \sum_{\sigma, \lvert \sigma\rvert = n} \sum_{u \notin \sigma} 
		(-1)^{k+\lvert \{j \in \sigma \mid  j<u\} \rvert} \cdot \gamma_{\lambda}\gamma_B  \cdot \lvert \hat{\mA}^{\sigma}\rvert \cdot h_u \cdot \lvert \mA^{\sigma u} \rvert \cdot \prod_{\ell \in \sigma u} \langle B_{\ell}, y\rangle  
		\\
		& = \sum_{\sigma, \lvert \sigma\rvert = n} \sum_{u \in \A} 
		(-1)^{k} \cdot \gamma_{\lambda}\gamma_B  \cdot \lvert \hat{\mA}^{\sigma}\rvert \cdot h_u \cdot \lvert \mA^{u \mathbin\Vert \sigma} \rvert \cdot \prod_{\ell \in \sigma u} \langle B_{\ell}, y\rangle  
		\\
		& = \sum_{t=1}^{n+1} 
		(-1)^{k+t+1}\gamma_{\lambda}\gamma_B
		\left(
		\sum_{u \in \A} 
		    \mA_{t,u} h_u \langle B_u,y \rangle
		\right)
		\left(
		\sum_{\sigma, \lvert \sigma\rvert = n} 
		  \lvert \mA^{\sigma}_{\setminus t} \rvert \cdot \lvert \hat{\mA}^{\sigma}\rvert \cdot 
		  \prod_{\ell \in \sigma} \langle B_{\ell}, y\rangle 
		\right).
	\end{align*}
	    
	The condition \(u \notin \sigma\) is removed at the third equality since \(\lvert \mA^{u \mathbin\Vert \sigma} \rvert =0\) when \(u \in \sigma\). The last equality is obtained by using Laplace expansion along the column \(u\) of the matrix \(\mA^{u \mathbin\Vert \sigma}\).
	
	By Lemma~\ref{lem_sum_sigma}, the quantity in the second parenthesis is zero if \(t \ge 2\). So
	\begin{align*}
		& ( \det{J}) / (G \cdot \sum_{i=1}^k y_i^2) 
		\\
		& =  
		(-1)^{k}\gamma_{\lambda}\gamma_B
		\left(
		\sum_{u \in \A} 
		    h_u \cdot \langle B_u,y \rangle
		\right)
		\left(
		\sum_{\sigma, \lvert \sigma\rvert = n} 
		  \lvert \hat{\mA}^{\sigma}\rvert^2 \cdot 
		  \prod_{\ell \in \sigma} \langle B_{\ell}, y\rangle 
		\right)
		\\
		& =  
		(-1)^{k}\gamma_{\lambda}\gamma_B
		\left(
		\sum_{u \in \A} 
		    h_u \cdot \langle B_u,y \rangle
		\right)
		P_{\A}(y)
		.
	\end{align*}

	That is the claimed identity.
\end{proof}

\end{document}